\def\E{{\mathbb E}}
\def\Pr{{\mathbb{P}}}
\def\Re{\mathbb{R}}
\def\Qe{\mathbb{Q}}
\def\I{\mathbb{I}}
\def\hat{\widehat}
\def \bxi{\bm{\xi}}
\def \bzeta{\tilde{\bm{\zeta}}}
\def \bfzeta{\bm{\zeta}}
\def \P{\mathcal{P}}
\def \Ze{{\mathbb{Z}}}
\def \Z{{\mathcal{Z}}}
\def\P{{\mathcal P}}
\def\W{{\bm W}}
\def\X{{\mathcal X}}
\def\C{{\mathcal C}}
\def\Re{{\mathbb R}}
\newcommand{\exclude}[1]{}
\newcommand{\rxi}{\bm{\xi}}
\newcommand{\bfx}{\bm{x}}  
\newcommand{\bfy}{\bm{y}}
\newcommand{\e}{\bm{e}}
\newcommand{\q}{\bm{q}}
\newcommand{\T}{\bm{T}}
\def\CVaR{{\bf{CVaR}}}
\newcommand{\zsetd}{Z}
\DeclareMathOperator{\conv}{conv}
\newcommand*{\qedA}{\hfill\ensuremath{\square}}
\declaretheorem[name=Theorem]{theorem}
\declaretheorem[name=Proposition]{proposition}
\declaretheorem[name=Lemma]{lemma}
\declaretheorem[name=Example]{example}
\declaretheorem[name=Property]{property}
\newcommand*{\QEDA}{\hfill\ensuremath{\square}}
\begin{document}
\RUNAUTHOR{Weijun Xie}
\RUNTITLE{Tractable Reformulations of DRTSP with $\infty-$Wasserstein Distance
}

\TITLE{Tractable Reformulations of Distributionally Robust Two-stage Stochastic Programs with $\infty-$Wasserstein Distance}

\ARTICLEAUTHORS{%
\AUTHOR{Weijun Xie}
\AFF{Department of Industrial \& Systems Engineering, Virginia Tech, Blacksburg, VA 24061, \EMAIL{wxie@vt.edu.}}

}


\ABSTRACT{
In the optimization under uncertainty, decision-makers first select a wait-and-see policy before any realization of uncertainty and then place a here-and-now decision after the uncertainty has been observed. Two-stage stochastic programming is a popular modeling paradigm for the optimization under uncertainty that the decision-makers first specifies a probability distribution, and then seek the best decisions to jointly optimize the deterministic wait-and-see and expected here-and-now costs. In practice, such a probability distribution may not be fully available but is probably observable through an empirical dataset. Therefore, this paper studies distributionally robust two-stage stochastic program (DRTSP) which jointly optimizes the deterministic wait-and-see and worst-case expected here-and-now costs, and the probability distribution comes from a family of distributions which are centered at the empirical distribution using $\infty-$Wasserstein metric. There have been successful developments on deriving tractable approximations of the worst-case expected here-and-now cost in DRTSP. Unfortunately, limited results on exact tractable reformulations of DRTSP. This paper fills this gap by providing sufficient conditions under which the worst-case expected here-and-now cost in DRTSP can be efficiently computed via a tractable convex program. By exploring the properties of binary variables, the developed reformulation techniques are extended to DRTSP with binary random parameters. The main tractable reformulations in this paper are projected into the original decision space and thus can be interpreted as conventional two-stage stochastic programs under discrete support with extra penalty terms enforcing the robustness. These tractable results are further demonstrated to be sharp through complexity analysis.
 }

\KEYWORDS{Distributionally Robust, Two-stage, Stochastic Program,  Tractable, Reformulation.} \HISTORY{}

\maketitle

\newpage

\newpage
\section{Introduction}
\subsection{Setting}
Consider the distributionally robust two-stage stochastic program (DRTSP) of the form \citep{hanasusanto2016conic}:
\begin{subequations}\label{sp}
\begin{align}
v^*= \min_{\bm{x}} & \ \ \bm{c}^{\top}\bm{x}+\Z(\bm{x}), \label{sp-obj} \\
\rm{s.t. } & \ \ \bm{x} \in \X, \label{sp-det}\\
& \ \ \Z(\bm{x})=\sup_{\Pr\in \P}\E_{\Pr}[Z(\bm{x},\tilde{\bm{\xi}})].\label{sp-recourse}
\end{align}
\end{subequations}
Above, set $\X\subseteq \Re^n$ denotes the feasible region of the here-and-now decisions $\bm{x}$, the vector $\bm c \in \Re^{n_1}$ denotes the here-and-now objective coefficients, and the function $\Z(\bm{x})$ denotes the worst-case expected wait-and-see cost $Z(\bm{x},\tilde{\bm{\xi}})$ (also known as, recourse function) specified by random parameters $\tilde{\bm{\xi}}\in \Xi$, where its probability distribution $\Pr$ comes from a family of distributions, denoted by ambiguity set $\P$. 

Following the notation in \cite{ahmed2010two,bertsimas2010models,Shapiro2009lectures}, given a realization $\bm{\xi}$ of $\tilde{\bm{\xi}}$, we consider the following recourse function:
\begin{subequations}\label{recourse}
\begin{align}
Z(\bm{x},\bm{\xi})=\min_{\bm{y}} & \ \ (\bm{Q}{\bm{\xi}}_q+\bm{q})^\top \bm{y}, \label{sp-obj_2nd} \\
\rm{s.t. } & \ \ \bm{T}(\bm{x}){\bm{\xi}}_T+\bm{W} \bm {y}\geq \bm{h}(\bm{x}), \label{sp-det_2nd}\\
& \ \ \bm{y}\in \Re^{n_2},\label{sp-bound_2nd}
\end{align}
\end{subequations}
where $\bm{y}$ represents the second-stage wait-and-see decisions, $\bm\xi=(\bm{\xi}_q, \bm{\xi}_T)\in \Re^{m_1}\times \Re^{m_2}$, $\bm{q}\in \Re^{n_2}$ and there are two affine mappings- right-hand mapping $\bm{h}: \Re^{n_1}\rightarrow \Re^\ell$ and  technology mapping $\bm{T}: \Re^{\ell\times m_2}\rightarrow \Re^{\ell}$. Similar to many two-stage stochastic program \citep{bertsimas2010models,Shapiro2009lectures}, throughout this paper, we assume that 
\begin{itemize}
\item (Fixed Recourse) The recourse matrix $\bm{W}\in \Re^{\ell\times n_2}$ is \textit{fixed}; and
\item (Separable Uncertainty) The support $\Xi=\Xi_q\times \Xi_T$, where $\Xi_q\subseteq \Re^{\ell},\Xi_T\subseteq \Re^{n_2}$.
\end{itemize}
Both assumptions are quite standard and have appeared in many stochastic programming applications, for example, power systems \citep{dai2013trading,golari2014two}, logistics and supply chain \citep{kara2010stochastic,lu2015reliable}, inventory and production \citep{hu2016two,zhang2018multi}, agriculture \citep{li2013inexact}, and many others.

In this paper, we consider $\infty-$Wasserstein ambiguity set $\P$, which is defined as
	\begin{align}\label{eq_general_das}
\P=\left\{\Pr:\Pr\left\{\tilde{\bxi}\in \Xi\right\}=1,W^\infty\left(\Pr,\Pr_{\bzeta}\right)\leq \theta\right\},
\end{align}
where $\infty-$Wasserstein distance \citep{bertsimas2018data,givens1984class} is defined as
\[W^\infty\left(\Pr_1,\Pr_{2}\right)=\inf_{\Qe}\left\{\text{ess.sup}\|{\bm{\xi}}_1-{\bm{\xi}}_2\|_p\Qe(d\bm{\xi}_1,d\bm{\xi}_2):\begin{array}{l}\text{$\Qe$ is a joint distribution of $\tilde{\bm{\xi}}_1$ and $\tilde{\bm{\xi}}_2$}\\
\text{with marginals $\Pr_1$ and $\Pr_2$, respectively}\end{array}\right\},\]
\text{ess.sup}$(\cdot)$ denotes essential supremum (see \citealt{rudin1964principles}), norm $\|\cdot\|_p$ denotes reference distance with $p\in [1,\infty]$ and $\Pr_{\bzeta}$ denotes a discrete empirical distribution of $\bzeta$ generated by i.i.d. samples $\Z=\{\bm{\zeta}^j:=(\bm{\zeta}_q^j,\bm{\zeta}_T^j)\}_{j\in [N]}\subseteq \Xi$ from the true distribution $\Pr^{\infty}$, i.e., its point mass function is $\Pr_{\bzeta}\left\{\bzeta=\bm{\zeta}^j\right\}=\frac{1}{N}$, $\theta\geq0$ denotes the Wasserstein radius, and $p\geq 1$. Many recent works also studied $\tau-$Wasserstein ambiguity set with $\tau\in [1,\infty)$, where in \eqref{eq_general_das}, we replace the $\infty-$Wasserstein distance by the following $\tau-$Wasserstein distance
\[W^\tau\left(\Pr_1,\Pr_{2}\right)=\inf_{\Qe}\left\{\sqrt[\tau]{\int_{\Xi\times\Xi}\|{\bm{\xi}}_1-{\bm{\xi}}_2\|_p^\tau\Qe(d\bm{\xi}_1,d\bm{\xi}_2)}:\begin{array}{l}\text{$\Qe$ is a joint distribution of $\tilde{\bm{\xi}}_1$ and $\tilde{\bm{\xi}}_2$}\\
\text{with marginals $\Pr_1$ and $\Pr_2$, respectively}\end{array}\right\}.\]
Clearly, according to \cite{givens1984class}, $\tau-$Wasserstein distance converges to $\infty-$Wasserstein distance as $\tau\rightarrow \infty$. Different types of Wasserstein ambiguity set might provide different tractable results. The results of this paper reveal that $\infty-$Wasserstein ambiguity set indeed delivers more tractable results for DRTSP and according to \cite{bertsimas2018data}, it still exhibits attractive convergent properties.

The discussions on advantages of Wasserstein ambiguity sets can be found in \cite{esfahani2015data,gao2016distributionally,bertsimas2018data}, which are briefly summarized below: (i) \textbf{Data-Driven}. When the number of observed empirical data points grows, the Wasserstein radius shrinks under mild conditions, and thus, the corresponding DRTSP \eqref{sp} eventually converges to the true two-stage stochastic programming as $N\rightarrow \infty$; (ii) \textbf{Finite}. It has been shown in \cite{esfahani2015data,gao2016distributionally,blanchet2016quantifying} that as long as the number of empirical data points is finite, the worst-case probability distribution of the corresponding DRTSP \eqref{sp} is also finitely supported; and (iii) \textbf{Tractability}. There have been many successful developments on tractable reformulations of distributionally robust optimization with Wasserstein ambiguity set, see, for example, \cite{esfahani2015data,gao2016distributionally,blanchet2016quantifying,blanchet2016robust,gao2017wasserstein,chen2019sharing}. However, for DRTSP, the tractable results are quite limited. Therefore, this paper focuses on developing tractable representations of DRTSP under $\infty-$Wasserstein ambiguity set $\P$, in particular, the tractable representations of the worst-case expected wait-and-see cost (i.e., the function $\Z(\bm{x})$).

\subsection{Related Literature}

Distributionally robust optimization (DRO) has been an alternative modeling paradigm for optimization under uncertainty, where the probability distributions of random parameters are not fully known. Interested readers are referred to \cite{rahimian2019DROreview} for a complete literature review of DRO. Recently, there are several interesting works on exact tractable reformulations of the function $\Z(\bfx)$ under three types of ambiguity sets, namely, under moment ambiguity set, phi-divergence based ambiguity set, and Wasserstein ambiguity set. 
\begin{enumerate}[label=(\roman*),wide = 0pt, itemsep=1.5ex]
\item Moment ambiguity set is specified by the acquired knowledge of some moments (e.g., known first two moments), and has been successfully applied to many different settings (see for example, \citealt{delage2010distributionally,bertsimas2010models,goh2010distributionally,bertsimas2018adaptive,wiesemann2014distributionally,hanasusanto2015distributionally,hanasusanto2015Ambiguous,Natarajan2017,li2016ambiguous,xie2016opf,Xie2016drccp,zhang2016drccbp}). \cite{delage2010distributionally} shows that if the first two moments are known or bounded from above, and the recourse function can be expressed as piecewise maximum of a finite number of functions which are convex in $\bm{x}$ and concave in the random parameters $\tilde{\bxi}$, then the function $\Z(\bfx)$ have a tractable representation. In \cite{bertsimas2010models}, the authors showed that if first two moments are known, then the function $\Z(\bfx)$ with only objective uncertainty (i.e., $\tilde{\bxi}_T$ is deterministic) can be formulated as a tractable semidefinite program (SDP). \cite{Natarajan2017} further showed that if first two moments are known, then the function $\Z(\bfx)$ with objective uncertainty and any known support can be reformulated as an SDP, where the positive semidefinite matrix comes from a convex hull of rank-one matrices, and, although computationally intractable in general, the authors were able to establish sufficient conditions under which this SDP formulation becomes tractable.
\item Phi-divergence based ambiguity set is specified by the bounded distance between a nominal distribution and true distribution via phi-divergence \citep{bayraksan2015data,ben2013robust,hu2012kullback,jiang2013data,jiang2018risk}. In particular, \cite{jiang2018risk} showed that for DRTSP with phi-divergence based ambiguity set can be equivalently reformulated as a convex combination of conditional-value-at-risk and worst-case risk costs, where the tractability follows when both risk measures are tractable.

\item Wasserstein ambiguity set is specified by the bounded distance between a nominal distribution and true distribution via Wasserstein metric \citep{esfahani2015data,blanchet2016quantifying,blanchet2016robust,chen2018data,chen2019robust,chen2019sharing,gao2016distributionally,gao2017wasserstein,hanasusanto2016conic,bertsimas2018data,luo2017decomposition,xie2018drccp,Xie2018approx,zhao2015data_a}. \cite{esfahani2015data} showed that for DRTSP under $1-$Wasserstein ambiguity set, if the recourse function can be expressed as piecewise maximum of a finite number of functions which are bi-affine in the decision variables $\bm{x}$ and the random parameters $\tilde{\bxi}$, then the function $\Z(\bfx)$ has a tractable representation. 
\cite{hanasusanto2016conic} extended the tractable results into DRTSP with constraint uncertainty (i.e., $\tilde{\bxi}_q$ is deterministic) and $1-$Wasserstein ambiguity set, where the reference distance $\|\cdot\|_1$ and support $\Xi_T=\Re^{m_2}$, and proved that for general DRTSP under Wasserstein ambiguity set, it is in general NP-hard to evaluate the function $\Z(\bfx)$. Thereby, the authors proposed a hierarchy of SDP representations to approximate the function $\Z(\bfx)$ under $2-$Wasserstein ambiguity set. 

Different from  \cite{hanasusanto2016conic}, this paper focuses on $\infty-$Wasserstein ambiguity set, providing sufficient conditions under which the function $\Z(\bfx)$ can be tractable, even with both objective and constraint uncertainties, and further extending the tractable results to the cases where part of random parameters are binary. As far as the author is concerned, only two works studied $\infty-$Wasserstein ambiguity set, i.e., \cite{bertsimas2018data,bertsimas2019twostage}. \cite{bertsimas2018data} provided fundamental convergence analysis of $\infty-$Wasserstein ambiguity set, and studied adaptive approximation schemes for the data-driven multi-stage linear program, while \cite{bertsimas2019twostage} studied robust two-stage sampling problem with constraint uncertainty and proved that under certain conditions, the proposed multi-policy approximation scheme is asymptotically optimal. Different from these two works, this paper studies DRTSP by exploring exact tractable reformulations of the function $\Z(\bfx)$ with $\infty-$Wasserstein ambiguity set and providing the complexity analysis to demonstrate the sharpness of the tractable results.
\end{enumerate}

\exclude{For a single DRCCP, when $\P$ consists of all probability distributions with given first and second moments, the set $\zsetd$ is second-order conic representable \cite{calafiore2006distributionally,el2003worst}. Similar convexity results hold for single DRCCP when $\P$ also incorporates other distributional information such as the support of $\tilde{\rxi}$ \cite{cheng2014distributionally}, the unimodality of $\Pr$ \cite{hanasusanto2015distributionally,li2016ambiguous}, or arbitrary convex mapping of $\tilde{\rxi}$ \cite{Xie2016drccp}. 
For a joint DRCCP, \cite{hanasusanto2015Ambiguous} provided the first convex reformulation of $\zsetd$ in the absence of coefficient uncertainty, i.e., $\eta_1 = 0$, when $\P$ is characterized by the mean, a positively homogeneous dispersion measure, and conic support of $\tilde{\rxi}$. For the more general coefficient uncertainty setting, \cite{Xie2016drccp} identified several sufficient conditions for $\zsetd$ to be convex (e.g., when $\P$ is specified by one-moment constraint), and \cite{xie2016opf} showed that $\zsetd$ is convex for two-sided DRCCP when $\P$ is characterized by the first two moments.

When DRCCP set $Z$ is not convex, many inner convex approximations have been proposed.
In \cite{chen2010cvar}, the authors proposed to aggregate the multiple uncertain constraints with positive scalars in to a single constraint, and then use conditional value-at-risk ($\CVaR$) approximation scheme \cite{nemirovski2006convex} to develop an inner approximation of $\zsetd$. This approximation is shown to be exact for single DRCCP when $\P$ is specified by first and second moments in \cite{zymler2013distributionally} or, more generally, by convex moment constraints in~\cite{Xie2016drccp}. In \cite{xie2017optimized}, the authors provided several sufficient conditions under which the well-known Bonferroni approximation of joint DRCCP is exact and yields a convex reformulation.

Recently, there are many successful developments on data-driven distributionally robust programs with Wasserstein ambiguity set \eqref{eq_general_das}. When the ambiguity set is specified  by moments (i.e., moment ambiguity set), \cite{delage2010distributionally} shows that if the first two moments are known or bounded from above, and the recourse function can be expressed as piecewise maximum of a special class of functions which are convex in $\bm{x}$ and concave in the random parameters $\tilde{\bxi}$, then DRT

 \cite{gao2016distributionally,esfahani2015data,zhao2015data_a}. For instance, \cite{gao2016distributionally,esfahani2015data} studied its reformulation under different settings. Later on, \cite{blanchet2016robust,gao2017wasserstein,lee2017minimax,shafieezadeh2015distributionally} applied it to the optimization problems related with machine learning. Other relevant works can be found \cite{blanchet2018distributionally,hanasusanto2016conic,kiesel2016wasserstein,luo2017decomposition}. However, there is very limited literature on DRCCP with Wasserstein ambiguity set.  In \cite{Xie2018approx}, the authors proved that it is strongly NP-hard to optimize over the DRCCP set $Z$ with Wasserstein ambiguity set and proposed a bicriteria approximation for a class of DRCCP with covering uncertain constraints (i.e., $S$ is a closed convex cone and $\Xi_i\in \Re_-^n,\bm{B}_i\in \Re_+^n, b_i\in \Re_{-}$ for each $i\in [I]$). In \cite{duan2018distributionally}, the authors considered two-sided DRCCP with right-hand uncertainty and proposed its tractable reformulation, while in \cite{hota2018data}, the authors studied CVaR approximation of DRCCP. As far as the author is concerned, there is no work on developing tight approximations and exact reformulations of general DRCCP with Wasserstein ambiguity set.}

\subsection{Contributions}
This paper studies exact reformulations of the worst-case expected wait-and-see cost (i.e., function $\Z(\bfx)$) in distributionally robust two-stage stochastic program (DRTSP) under $\infty-$Wasserstein ambiguity set. The main contributions are highlighted as below.
\begin{enumerate}[label=(\roman*)]
\item When random parameters $(\tilde{\bm{\xi}}_q,\tilde{\bm{\xi}}_T )$ are continuous, we derive exact tractable reformulations for the function $\Z(\bfx)$ with uncertainties in both objective function and constraint system, with objective uncertainty only, as well as with constraint uncertainty only. We prove that our tractable results are sharp.

\item When either of random parameters $(\tilde{\bm{\xi}}_q,\tilde{\bm{\xi}}_T)$ are binary, by exploring the binary variables in the reformulation, we are able to derive exact tractable reformulations for the function $\Z(\bfx)$ under sufficient conditions. Our complexity results show that the tractable results are sharp.

\item The main tractable reformulations in this paper are projected to the original decision space, and thus have straightforward interpretations of robustness.


\item We demonstrate that if the conditions provided in above results do not hold, then the proposed reformulations become tractable upper bound and will become exact if the Wasserstein radius goes to zero, i.e., they are asymptotically optimal.
\end{enumerate}

The remainder of the paper is organized as follows. Section \ref{sec_prelim} introduces the preliminary results that will be used throughout the rest of this paper. \Cref{sec_cont} presents exact tractable {reformulations} of DRTSP with continuous random parameters. \Cref{sec_bin} extends the results for DRTSP with binary random parameters. The main results and recommendations are summarized in \Cref{sec_rec} and \Cref{sec_sep_numerical} numerically illustrates the proposed formulations. Section~\ref{sec_conclusion} concludes the paper.\\

\noindent {\em Notation:} The following notation is used throughout the paper. We use bold-letters (e.g., $\bm{x},\bm{A}$) to denote vectors or matrices, and use corresponding non-bold letters to denote their components. We let $\e$ be the all-one vector or matrix whenever necessary, let $\bm{0}$ be the all-zero vector or matrix whenever necessary, and we let $\e_i$ be the $i$th standard basis vector. 
Given an integer $n$, we let $[n]:=\{1,2,\ldots,n\}$, and use $\Re_+^n:=\{\bm{x}\in \Re^n:x_l\geq0, \forall l\in [n]\}$ and $\Re_-^n:=\{\bm{x}\in \Re^n:x_l\leq0, \forall l\in [n]\}$. Given a real number $t$, we let $(t)_+:=\max\{t,0\}$. Given a finite set $I$, we let $|I|$ denote its cardinality. We let $\tilde{\bm{\xi}}$ denote a random vector with support $\Xi$ and denote one of its realization by $\bm{\xi}$. Given a real-valued random variable $\tilde{\xi}:\Omega\rightarrow \Re$ with probability distribution $\Pr$, its \text{ess.sup}$(X):=\inf\{c: \Pr\{\omega:\tilde{\xi}(\omega)>c\}=1\}$. Given a set $R$, the characteristic function $\chi_{R}(\bm{x})=0$ if $\bm{x}\in R$, and $\infty$, otherwise, while the indicator function $\I(\bm{x}\in R)$ =1 if $\bm{x}\in R$, and 0, otherwise. 
We let $\bm{I}_n$ denote $n\times n$ identify matrix.
For a vector $\bm{a}$, we let $|\bm{a}|$ denote the result by taking element-wise absolute and let $(\bm{a})_+=\max\{\bm{a},0\}$ by taking element-wise maximum. 
For a matrix $\bm{A}$, we let $|\bm{A}|$ denote the result by taking element-wise absolute, let $(\bm{A})_+=\max\{\bm{A},0\}$ by taking element-wise maximum, and let $\|\bm{A}\|_p$ denote its element-wise $p$-norm. 
Additional notation will be introduced as needed. 

\section{Preliminaries}\label{sec_prelim}

Similar to \cite{hanasusanto2016conic}, we will make the following assumption throughout this paper.
\begin{itemize}
\item (Sufficiently Expensive Recourse) For any $\bm{x}\in \X$, the dual of the second-stage problem \eqref{recourse} is feasible for all $\bxi\in \Xi$. 
\end{itemize}
Note that this assumption is used to ensure that the strong duality of the second-stage problem \eqref{recourse} always holds. If this assumption does not hold, then the proposed reformulations in this paper might not be exact.

According to the strong duality of distributionally robust optimization with $\infty-$Wasserstein ambiguity set \citep{bertsimas2018data}, we observe that the function $\Z(\bm{x})$ can be equivalently represented as the following bilinear program.
\begin{lemma}\label{lem_equivalent}
the function $\Z(\bm{x})$ is equivalent to 
\begin{align}
	&\Z(\bm{x})= \frac{1}{N}\sum_{j\in [N]}\sup_{(\bxi_q,\bxi_T)\in \Xi,\bm{\pi}\in \Re^{\ell}_+}\left\{(\bm{h}(\bm{x})-\bm{T}(\bfx)\bxi_T)^\top\bm{\pi}:\|(\bxi_q,\bxi_T)-(\bfzeta_q^j,\bfzeta_T^j)\|_p\leq \theta,\bm{W}^\top \bm{\pi}= \bm{Q}\bxi_q+\bm{q}\right\}.\label{sp_svm2}
	\end{align} 
\end{lemma}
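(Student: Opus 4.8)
The plan is to combine two facts: the reduction of a worst-case expectation over an $\infty$-Wasserstein ball to a finite family of pointwise suprema, and linear-programming duality applied to the recourse function~\eqref{recourse}.

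First I would exploit the structure of the $\infty$-Wasserstein distance. Any $\Pr\in\P$ admits a joint distribution $\Qe$ of $(\tilde{\bm{\xi}},\bzeta)$ with marginals $\Pr$ and $\Pr_{\bzeta}$ for which $\|\tilde{\bm{\xi}}-\bzeta\|_p\leq\theta$ holds $\Qe$-almost surely. Since $\Pr_{\bzeta}$ puts mass $1/N$ on each sample $\bm{\zeta}^j$, disintegrating $\Qe$ over its second marginal writes $\Pr=\frac{1}{N}\sum_{j\in[N]}\Pr_j$ where each conditional $\Pr_j$ is supported on $\Xi\cap\{\bm{\xi}:\|\bm{\xi}-\bm{\zeta}^j\|_p\leq\theta\}$; conversely, every such mixture belongs to $\P$. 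Hence $\E_{\Pr}[Z(\bm{x},\tilde{\bm{\xi}})]=\frac{1}{N}\sum_{j\in[N]}\E_{\Pr_j}[Z(\bm{x},\tilde{\bm{\xi}})]$, and maximizing each term independently---approaching the supremum by Dirac measures placed at near-maximizers, which are feasible because $\bm{\zeta}^j\in\Xi$ keeps the relevant ball nonempty---gives the identity
\[
\Z(\bm{x})=\frac{1}{N}\sum_{j\in[N]}\ \sup\Big\{Z(\bm{x},\bm{\xi}):\bm{\xi}\in\Xi,\ \|\bm{\xi}-\bm{\zeta}^j\|_p\leq\theta\Big\}.
\]
This is precisely the strong-duality statement for $\infty$-Wasserstein distributionally robust optimization established in~\citep{bertsimas2018data}, which I would cite rather than reprove.

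Second I would dualize the inner minimization in~\eqref{recourse}. For fixed $\bm{x}\in\X$ and $\bm{\xi}=(\bm{\xi}_q,\bm{\xi}_T)$, the recourse problem is a linear program over the free variable $\bm{y}\in\Re^{n_2}$ with cost $(\bm{Q}\bm{\xi}_q+\bm{q})^\top\bm{y}$ and constraints $\bm{W}\bm{y}\geq\bm{h}(\bm{x})-\bm{T}(\bm{x})\bm{\xi}_T$, whose linear-programming dual is $\max\{(\bm{h}(\bm{x})-\bm{T}(\bm{x})\bm{\xi}_T)^\top\bm{\pi}:\bm{W}^\top\bm{\pi}=\bm{Q}\bm{\xi}_q+\bm{q},\ \bm{\pi}\in\Re^{\ell}_+\}$. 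The Sufficiently Expensive Recourse assumption makes this dual feasible for all $\bm{\xi}\in\Xi$, so strong LP duality yields $Z(\bm{x},\bm{\xi})=\max\{(\bm{h}(\bm{x})-\bm{T}(\bm{x})\bm{\xi}_T)^\top\bm{\pi}:\bm{W}^\top\bm{\pi}=\bm{Q}\bm{\xi}_q+\bm{q},\ \bm{\pi}\in\Re^{\ell}_+\}$, with both sides equal to $+\infty$ exactly when the primal is infeasible (i.e.\ the dual is unbounded). Substituting this expression into the identity above and merging the outer supremum over $\bm{\xi}$ with the inner maximization over $\bm{\pi}$ into a single joint supremum over $(\bm{\xi}_q,\bm{\xi}_T,\bm{\pi})$ gives exactly~\eqref{sp_svm2}; the resulting objective is bilinear in $(\bm{\xi}_T,\bm{\pi})$.

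The only nonroutine step is the first one: justifying the disintegration of the $\infty$-Wasserstein coupling and the exchange of ``supremum over distributions'' with ``sum of pointwise suprema.'' This needs measurability of $\bm{\xi}\mapsto Z(\bm{x},\bm{\xi})$, care with the essential-supremum formulation of $W^\infty$, and a treatment of the case where $Z(\bm{x},\cdot)$ is unbounded on some ball; all of this is contained in~\citep{bertsimas2018data}, so I would invoke that result directly. Everything afterward is standard LP duality together with the bookkeeping of collapsing nested optimization problems into one bilinear program.
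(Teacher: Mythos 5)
Your proposal matches the paper's proof essentially step for step: it invokes the same result of \cite{bertsimas2018data} (Theorem 5 there) to decompose the worst-case expectation into pointwise suprema over the $N$ Wasserstein balls, then applies strong LP duality to the recourse problem under the Sufficiently Expensive Recourse assumption and merges the nested optimizations into the joint bilinear supremum \eqref{sp_svm2}. The extra detail you give on disintegrating the coupling is a correct elaboration of the cited result rather than a different route.
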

\begin{subequations}
\begin{proof} According to Theorem 5 in \cite{bertsimas2018data}, $\Z(\bm{x})=\sup_{\Pr\in \P}\E_{\Pr}[Z(\bm{x},\tilde{\bxi})]$ is equivalent to
	\begin{align}
	&\Z(\bm{x})= \frac{1}{N}\sum_{j\in [N]}\sup_{\rxi}\left\{Z(\bm{x},{\bm{\xi}}):\rxi\in \Xi, \|\rxi-\bm{\zeta}^j\|_p\leq \theta\right\}.\label{sp_svm1}
	\end{align}
Suppose $\bm{\pi}$ is the dual vector associated with constraints \eqref{sp-det_2nd}, then we can equivalently  represent $Z(\bm{x},{\bm{\xi}})$ as
	\begin{align}
Z(\bm{x},\bm{\xi})=\max_{\bm{\pi}}\left\{(\bm{h}(\bm{x})-\bm{T}(\bfx)\bxi_T)^\top\bm{\pi}:\bm{W}^\top \bm{\pi}= \bm{Q}\bxi_q+\bm{q}, \bm{\pi}\in \Re^{\ell}_+\right\}.\label{sp-obj_recourse_dual}
\end{align}
Substituting \eqref{sp-obj_recourse_dual} into \eqref{sp_svm1} and using the fact that $\bxi=(\bxi_q,\bxi_T)$ and $\bm\zeta^j=(\bfzeta_q^j,\bfzeta_T^j)$, we arrive at \eqref{sp_svm2}.  

\qedA
\end{proof}
\end{subequations}
Note that the inner supremum of \eqref{sp_svm2} is to maximize bilinear objective function over convex constraints, which is often difficult to solve. Therefore, the main focus of this paper is to study the complexity of evaluating the function $\Z(\bfx)$ and provide sufficient conditions under which the inner supremum is efficiently solvable. 

Other useful tools that this paper relies on are summarized below.
\begin{property} 
\begin{enumerate}[label=(\roman*)]
\item (Dual Norm, \citealt{rockafellar1970convex}) For any norm $\|\cdot\|_p$ with $p\in [1,\infty]$, its dual norm is $$\|\bm{r}\|_{p*}=\max_{\bm{s}}\left\{\bm{r}^\top\bm{s}:\|\bm{s}\|_p\leq 1\right\},$$
where $p*=\frac{p}{p-1}$;
\item (Integral Polyhedron, \citealt{schrijver1998theory}) Given a rational polyhedron $P=\{\bm r\in \Re^n: \bm{A}\bm r\geq \bm b\}$ is integral if and only if $P=\conv(P\cap \Ze^n)$;
\item (Tractability, \citealt{ben2009robust})  We say the function $\Z(\bfx)$ has a tractable representation, if for any given $\bm{\bfx}\in \Re^{n_1}$, there exists an efficient algorithm which can evaluate the function $\Z(\bfx)$ in time polynomial in $n_1,n_2,m_2,m_2,\ell, N$.
\end{enumerate}
\end{property}

\section{Continuous Support: Tractable Reformulations and Complexity Analysis}\label{sec_cont}

In this section, we first provide the tractable representations of the function $\Z(\bm{x})$ under various settings and then show that in general, it is NP-hard to evaluate the function $\Z(\bm{x})$. We split this section into four parts, which include tractable reformulations of general DRTSP, special DRTSP with objective uncertainty only, special DRTSP  with constraint uncertainty only, and complexity analysis.

\subsection{Tractable Reformulation I: General DRTSP with $L_\infty$ Reference Distance}
For the general DRTSP, we show that the function $\Z(\bm{x})$ has a tractable representation given that the reference distance is $\|\cdot\|_p=\|\cdot\|_\infty$ (i.e., $p=\infty$) and the image of the technology mapping $\T(\bfx)$ is always non-negative or non-positive.
\begin{theorem}\label{thm_tractable1}
Suppose that $\Xi=\Re^{m_1}\times \Re^{m_2}$. 
If $p=\infty$ and $\T(\bfx)\in \Re_+^{\ell\times m_1}$ or $\T(\bfx)\in \Re_-^{\ell\times m_1}$, then the function $\Z(\bm{x})$ is equivalent to 
\begin{align}
	&\Z(\bm{x})= \frac{1}{N}\sum_{j\in [N]}\min_{ \bm{y}\in \Re^{n_2}}	\left\{(\bm{Q}\bfzeta_q^j+\bm{q})^\top \bm{y}+\theta \|\bm{Q}^\top\bm{y}\|_1:\bm{T}(\bfx)\bfzeta^j_T+\W\bm{y}-\theta|\T(\bm x)|\e\geq \bm{h}(\bm{x}) \right\}.\label{sp_svm8}
	\end{align}


\end{theorem}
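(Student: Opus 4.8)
The plan is to start from the bilinear representation in Lemma~\ref{lem_equivalent}, and exploit the structure created by the two hypotheses $p=\infty$ and $\T(\bfx)\in \Re_+^{\ell\times m_1}\cup \Re_-^{\ell\times m_1}$. Fix a sample index $j$ and look at the inner supremum
\begin{align*}
\sup_{\bxi_q,\bxi_T,\bm\pi\ge 0}\Bigl\{(\bm h(\bfx)-\T(\bfx)\bxi_T)^\top\bm\pi:\ \|(\bxi_q,\bxi_T)-(\bfzeta_q^j,\bfzeta_T^j)\|_\infty\le\theta,\ \W^\top\bm\pi=\bm Q\bxi_q+\bm q\Bigr\}.
\end{align*}
Because $p=\infty$, the ball $\|(\bxi_q,\bxi_T)-(\bfzeta_q^j,\bfzeta_T^j)\|_\infty\le\theta$ is a box, so it decouples coordinatewise into $\bxi_T\in[\bfzeta_T^j-\theta\e,\bfzeta_T^j+\theta\e]$ and an independent box constraint on $\bxi_q$. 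First I would deal with the $\bxi_T$ maximization: for fixed $\bm\pi\ge0$, the objective is linear in $\bxi_T$ with coefficient vector $-\T(\bfx)^\top\bm\pi$, which is sign-definite entrywise since $\T(\bfx)$ has entries all of one sign and $\bm\pi\ge0$. Hence the maximizing $\bxi_T$ over the box is attained at an explicit corner, and $\max_{\bxi_T}(\bm h(\bfx)-\T(\bfx)\bxi_T)^\top\bm\pi=(\bm h(\bfx)-\T(\bfx)\bfzeta_T^j)^\top\bm\pi+\theta\,(|\T(\bfx)|\e)^\top\bm\pi$; the key point is that $|\T(\bfx)|\bm\pi=|\T(\bfx)\bm\pi|$ componentwise precisely because of the sign condition, giving the clean term $\theta(|\T(\bfx)|\e)^\top\bm\pi$.

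Next I would handle the $\bxi_q$ maximization. The remaining problem is $\sup_{\bxi_q,\bm\pi\ge0}\{(\bm h(\bfx)-\T(\bfx)\bfzeta_T^j)^\top\bm\pi+\theta(|\T(\bfx)|\e)^\top\bm\pi:\ \|\bxi_q-\bfzeta_q^j\|_\infty\le\theta,\ \W^\top\bm\pi=\bm Q\bxi_q+\bm q\}$. The objective no longer depends on $\bxi_q$, so $\bxi_q$ appears only in the equality constraint; substituting $\bxi_q=\bfzeta_q^j+\bm\delta$ with $\|\bm\delta\|_\infty\le\theta$ turns it into $\W^\top\bm\pi=\bm Q\bfzeta_q^j+\bm q+\bm Q\bm\delta$. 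I would then take the Lagrangian dual of this inner LP in the variables $(\bm\pi,\bm\delta)$ — equivalently, minimize over the dual multipliers $\bm y\in\Re^{n_2}$ of the equality constraint. Because the dual of the second-stage problem is feasible (the Sufficiently Expensive Recourse assumption) and the box for $\bm\delta$ is compact, strong duality holds and the sup is finite; dualizing the equality with $\bm y$ and maximizing out $\bm\delta$ over its box produces the penalty $\theta\|\bm Q^\top\bm y\|_1$ (the support function of the $\ell_\infty$-ball of radius $\theta$ under the linear map $\bm\delta\mapsto\bm Q^\top\bm y\cdot\bm\delta$... more precisely $\max_{\|\bm\delta\|_\infty\le\theta}-(\bm Q\bm\delta)^\top\bm y=\theta\|\bm Q^\top\bm y\|_1$), the term $(\bm Q\bfzeta_q^j+\bm q)^\top\bm y$, and the requirement that the $\bm\pi$-maximization be bounded collapses to $\W\bm y\ge \bm h(\bfx)-\T(\bfx)\bfzeta_T^j+\theta|\T(\bfx)|\e$. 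Rearranging and summing over $j$ yields exactly \eqref{sp_svm8}.

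The main obstacle I anticipate is making the interchange of the two suprema (over $\bxi$ and over $\bm\pi$) and the subsequent LP duality fully rigorous: the combined inner problem is bilinear, not jointly concave, so one cannot simply quote a minimax theorem. The clean path is to first eliminate $\bxi_T$ pointwise in $\bm\pi$ (no minimax needed — it is an explicit linear maximization over a box for each fixed $\bm\pi$), which is legitimate, and only then to treat the residual problem in $(\bm\pi,\bxi_q)$, which after the substitution is a genuine linear program in those jointly; there strong duality applies cleanly. A secondary care point is boundedness: I must invoke the Sufficiently Expensive Recourse assumption so that the $\bm\pi$-maximization is finite for every admissible $\bxi$, ensuring the dual LP has the stated form rather than a $+\infty$ value, and I should note that feasibility of \eqref{sp_svm8}'s constraint system for some $\bm y$ is exactly what that assumption guarantees. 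Finally I would double-check the sign bookkeeping in the identity $|\T(\bfx)|\bm\pi=|\T(\bfx)\bm\pi|$ for $\bm\pi\ge0$, as this is where the hypothesis $\T(\bfx)\in\Re_+^{\ell\times m_1}\cup\Re_-^{\ell\times m_1}$ is essential and where an error would be easy to make.
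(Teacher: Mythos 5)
Your proposal is correct and follows essentially the same route as the paper's proof: eliminate $\bxi_T$ first via the dual norm of $\|\cdot\|_\infty$, use the sign condition on $\T(\bfx)$ together with $\bm\pi\geq \bm 0$ to linearize $\|\T(\bfx)^\top\bm\pi\|_1$ as $\e^\top|\T(\bfx)|^\top\bm\pi$, and then apply LP strong duality to the residual linear program in $(\bm\pi,\bxi_q)$ before optimizing those variables out. Your explicit justification of why the $\bxi_T$-elimination is a pointwise linear maximization (so no minimax theorem is needed) and of where the sufficiently-expensive-recourse assumption enters is in fact more careful than the paper's own write-up; the only nit is the transposition typo $|\T(\bfx)|\bm\pi$, which should read $|\T(\bfx)|^\top\bm\pi=|\T(\bfx)^\top\bm\pi|$.
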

\begin{subequations}
\begin{proof}
Since $\Xi=\Re^{m_1}\times \Re^{m_2}$ and $p=\infty$, thus \eqref{sp_svm2} becomes
\begin{align*}
	&\Z(\bm{x})= \frac{1}{N}\sum_{j\in [N]}\sup_{\bm{\pi}\in \Re^{\ell}_+,\bxi_q,\bxi_T}\left\{(\bm{h}(\bm{x})-\bm{T}(\bfx)\bxi_T)^\top\bm{\pi}:\|\bxi_q-\bfzeta_q^j\|_\infty\leq \theta,\|\bxi_T-\bfzeta_T^j\|_\infty\leq \theta,\bm{W}^\top \bm{\pi}= \bm{Q}\bxi_q+\bm{q}\right\}.
	\end{align*}
Above, optimizing $\bxi_T$ and using the dual norm of $\|\cdot\|_\infty$, we have
\begin{align}
	&\Z(\bm{x})= \frac{1}{N}\sum_{j\in [N]}\sup_{ \bm{\pi}\in \Re^{\ell}_+,\bxi_q}\left\{(\bm{h}(\bm{x})-\bm{T}(\bfx)\bfzeta^j_T)^\top\bm{\pi}+\theta\|\bm{T}(\bfx)^\top \bm{\pi}\|_{1}:\|\bxi_q-\bfzeta_q^j\|_\infty\leq \theta, \bm{W}^\top \bm{\pi}= \bm{Q}\bxi_q+\bm{q} \right\}.\label{sp_svm41}
\end{align} 
Note that since $\T(\bfx)\in \Re_+^{\ell\times m_1}$ or $\T(\bfx)\in \Re_-^{\ell\times m_1}$, thus $\|\T(\bfx)^\top\bm\pi\|_{1}=\e^\top|\T(\bfx)|^\top \bm{\pi}$. 
Let $\bm{y}$ denote the dual variables of the constraints $\bm{W}^\top \bm{\pi}= \bm{Q}\bxi_q+\bm{q}$. Then according to the strong duality of linear programming, \eqref{sp_svm41} is equivalent to
\begin{align}
\Z(\bm{x})= \frac{1}{N}\sum_{j\in [N]}\min_{ \bm{y}\in \Re^{n_2}}	&\sup_{ \bm{\pi}\in \Re^{\ell}_+,\bxi_q}\left\{(\bm{h}(\bm{x})-\bm{T}(\bfx)\bfzeta^j_T)^\top\bm{\pi}+\theta\e^\top|\T(\bfx)|\top \bm{\pi}+\bm{y}^\top(\bm{Q}\bxi_q+\bm{q}-\bm{W}^\top \bm{\pi}):\right.\notag\\
&\left.\|\bxi_q-\bfzeta_q^j\|_\infty\leq \theta \right\},\label{sp_svm52}
\end{align} 
which is equivalent to \eqref{sp_svm8} by optimizing over $(\bxi_q,\bm\pi)$.
  \qedA
\end{proof}
\end{subequations}

We make the following remarks about \Cref{thm_tractable1} and its corresponding formulation \eqref{sp_svm8}.
\begin{enumerate}[label=(\roman*)]
\item We can introduce auxiliary variables to linearize the terms $ \|\bm{Q}^\top\bm{y}\|_1,|\T(\bm x)|$ and reformulate the minimization problem \eqref{sp_svm8} as a linear program;
\item If $\theta=0$, i.e., if the empirical distribution is sufficient to describe the probability of random parameters, then
\begin{align}
\Z(\bm{x})=\frac{1}{N}\sum_{j\in [N]}Z(\bm{x},\bm{\zeta}^j);
\end{align}
\item The extra terms, $\theta \|\bm{Q}^\top\bm{y}\|_1$ in the objective and $-\theta|\T(\bm x)|_{1}\e$ in the constraints, enforce the robustness of the proposed formulation due to ambiguous distributional information. These terms will vanish if more and more observations have been made to drive the Wasserstein radius to be 0. For more discussions about asymptotic behavior of Wasserstein ambiguity sets, interested readers are referred to \cite{bertsimas2019twostage,bertsimas2018data,blanchet2016quantifying,esfahani2015data,hanasusanto2016conic,xie2018drccp}; 
\item If the assumption that $\T(\bfx)\in \Re_+^{\ell\times m_1}$ or $\T(\bfx)\in \Re_-^{\ell\times m_1}$ does not hold, then \eqref{sp_svm8} provides an upper bound for $\Z(\bfx)$ and this upper bound will become exact when $\theta\rightarrow 0$; and
\item Similarly, if the reference distance is defined by other norm $\|\cdot\|_p$, then according to the following formula
\[\|\bxi\|_p\leq \sqrt[p]{m_1+m_2}\|\bxi\|_\infty.\]
Thus, \eqref{sp_svm8} provides an upper bound for $\Z(\bfx)$ by inflating $\theta$ to $\sqrt[p]{m_1+m_2}\theta$ and this upper bound will become exact when $\theta\rightarrow 0$.
\end{enumerate}

According to the representation result in \Cref{thm_tractable1}, we provide the following equivalent deterministic reformulation of DRTSP \eqref{sp}. 
\begin{proposition}\label{thm_tractable1_deterministic}
Suppose that $\Xi=\Re^{m_1}\times \Re^{m_2}$. 
If $p=\infty$ and $\T(\bfx)\in \Re_+^{\ell\times m_1}$ or $\T(\bfx)\in \Re_-^{\ell\times m_1}$, then DRTSP \eqref{sp} is equivalent to 
\begin{subequations}
\begin{align}
v^*= \min_{\bm{x},\bm{y}}\quad& \bm{c}^{\top}\bm{x}+\frac{1}{N}\sum_{j\in [N]}[(\bm{Q}\bfzeta_q^j+\bm{q})^\top \bm{y}^j+\theta \|\bm{Q}^\top\bm{y}^j\|_1],\\
\textrm{s.t.   }	\quad&\bm{T}(\bfx)\bfzeta^j_T+\W\bm{y}^j-\theta|\T(\bm x)|_{1}\e\geq \bm{h}(\bm{x}),\forall j\in [N],\\
&\bm{x}\in \X,\bm{y}^j\in \Re^{n_2},\forall j\in [N].
	\end{align}
\end{subequations}

\end{proposition}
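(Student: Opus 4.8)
The plan is to combine Theorem~\ref{thm_tractable1} with the original formulation \eqref{sp} by direct substitution. Recall that \eqref{sp} reads $v^* = \min_{\bm x \in \X} \{\bm c^\top \bm x + \Z(\bm x)\}$, and under the stated hypotheses ($\Xi = \Re^{m_1}\times\Re^{m_2}$, $p=\infty$, and $\T(\bfx)\in\Re_+^{\ell\times m_1}$ or $\T(\bfx)\in\Re_-^{\ell\times m_1}$) Theorem~\ref{thm_tractable1} gives the closed form
\[
\Z(\bm x) = \frac{1}{N}\sum_{j\in[N]}\min_{\bm y\in\Re^{n_2}}\left\{(\bm Q\bfzeta_q^j+\bm q)^\top\bm y+\theta\|\bm Q^\top\bm y\|_1 : \bm T(\bfx)\bfzeta_T^j+\W\bm y-\theta|\T(\bm x)|\e\geq\bm h(\bm x)\right\}.
\]
First I would substitute this expression into the objective of \eqref{sp}, obtaining
\[
v^* = \min_{\bm x\in\X}\left\{\bm c^\top\bm x + \frac{1}{N}\sum_{j\in[N]}\min_{\bm y\in\Re^{n_2}}\left\{(\bm Q\bfzeta_q^j+\bm q)^\top\bm y+\theta\|\bm Q^\top\bm y\|_1 : \bm T(\bfx)\bfzeta_T^j+\W\bm y-\theta|\T(\bm x)|\e\geq\bm h(\bm x)\right\}\right\}.
\]

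Next I would argue that the nested minimizations can be merged into a single joint minimization. The key observation is that the inner minimization over $\bm y$ for each sample index $j$ is completely independent across $j$ — the $j$-th inner problem involves only its own copy of the recourse variable and only data $\bfzeta_q^j,\bfzeta_T^j$ — so introducing one decision vector $\bm y^j\in\Re^{n_2}$ per scenario and minimizing jointly over $(\bm x, \bm y^1,\dots,\bm y^N)$ gives the same value as first fixing $\bm x$ and optimizing each $\bm y^j$ separately. This is the standard interchange $\min_{\bm x}\sum_j \min_{\bm y^j} f_j(\bm x,\bm y^j) = \min_{\bm x,\{\bm y^j\}}\sum_j f_j(\bm x,\bm y^j)$, valid whenever each inner infimum is attained or at least the values agree; here attainment follows from the Sufficiently Expensive Recourse assumption together with the strong-duality arguments already invoked in the proof of Theorem~\ref{thm_tractable1}, so no new feasibility subtlety arises. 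Collecting the constraints $\bm x\in\X$ and, for each $j\in[N]$, $\bm y^j\in\Re^{n_2}$ together with $\bm T(\bfx)\bfzeta_T^j+\W\bm y^j-\theta|\T(\bm x)|\e\geq\bm h(\bm x)$, and the objective $\bm c^\top\bm x+\frac1N\sum_{j\in[N]}[(\bm Q\bfzeta_q^j+\bm q)^\top\bm y^j+\theta\|\bm Q^\top\bm y^j\|_1]$, yields exactly the claimed formulation.

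There is essentially no hard part here: the statement is a corollary of Theorem~\ref{thm_tractable1} obtained by plugging the reformulation of $\Z(\bm x)$ into \eqref{sp} and splitting the common recourse variable into per-scenario copies. The only point requiring a sentence of care is the legitimacy of the min–min interchange, i.e. that no spurious coupling is created by letting $\bm x$ and all the $\bm y^j$ vary simultaneously; this is immediate from separability of the scenario subproblems in $\bm y$ once $\bm x$ is fixed. I would also note in passing (as the remarks after Theorem~\ref{thm_tractable1} already do) that the terms $\theta\|\bm Q^\top\bm y^j\|_1$ and $\theta|\T(\bm x)|\e$ can be linearized with auxiliary variables, so the displayed program is a genuine convex (indeed linear, when $\X$ is polyhedral) reformulation, but that observation is not needed for the proof of equivalence itself.
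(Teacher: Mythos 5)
Your proposal is correct and matches the paper's approach: the paper states this proposition as an immediate consequence of Theorem~\ref{thm_tractable1} (offering no separate proof), and your substitution of the reformulated $\Z(\bm{x})$ into \eqref{sp} followed by the standard min--min interchange across independent scenario subproblems is exactly the intended argument.
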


The following example illustrates how to use the proposed formulation in practical application problems.

\begin{example}\label{example1}\rm \textbf{(Reliable Facility Location Problem (RFLP) under Probabilistic Disruptions)} Let us consider a two-stage facility location problem with random demands and probabilistic disruptions, an extension of the work \citep{Cui10,lu2015reliable}. Suppose a warehousing company needs to build facilities at candidate locations indexed by $[n_1]$, which are required to serve customers at locations indexed by $[\ell]$. Each facility $s\in [n_1]$ bears a setup cost $f_s$ and due to catastrophic events (e.g., hurricane, power outage, etc.), it might be disrupted, thus, we use $\tilde{\delta}_s\in \{0,1\}$ to denote its status, i.e., $\tilde{\delta}_s=1$ if it will function well, 0, otherwise. We suppose that each customer $t\in [\ell]$ has a stochastic demand $\tilde{d}_t$ and incurs a unit transportation cost for a shipment from facility $s\in [n_1]$, denoted by $c_{ts}$. The random parameters $\tilde{\bxi}=(\tilde{\bm\delta}, \tilde{\bm d})$. Suppose there are $N$ empirical data points available, denoted by $\{\bm\zeta^j:=(\hat{\bm{\delta}}^j,\hat{\bm{d}}^j)\}_{j\in [N]}$.

To ensure the feasibility of the model, similar to \cite{Cui10,lu2015reliable}, we assume that there is an emergency (or dummy) facility indexed by $n_1+1$, which will be never disrupted, and its unit transportation cost for each customer $t\in [\ell]$ is $c_{t(n_1+1)}=M$, where $M$ is a large number. Under this setting, distributionally robust RFLP (DR-RFLP) can be formulated as
\begin{subequations}\label{DR-FRLP}
\begin{align}
v^*= \min_{\bm{x}} & \ \ \bm{f}^{\top}\bm{x}+\Z(\bm{x}), \label{RFLP_sp-obj} \\
\rm{s.t. } 
& \ \ \bm{x} \in \{0,1\}^{n_1}, \label{RFLP_sp-det}\\
& \ \ \Z(\bm{x})=\sup_{\Pr\in \P}\E_{\Pr}[Z(\bm{x},\tilde{\bm{\xi}})],\label{RFLP_sp-recourse}
\end{align}
\end{subequations}
where the recourse function is 
\begin{subequations}\label{RFLP_recourse}
\begin{align}
Z(\bm{x},\bm{\xi})=\min_{\bm{y}} & \ \ \sum_{t\in [\ell]}\sum_{s\in [n_1+1]}c_{ts}d_t y_{ts}, \label{RFLP_sp-obj_2nd} \\
\rm{s.t. } & \ \ \sum_{s\in [n_1+1]}y_{ts}=1, \forall t\in [\ell], \label{RFLP_sp-det_2nd1}\\
&\ \ y_{ts} \leq \delta_s x_s, \forall t\in [\ell], \forall s\in [n_1],\label{RFLP_sp-det_2nd2}\\
& \ \ \bm{y}\in \Re_+^{\ell\times n_1}.\label{RFLP_sp-bound_2nd}
\end{align}
\end{subequations}

Suppose the reference distance is $\|\cdot\|_{\infty}$ and the support of $\tilde{\bm{\xi}}$ is $\Re^{n_1}\times \Re^\ell$. Since the coefficients of uncertain parameters $\tilde{\bm{\delta}}$ in the constraints \eqref{RFLP_sp-det_2nd2} always have the same sign, according to \Cref{thm_tractable1_deterministic}, DR-RFLP can be equivalently formulated as the following mixed integer linear program (MILP):
\begin{subequations}\label{DR-FRLP1}
\begin{align}
v^*= \min_{\bm{x},\bm{y}} & \ \ \bm{f}^{\top}\bm{x}+\frac{1}{N}\sum_{j\in [N]}\sum_{t\in [\ell]}\sum_{s\in [n_1+1]}c_{ts}(\hat{d}_t^j +\theta)y_{ts}^j, \label{RFLP_sp-obj1} \\
\rm{s.t. } 
& \ \ \sum_{s\in [n_1+1]}y_{ts}^j=1, \forall j\in [N],\forall t\in [\ell],\\
&\ \ y_{ts}^j \leq (\hat{\delta}_s^j-\theta) x_s, \forall j\in [N],\forall t\in [\ell],\forall s\in [n_1], \label{RFLP_sp-det_2nd21}\\
& \ \ \bm{x} \in \{0,1\}^{n_1}, \bm{y}^j\in \Re_+^{\ell\times n_1},\forall j\in [N]. \label{RFLP_sp-det1}
\end{align}
\end{subequations}
\qedA
\end{example}

\subsection{Tractable Reformulation II: With Objective Uncertainty Only}
If there are only objective uncertainty involved in DRTSP, then the function $\Z(\bm{x})$ always has a tractable representation provided that the reference distance is $\|\cdot\|_p$ for any $p\in [1,\infty]$.

\begin{theorem}\label{thm_tractable1_AC}
Suppose that $\Xi=\Re^{m_1}\times\{\bxi_T\}$. Then for any $p\in [1,\infty]$, the function $\Z(\bm{x})$ is equivalent to 
\begin{align}
	&\Z(\bm{x})= \frac{1}{N}\sum_{j\in [N]}\min_{ \bm{y}\in \Re^{n_2}}	\left\{(\bm{Q}\bfzeta_q^j+\bm{q})^\top \bm{y}+\theta \|\bm{Q}^\top\bm{y}\|_{p*}:\bm{T}(\bfx)\bxi_T+\W\bm{y}\geq \bm{h}(\bm{x}) \right\},\label{sp_svm8_AC}
	\end{align}
where $\|\cdot\|_{p*}$ denotes the dual norm of $\|\cdot\|_{p}$ with $p*=\frac{p}{p-1}$.
\end{theorem}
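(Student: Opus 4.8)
The plan is to reuse the dualization argument from the proof of \Cref{thm_tractable1}, now exploiting that $\tilde{\bxi}_T$ is deterministic. That determinism removes precisely the two reasons why that proof needed $p=\infty$ — namely, splitting the $p$-ball into separate balls on $\bxi_q$ and $\bxi_T$, and eliminating $\bxi_T$ via the $\ell_1$ dual norm — so the argument will go through for every $p\in[1,\infty]$. First I would start from the bilinear representation \eqref{sp_svm2} of \Cref{lem_equivalent}. Since every sample lies in $\Xi=\Re^{m_1}\times\{\bxi_T\}$, we have $\bfzeta_T^j=\bxi_T$ for all $j\in[N]$, so the $T$-block of $(\bxi_q,\bxi_T)-(\bfzeta_q^j,\bfzeta_T^j)$ vanishes, the constraint $\|(\bxi_q,\bxi_T)-(\bfzeta_q^j,\bfzeta_T^j)\|_p\le\theta$ collapses to $\|\bxi_q-\bfzeta_q^j\|_p\le\theta$, and the objective $(\bm{h}(\bm{x})-\bm{T}(\bm{x})\bxi_T)^\top\bm{\pi}$ no longer involves $\bxi_q$. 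Hence, for each $j\in[N]$, the inner supremum in \eqref{sp_svm2} becomes
\[
\sup_{\bm{\pi}\in\Re^{\ell}_+,\ \|\bxi_q-\bfzeta_q^j\|_p\le\theta}\left\{(\bm{h}(\bm{x})-\bm{T}(\bm{x})\bxi_T)^\top\bm{\pi}:\ \bm{W}^\top\bm{\pi}=\bm{Q}\bxi_q+\bm{q}\right\},
\]
which is a joint maximization of a linear objective over a convex feasible set: polyhedral in $\bm{\pi}$, intersected with a $p$-norm ball in $\bxi_q$, linked through the linear equality.

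Next I would introduce a multiplier $\bm{y}\in\Re^{n_2}$ for the equality $\bm{W}^\top\bm{\pi}=\bm{Q}\bxi_q+\bm{q}$ and invoke strong duality to rewrite the displayed value as
\[
\min_{\bm{y}\in\Re^{n_2}}\ \sup_{\bm{\pi}\geq\bm{0},\ \|\bxi_q-\bfzeta_q^j\|_p\le\theta}\left\{(\bm{h}(\bm{x})-\bm{T}(\bm{x})\bxi_T-\bm{W}\bm{y})^\top\bm{\pi}+(\bm{Q}^\top\bm{y})^\top(\bxi_q-\bfzeta_q^j)+(\bm{Q}\bfzeta_q^j+\bm{q})^\top\bm{y}\right\}.
\]
The inner supremum then separates: the maximization over $\bm{\pi}\geq\bm{0}$ equals $0$ when $\bm{T}(\bm{x})\bxi_T+\bm{W}\bm{y}\ge\bm{h}(\bm{x})$ and $+\infty$ otherwise, so the outer $\min_{\bm{y}}$ effectively enforces that inequality; and the maximization of $(\bm{Q}^\top\bm{y})^\top(\bxi_q-\bfzeta_q^j)$ over the $p$-ball of radius $\theta$ equals $\theta\|\bm{Q}^\top\bm{y}\|_{p*}$ by the definition of the dual norm. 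Substituting these back and averaging over $j\in[N]$ yields exactly \eqref{sp_svm8_AC}.

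I expect the one genuinely delicate point to be the zero-duality-gap claim in the second step, because the primal inner problem is not a pure linear program: the $p$-norm ball is present. I would handle this by observing that it is a convex program whose only non-polyhedral constraint, $\|\bxi_q-\bfzeta_q^j\|_p\le\theta$, admits a Slater point ($\bxi_q=\bfzeta_q^j$ when $\theta>0$; the statement is trivial when $\theta=0$), while $\bm{\pi}\geq\bm{0}$ and $\bm{W}^\top\bm{\pi}=\bm{Q}\bxi_q+\bm{q}$ are polyhedral, and that the standing Sufficiently Expensive Recourse assumption keeps the feasible set nonempty for every $\bxi_q$ in the ball — it guarantees $\{\bm{\pi}\geq\bm{0}:\bm{W}^\top\bm{\pi}=\bm{Q}\bxi_q+\bm{q}\}\neq\emptyset$ — and the optimal value finite. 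Under these conditions a partial-Slater convex duality theorem gives a zero gap with attainment on the $\min_{\bm{y}}$ side. Everything else is routine bookkeeping with the dual norm and the separation of the inner supremum into its $\bm{\pi}$- and $\bxi_q$-parts.
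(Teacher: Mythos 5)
Your proposal is correct and follows essentially the same route as the paper: reduce \eqref{sp_svm2} to a supremum over $(\bm{\pi},\bxi_q)$ only, dualize the coupling equality $\bm{W}^\top\bm{\pi}=\bm{Q}\bxi_q+\bm{q}$ with multiplier $\bm{y}$, and then eliminate $\bm{\pi}$ (yielding the feasibility constraint) and $\bxi_q$ (yielding the dual-norm penalty $\theta\|\bm{Q}^\top\bm{y}\|_{p*}$). The only difference is cosmetic: where the paper invokes strong conic duality with a one-line appeal to essential strict feasibility, you spell out the partial-Slater argument explicitly, which is a welcome but not substantively different justification.
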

\begin{subequations}

\begin{proof}
Since $\Xi=\Re^{m_1}\times\{\bxi_T\}$, \eqref{sp_svm2} becomes
\begin{align}
	&\Z(\bm{x})= \frac{1}{N}\sum_{j\in [N]}\sup_{ \bm{\pi}\in \Re^{\ell}_+,\bxi_q}\left\{(\bm{h}(\bm{x})-\bm{T}(\bfx)\bxi_T)^\top\bm{\pi}:\|\bxi_q-\bfzeta_q^j\|_p\leq \theta, \bm{W}^\top \bm{\pi}= \bm{Q}\bxi_q+\bm{q} \right\}.\label{sp_svm41_AC}
\end{align} 
Let $\bm{y}$ denote the dual variables of the constraints $\bm{W}^\top \bm{\pi}= \bm{Q}\bxi_q+\bm{q}$. Since the inner supremum of \eqref{sp_svm41_AC} is essentially strictly feasible,  according to the strong duality of conic programming \citep{ben2001lectures}, \eqref{sp_svm41} is equivalent to
\begin{align}
\Z(\bm{x})= \frac{1}{N}\sum_{j\in [N]}\min_{ \bm{y}\in \Re^{n_2}}	&\sup_{ \bm{\pi}\in \Re^{\ell}_+,\bxi_q}\left\{(\bm{h}(\bm{x})-\bm{T}(\bfx)\bxi_T)^\top\bm{\pi}+\bm{y}^\top(\bm{Q}\bxi_q+\bm{q}-\bm{W}^\top \bm{\pi})\right.\notag\\
&\left.\|\bxi_q-\bfzeta_q^j\|_p\leq \theta \right\},\label{sp_svm52_AC}
\end{align} 
which is further equivalent to \eqref{sp_svm8} by optimizing over $(\bm{\pi},\bxi_q)$.
  \qedA
\end{proof}
\end{subequations}
We make the following remarks about \Cref{thm_tractable1_AC} and its corresponding formulation \eqref{sp_svm8_AC}.
\begin{enumerate}[label=(\roman*)]
\item For any rational $p\in [1,\infty]$, the penalty term $\theta \|\bm{Q}^\top\bm{y}\|_{p*}$ is second order conic representable \citep{ben2001lectures}. Therefore, \eqref{sp_svm8_AC} can be further reformulated as a second order conic program; and
\item The penalty term, $\theta \|\bm{Q}^\top\bm{y}\|_{p*}$ in the objective, enforces the robustness of the proposed model due to ambiguous distributional information. This term will vanish if more and more observations have been made to drive the Wasserstein radius to 0.
\end{enumerate}

We provide the following equivalent deterministic reformulation of DRTSP \eqref{sp} with objective uncertainty only. 
\begin{proposition}\label{thm_tractable1_deterministic_AC}
Suppose that $\Xi=\Re^{m_1}\times\{\bxi_T\}$. Then for any $p\in [1,\infty]$, DRTSP \eqref{sp} is equivalent to 
\begin{subequations}
\begin{align}
v^*= \min_{\bm{x},\bm{y}}\quad& \bm{c}^{\top}\bm{x}+\frac{1}{N}\sum_{j\in [N]}\left[(\bm{Q}\bfzeta_q^j+\bm{q})^\top \bm{y}^j+\theta \|\bm{Q}^\top\bm{y}^j\|_{p*}\right],\\
\textrm{s.t.   }	\quad&\bm{T}(\bfx)\bxi_T+\W\bm{y}^j\geq \bm{h}(\bm{x}),\forall j\in [N],\\
&\bm{x}\in \X,\bm{y}^j\in \Re^{n_2},\forall j\in [N].
	\end{align}
\end{subequations}

\end{proposition}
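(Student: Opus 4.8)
The plan is to obtain Proposition~\ref{thm_tractable1_deterministic_AC} as an immediate corollary of Theorem~\ref{thm_tractable1_AC} by substituting the tractable representation of $\Z(\bm{x})$ into the original DRTSP~\eqref{sp}. First I would recall that, under the hypothesis $\Xi=\Re^{m_1}\times\{\bxi_T\}$, Theorem~\ref{thm_tractable1_AC} gives the pointwise identity
\[
\Z(\bm{x})= \frac{1}{N}\sum_{j\in [N]}\min_{ \bm{y}\in \Re^{n_2}}\left\{(\bm{Q}\bfzeta_q^j+\bm{q})^\top \bm{y}+\theta \|\bm{Q}^\top\bm{y}\|_{p*}:\bm{T}(\bfx)\bxi_T+\W\bm{y}\geq \bm{h}(\bm{x}) \right\}
\]
for every $\bm{x}\in\Re^{n_1}$ (in particular for every $\bm{x}\in\X$). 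Plugging this into \eqref{sp-obj}–\eqref{sp-recourse} turns $v^*=\min_{\bm{x}\in\X}\{\bm{c}^\top\bm{x}+\Z(\bm{x})\}$ into a joint minimization over $\bm{x}\in\X$ and over the $N$ inner minimizers, since for a fixed feasible $\bm{x}$ the sum $\frac1N\sum_j\min_{\bm{y}^j}(\cdot)$ decouples across $j$ and may be written as a single minimization over the tuple $(\bm{y}^1,\dots,\bm{y}^N)$.

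The key steps, in order, are: (1) quote Theorem~\ref{thm_tractable1_AC} to replace $\Z(\bm{x})$ by its reformulation; (2) interchange the outer $\min_{\bm{x}\in\X}$ with the inner per-scenario minimizations, i.e. $\min_{\bm{x}\in\X}\big[\bm{c}^\top\bm{x}+\frac1N\sum_j\min_{\bm{y}^j}\,h_j(\bm{x},\bm{y}^j)\big]=\min_{\bm{x}\in\X,\,\bm{y}^1,\dots,\bm{y}^N}\big[\bm{c}^\top\bm{x}+\frac1N\sum_j h_j(\bm{x},\bm{y}^j)\big]$, which is valid because $\bm{c}^\top\bm{x}$ does not depend on the $\bm{y}^j$ and each inner feasible set $\{\bm{y}^j:\bm{T}(\bfx)\bxi_T+\W\bm{y}^j\geq \bm{h}(\bm{x})\}$ depends only on $\bm{x}$ and is nonempty by the Sufficiently Expensive Recourse assumption (so that no inner minimum is $-\infty$ and, since the dual of \eqref{recourse} is feasible, $Z(\bm{x},\bxi)>-\infty$, keeping the objective bounded below); and (3) rename the inner variables to $\bm{y}^j$ and collect the scenario constraints $\bm{T}(\bfx)\bxi_T+\W\bm{y}^j\geq \bm{h}(\bm{x})$ for all $j\in[N]$ together with $\bm{x}\in\X$, yielding exactly the displayed formulation.

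I do not expect a genuine obstacle here; the only point requiring minor care is the legitimacy of the $\min$–$\min$ interchange in step (2), which is a standard epigraph argument but should be justified by noting that the inner problems are feasible for every $\bm{x}\in\X$ (hence the value function is proper) and that the objective is jointly well-defined so that optimal or $\varepsilon$-optimal $\bm{y}^j$ can be chosen for any fixed $\bm{x}$ and, conversely, any feasible $(\bm{x},\bm{y}^1,\dots,\bm{y}^N)$ yields a feasible $\bm{x}$ with objective no smaller than $\bm{c}^\top\bm{x}+\Z(\bm{x})$. Everything else is a direct transcription of Theorem~\ref{thm_tractable1_AC}.
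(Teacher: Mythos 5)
Your proposal is correct and matches the paper's (implicit) argument: the paper states Proposition \ref{thm_tractable1_deterministic_AC} as a direct consequence of Theorem \ref{thm_tractable1_AC}, obtained exactly by substituting the representation of $\Z(\bm{x})$ into \eqref{sp} and merging the outer minimization over $\bm{x}\in\X$ with the per-scenario inner minimizations. Your added care about the legitimacy of the $\min$--$\min$ interchange (feasibility and boundedness of each inner problem under the sufficiently expensive recourse assumption) is sound and, if anything, more explicit than what the paper provides.
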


We will illustrate the proposed formulation using \Cref{example1}, where we suppose that there are no disruption risks, i.e., the only uncertain parameters are customers' demands.
\begin{example}\label{example2}\rm Following the notation in \Cref{example1}, let us consider DR-RFLP with demand uncertainty only, i.e., the random parameters $\tilde{\bm\delta}$ satisfy $\Pr\{\tilde{\bm\delta}=\bm \delta\}=1$.

Suppose the reference distance is $\|\cdot\|_{p}$ and the support of $\tilde{\bm{\xi}}$ is $\{\bm\delta\}\times \Re^\ell$. According to \Cref{thm_tractable1_deterministic_AC}, DR-FRLP with demand uncertainty only can be equivalently formulated as the following mixed integer conic program (MICP):
\begin{subequations}\label{DR-FRLP12}
\begin{align}
v^*= \min_{\bm{x},\bm{y}} & \ \ \bm{f}^{\top}\bm{x}+\frac{1}{N}\sum_{j\in [N]}\left[\sum_{t\in [\ell]}\sum_{s\in [n_1+1]}c_{ts}\hat{d}_t^j y_{ts}^j+\theta \sqrt[p*]{\sum_{t\in [\ell]}\left(\sum_{s\in [n_1+1]}c_{ts}y_{ts}^j\right)^{p*}}\right], \label{RFLP_sp-obj12} \\
\rm{s.t. } 
& \ \ \sum_{s\in [n_1+1]}y_{ts}^j=1, \forall j\in [N],\forall t\in [\ell],\\
&\ \ y_{ts}^j \leq \delta_sx_s, \forall j\in [N],\forall t\in [\ell],\forall s\in [n_1], \label{RFLP_sp-det_2nd212}\\
& \ \ \bm{x} \in \{0,1\}^{n_1}, \bm{y}^j\in \Re_+^{\ell\times n_1},\forall j\in [N]. \label{RFLP_sp-det12}
\end{align}
\end{subequations}
\QEDA
\end{example}

\subsection{Tractable Reformulation III: With Constraint Uncertainty Only}

If there are only constraint uncertainty involved in DRTSP, then the function $\Z(\bm{x})$ can have a tractable representation given that the reference distance when $p=1$.
\begin{theorem}\label{thm_approximation1} 
Suppose that $\Xi= \{\bxi_q\}\times \Re^{m_2}$ and $p=1$. Then the function $\Z(\bm{x})$ is equivalent to 
\begin{align}
	&\Z(\bm{x})= \frac{1}{N}\sum_{j\in [N]}\max_{r\in \{-1,1\}}\max_{i\in [m_1]}\min_{ \bm{y}\in \Re^{n_2}}	\left\{(\bm{Q}\bxi_q+\bm{q})^\top \bm{y}:\bm{T}(\bfx)\bfzeta^j_T+\W\bm{y}-\theta r\T(\bm x)\e_i\geq \bm{h}(\bm{x}) \right\}.\label{sp_svm5}
	\end{align}
\end{theorem}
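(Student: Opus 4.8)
The plan is to begin from the bilinear representation of \Cref{lem_equivalent} and peel off the variables one family at a time. First I would specialize \eqref{sp_svm2} to the present data: because $\Xi=\{\bxi_q\}\times\Re^{m_2}$, every sample satisfies $\bfzeta_q^j=\bxi_q$, so the constraint $\|(\bxi_q,\bxi_T)-(\bfzeta_q^j,\bfzeta_T^j)\|_1\le\theta$ collapses to $\|\bxi_T-\bfzeta_T^j\|_1\le\theta$, while the linear system $\W^\top\bm\pi=\bm{Q}\bxi_q+\bm{q}$ no longer depends on the uncertainty. Thus the $j$-th summand of $\Z(\bm{x})$ is
\[\sup_{\bxi_T:\,\|\bxi_T-\bfzeta_T^j\|_1\le\theta}\ \sup_{\bm\pi\in\Re^{\ell}_+:\,\W^\top\bm\pi=\bm{Q}\bxi_q+\bm{q}}\bigl(\bm{h}(\bm{x})-\T(\bm{x})\bxi_T\bigr)^\top\bm\pi.\]

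Next I would eliminate $\bxi_T$. Substituting $\bxi_T=\bfzeta_T^j+\bm{d}$ with $\|\bm{d}\|_1\le\theta$, the objective splits as $(\bm{h}(\bm{x})-\T(\bm{x})\bfzeta_T^j)^\top\bm\pi-\bm\pi^\top\T(\bm{x})\bm{d}$; since the feasible set is a Cartesian product, the supremum over $\bm{d}$ may be taken first, and by the $\ell_1$–$\ell_\infty$ dual-norm identity (Property~(i)) one gets $\sup_{\|\bm{d}\|_1\le\theta}\bigl(-\bm\pi^\top\T(\bm{x})\bm{d}\bigr)=\theta\|\T(\bm{x})^\top\bm\pi\|_\infty$. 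The $j$-th summand then reads $\sup_{\bm\pi\in\Re^{\ell}_+,\ \W^\top\bm\pi=\bm{Q}\bxi_q+\bm{q}}\bigl[(\bm{h}(\bm{x})-\T(\bm{x})\bfzeta_T^j)^\top\bm\pi+\theta\|\T(\bm{x})^\top\bm\pi\|_\infty\bigr]$, i.e.\ the maximization of a convex piecewise-linear function of $\bm\pi$ over a polyhedron.

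Then I would linearize the penalty via $\|\T(\bm{x})^\top\bm\pi\|_\infty=\max_{r\in\{-1,1\}}\max_{i\in[m_1]}r\,(\T(\bm{x})\e_i)^\top\bm\pi$, so the inner objective becomes the pointwise maximum over the $2m_1$ indices $(r,i)$ of the linear functions $\bm\pi\mapsto(\bm{h}(\bm{x})-\T(\bm{x})\bfzeta_T^j+\theta r\T(\bm{x})\e_i)^\top\bm\pi$. Interchanging this finite maximum with $\sup_{\bm\pi}$ — valid for a pointwise maximum of finitely many functions over a fixed set — yields $\max_{r\in\{-1,1\}}\max_{i\in[m_1]}\sup_{\bm\pi\in\Re^{\ell}_+,\ \W^\top\bm\pi=\bm{Q}\bxi_q+\bm{q}}(\bm{h}(\bm{x})-\T(\bm{x})\bfzeta_T^j+\theta r\T(\bm{x})\e_i)^\top\bm\pi$. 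Finally I would dualize each inner supremum by linear programming strong duality: the ``Sufficiently Expensive Recourse'' assumption guarantees its feasible region is nonempty, so the value equals that of $\min_{\bm{y}\in\Re^{n_2}}\{(\bm{Q}\bxi_q+\bm{q})^\top\bm{y}:\T(\bm{x})\bfzeta_T^j+\W\bm{y}-\theta r\T(\bm{x})\e_i\ge\bm{h}(\bm{x})\}$ (with the convention that an infeasible primal returns $+\infty$, matching an unbounded dual), which is exactly \eqref{sp_svm5}.

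The main obstacle I expect is the bookkeeping around the two interchanges — justifying that the $\bxi_T$-supremum passes inside because the constraint set factors, and that the discrete $\max_{(r,i)}$ commutes with $\sup_{\bm\pi}$ — and the careful treatment of the edge cases in which an inner supremum is $+\infty$, so that the claimed equality holds as an identity of extended-real-valued functions; the dual-norm computation and the LP duality step are otherwise routine.
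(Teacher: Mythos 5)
Your proposal is correct and follows essentially the same route as the paper's own proof: specialize \eqref{sp_svm2} to the fixed $\bxi_q$, optimize out $\bxi_T$ via the $\ell_1$--$\ell_\infty$ dual-norm identity to produce the penalty $\theta\|\T(\bm{x})^\top\bm\pi\|_\infty$, rewrite that norm as $\max_{r\in\{-1,1\}}\max_{i\in[m_1]} r\,\e_i^\top\T(\bm{x})^\top\bm\pi$, exchange the finite maximum with the supremum over $\bm\pi$, and apply LP strong duality to each of the $2m_1$ resulting linear programs. Your additional care about the two interchanges and the extended-real-valued edge cases is sound but does not change the argument.
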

\begin{subequations}
\begin{proof}
Since $\Xi=\{\bxi_q\}\times \Re^{m_2}$ and $p=1$, \eqref{sp_svm2} becomes
\begin{align}
	&\Z(\bm{x})= \frac{1}{N}\sum_{j\in [N]}\sup_{ \bm{\pi}\in \Re^{\ell}_+,\bxi_T}\left\{(\bm{h}(\bm{x})-\bm{T}(\bfx)\bxi_T)^\top\bm{\pi}:\|\bxi_T-\bfzeta_T^j\|_1\leq \theta, \bm{W}^\top \bm{\pi}= \bm{Q}\bxi_q+\bm{q} \right\},\label{sp_svm3_AO}
	\end{align} 
Above, optimizing $\bxi_T$ involving dual norm of $\|\cdot\|_1$, we have
\begin{align}
	&\Z(\bm{x})=\frac{1}{N}\sum_{j\in [N]}\sup_{ \bm{\pi}\in \Re^{\ell}_+}\left\{(\bm{h}(\bm{x})-\bm{T}(\bfx)\bfzeta_T^j)^\top\bm{\pi}+\theta\|\bm{T}(\bfx)^\top\bm\pi\|_{\infty}:\bm{W}^\top \bm{\pi}= \bm{Q}\bxi_q+\bm{q} \right\}.\label{sp_svm4_AO}
	\end{align}
Since
\[\|\bm{T}(\bfx)^\top\bm\pi\|_{\infty}=\max_{i\in [m_1]}\max\{(\bm{T}(\bfx)^\top\bm\pi)_i,-(\bm{T}(\bfx)^\top\bm\pi)_i\}\]
thus, \eqref{sp_svm4_AO} is further equivalent to
\begin{align}
	&\Z(\bm{x})=\frac{1}{N}\sum_{j\in [N]}\max_{r\in \{-1,1\}}\max_{i\in [m_1]}\sup_{ \bm{\pi}\in \Re^{\ell}_+}\left\{(\bm{h}(\bm{x})-\bm{T}(\bfx)\bfzeta_T^j)^\top\bm{\pi}+\theta r\e_i^\top\bm{T}(\bfx)^\top\bm\pi:\bm{W}^\top \bm{\pi}= \bm{Q}\bxi_q+\bm{q} \right\},\label{sp_svm4_AO1}
	\end{align} 
Taking the dual of inner supremum and using strong duality of linear programming, we arrive at \eqref{sp_svm5}.
  \qedA
\end{proof}
\end{subequations}

We make the following remarks about \Cref{thm_approximation1} and its corresponding formulation \eqref{sp_svm5}.
\begin{enumerate}[label=(\roman*)]
\item Clearly, since DRTSP with constraint uncertainty only is a special case of general DRTSP, thus the result from \Cref{thm_tractable1} directly follows and is not listed here;
\item \cite{hanasusanto2016conic} also proved that under the setting of \Cref{thm_approximation1}, DRTSP with $1$-Wasserstein ambiguity set is tractable. However, our formulation and required proof technique are quite different from theirs;
\item To obtain $\Z(\bfx)$, one needs to solve $2m_1$ linear programs for each $j\in [N]$;
\item If $\T(\bfx)\in \Re_+^{\ell\times m_1}$ or $\T(\bfx)\in \Re_-^{\ell\times m_1}$, then due to monotonicity, we must have optimal $r^*=1$ or $r^*=-1$, respectively. Thus, for these cases, one only needs to solve $m_1$ linear programs instead of $2m_1$ for each $j\in [N]$; and
\item The penalty term, $-\theta r\T(\bm x)\e_i$ in the constraints, enforces the robustness of the proposed model due to ambiguous distributional information. 
\end{enumerate}

In view of the result in \Cref{thm_approximation1}, we provide the following equivalent deterministic reformulation of DRTSP \eqref{sp}.
\begin{proposition}\label{thm_approximation1_deterministic}
Suppose that $\Xi= \{\bxi_q\}\times \Re^{m_2}$ and $p=1$. Then DRTSP \eqref{sp} is equivalent to 
\begin{subequations}
\begin{align}
v^*= \min_{\bm{x},\bm{\eta}}\quad& \bm{c}^{\top}\bm{x}+\frac{1}{N}\sum_{j\in [N]}\eta_j,\\
\textrm{s.t.   }	\quad&\eta_{j}\geq (\bm{Q}\bxi_q^j+\bm{q})^\top \bm{y}^{ijr},\forall j\in [N],\forall i\in [m_1],\forall r\in \{-1,1\},\\
&\bm{T}(\bfx)\bfzeta_T^j+\W\bm{y}^{ijr}-\theta r\T(\bm x)\e_i \geq \bm{h}(\bm{x}),\forall j\in [N],\forall i\in [m_1],\forall r\in \{-1,1\},\\
&\bm{x}\in \X,\bm{y}^{ijr}\in \Re^{n_2} ,\forall j\in [N],\forall i\in [m_1],\forall r\in \{-1,1\}.
	\end{align}
\end{subequations}
\end{proposition}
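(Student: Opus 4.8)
The plan is to obtain this deterministic reformulation by simply substituting the representation of $\Z(\bm x)$ from \Cref{thm_approximation1} into the master problem \eqref{sp} and then linearizing the resulting $\max$--$\max$--$\min$ structure by a standard epigraph construction. No new analytic input is needed beyond \Cref{thm_approximation1}; the work is entirely a sequence of order-of-operation manipulations on nested $\min$/$\max$ and the introduction of auxiliary variables.

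Concretely, I would first plug \eqref{sp_svm5} into the objective \eqref{sp-obj}, giving
\begin{align*}
v^* = \min_{\bm x\in\X}\ \bm c^\top\bm x + \frac1N\sum_{j\in[N]}\max_{r\in\{-1,1\}}\ \max_{i\in[m_1]}\ \min_{\bm y\in\Re^{n_2}}\Big\{(\bm Q\bxi_q+\bm q)^\top\bm y : \bm T(\bfx)\bfzeta_T^j+\W\bm y-\theta r\T(\bm x)\e_i\ge \bm h(\bm x)\Big\}.
\end{align*}
For each $j\in[N]$ I would introduce an epigraph variable $\eta_j$ for the bracketed inner quantity, rewriting the average as $\tfrac1N\sum_j\eta_j$ together with $\eta_j\ge \max_{r}\max_{i}\min_{\bm y}\{\cdots\}$; since $\eta_j$ carries the positive coefficient $1/N$ in the objective, at optimality it is driven down to equal that quantity, so the step is lossless. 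Next I would peel off the two maxima: $\eta_j\ge\max_r\max_i(\cdot)$ is equivalent to the family of constraints $\eta_j\ge\min_{\bm y}\{(\bm Q\bxi_q+\bm q)^\top\bm y : \bm T(\bfx)\bfzeta_T^j+\W\bm y-\theta r\T(\bm x)\e_i\ge\bm h(\bm x)\}$, one for each $i\in[m_1]$, $r\in\{-1,1\}$. Finally I would eliminate each inner minimization via the epigraph identity $\eta_j\ge\min_{\bm y}f(\bm y)\iff\exists\,\bm y:\eta_j\ge f(\bm y)$: introduce one copy $\bm y^{ijr}\in\Re^{n_2}$ of the recourse variables per triple $(i,j,r)$, impose $\bm T(\bfx)\bfzeta_T^j+\W\bm y^{ijr}-\theta r\T(\bm x)\e_i\ge\bm h(\bm x)$ and the cut $\eta_j\ge(\bm Q\bxi_q+\bm q)^\top\bm y^{ijr}$. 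Substituting back into the master problem (keeping $\bm x\in\X$) produces precisely the program in the statement. If for some $(i,j,r)$ the inner LP is infeasible, no feasible $\bm y^{ijr}$ exists and the associated $\bm x$ is excluded in the reformulation, which matches the fact that $\Z(\bm x)=+\infty$ for such $\bm x$ in \eqref{sp_svm5}, so nothing is lost.

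The only point I would treat with care, rather than the routine algebra, is the validity of the identity $\eta_j\ge\min_{\bm y}f(\bm y)\iff\exists\,\bm y:\eta_j\ge f(\bm y)$, which presumes that whenever the inner infimum is finite it is in fact attained. This is where the standing \emph{sufficiently expensive recourse} assumption enters: for every $\bm x\in\X$ the dual of \eqref{recourse} is feasible for all $\bxi\in\Xi$, hence by linear programming duality the perturbed inner problem appearing in \eqref{sp_svm5} is never unbounded, so it is either infeasible or attains an optimal solution, and the epigraph identity applies. All remaining manipulations are standard reorderings of $\min$/$\max$ and introductions of auxiliary variables, so I expect no further obstacles.
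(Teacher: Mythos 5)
Your proof is correct and takes essentially the same route the paper intends: the paper presents this proposition as an immediate consequence of \Cref{thm_approximation1}, obtained by substituting \eqref{sp_svm5} into \eqref{sp} and applying exactly the epigraph linearization of the nested $\max$--$\max$--$\min$ that you describe. Your explicit justification of attainment of the inner minimum via the sufficiently expensive recourse assumption (dual feasibility is unaffected by the perturbed right-hand side, so the inner LP is never unbounded) is a sound addition that the paper leaves implicit.
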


Another special case of DRTSP without objective uncertainty is that the dual constraint system of \eqref{recourse} is bounded and has a small number of extreme points. In this case, equivalently, we can represent the recourse function in the form of piece-wise max of a finite number of affine functions in the random parameters, and obtain the tractable reformulation for any reference distance $\|\cdot\|_p$ for any $p\in [1,\infty]$. This result is summarized below.
\begin{proposition}\label{prop_max}Suppose that $\Xi=\Re^{\tau}$ and $z(\bm{x})=\sup_{\Pr\in \P}\E_{\Pr}[\max_{i\in [m]}\{\bm{a}_i(\bm{x})^\top \bm{\xi}+d_i(\bm{x})\}]$ with affine functions $\bm{a}_i(\bm{x}):\Re^{n_1}\rightarrow\Re^\tau$ and $d_i(\bm{x}):\Re^{n_1}\rightarrow\Re$ for each $i\in [m]$. Then 
\begin{itemize}
\item  Function $z(\bm{x})$ is equivalent to
\begin{align}\label{eq_rhs_uncertainty}
z(\bm{x})=\frac{1}{N}\sum_{j\in [N]}\max_{i\in [m]}\left[\bm{a}_i(\bm{x})^\top \hat{\bm{\zeta}}^j+d_i+\theta\|\bm{a}_i(\bm{x})\|_{p*}\right].
\end{align}
\item DRTSP \eqref{sp} is equivalent to 
\begin{subequations}\label{eq_reform_drtsp}
\begin{align}
v^*= \min_{\bm{x},\bm{\eta}}\quad& \bm{c}^{\top}\bm{x}+\frac{1}{N}\sum_{j\in [N]}\eta_j,\\
\textrm{s.t.   }	\quad&\eta_j\geq \bm{a}_i(\bm{x})^\top \hat{\bm{\zeta}}^j+d_i+\theta\|\bm{a}_i(\bm{x})\|_{p*} ,\forall j\in [N],\forall i\in [m],\\
&\bm{x}\in \X.
	\end{align}
\end{subequations}
\end{itemize}

\end{proposition}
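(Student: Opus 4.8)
The plan is to mirror the argument behind \Cref{lem_equivalent}: invoke the strong duality of distributionally robust optimization under the $\infty-$Wasserstein ambiguity set, and then reduce the resulting worst-case expectation to a family of elementary linear maximizations over norm balls. Note that, unlike the reformulations in the previous subsections, this proposition takes the recourse already in the explicit piecewise-affine form $\max_{i\in[m]}\{\bm a_i(\bfx)^\top\bxi+d_i(\bfx)\}$ (which is exactly the value of \eqref{recourse} when the dual feasible region is a bounded polytope with vertices indexing the affine pieces), so the second-stage strong duality invoked in \Cref{lem_equivalent} is not needed here.

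First I would apply Theorem~5 in \cite{bertsimas2018data}. The integrand $\bxi\mapsto\max_{i\in[m]}\{\bm a_i(\bfx)^\top\bxi+d_i(\bfx)\}$ is a finite maximum of affine functions, hence convex, Lipschitz, and upper semicontinuous on the closed support $\Xi=\Re^\tau$, so the hypotheses of that theorem hold and
\[ z(\bfx)=\frac1N\sum_{j\in[N]}\sup_{\bxi\in\Re^\tau:\,\|\bxi-\hat{\bm{\zeta}}^j\|_p\le\theta}\max_{i\in[m]}\left\{\bm a_i(\bfx)^\top\bxi+d_i(\bfx)\right\}. \]
Because $\Xi=\Re^\tau$, the constraint $\bxi\in\Xi$ is vacuous, so no extra restriction on $\bxi$ survives in the per-sample subproblems; each such subproblem maximizes a continuous function over a compact $p$-norm ball, so the supremum is attained.

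Next, for each fixed $j$ I would interchange $\sup_{\bxi}$ with $\max_{i\in[m]}$ — legitimate since the index set $[m]$ is finite — to obtain $\frac1N\sum_{j\in[N]}\max_{i\in[m]}\sup_{\|\bxi-\hat{\bm{\zeta}}^j\|_p\le\theta}\{\bm a_i(\bfx)^\top\bxi+d_i(\bfx)\}$. Substituting $\bxi=\hat{\bm{\zeta}}^j+\bm u$ with $\|\bm u\|_p\le\theta$, the innermost problem equals $\bm a_i(\bfx)^\top\hat{\bm{\zeta}}^j+d_i(\bfx)+\sup_{\|\bm u\|_p\le\theta}\bm a_i(\bfx)^\top\bm u$, and the definition of the dual norm (Property~(i)) gives $\sup_{\|\bm u\|_p\le\theta}\bm a_i(\bfx)^\top\bm u=\theta\|\bm a_i(\bfx)\|_{p*}$. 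This yields \eqref{eq_rhs_uncertainty}. For the DRTSP reformulation, I would substitute \eqref{eq_rhs_uncertainty} into \eqref{sp} and introduce one epigraph variable $\eta_j$ per sample, replacing $\frac1N\sum_j\max_{i\in[m]}[\cdots]$ by $\min\frac1N\sum_j\eta_j$ subject to $\eta_j\ge\bm a_i(\bfx)^\top\hat{\bm{\zeta}}^j+d_i+\theta\|\bm a_i(\bfx)\|_{p*}$ for all $i\in[m],\,j\in[N]$; this is exact since at optimality each $\eta_j$ equals the corresponding maximum, giving \eqref{eq_reform_drtsp}.

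I do not expect a genuine obstacle: the only point requiring care is verifying that the hypotheses of the $\infty-$Wasserstein strong-duality theorem are met, which is immediate for a finite maximum of affine functions, together with the observation that $\Xi=\Re^\tau$ removes any support constraint from the per-sample problems. The remaining steps — the max/sup interchange over a finite index set, the dual-norm identity for linear maximization over a ball, and the epigraph reformulation of a sum of pointwise maxima — are all routine.
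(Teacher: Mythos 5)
Your proposal is correct and follows essentially the same route as the paper: both start from the per-sample worst-case form \eqref{sp_svm1} given by the $\infty$-Wasserstein strong duality of \cite{bertsimas2018data}, exchange the supremum with the finite maximum over $i\in[m]$, evaluate the inner linear maximization via the dual norm, and finish with an epigraph linearization. The only differences are that you make explicit the hypothesis-checking and the sup/max interchange that the paper leaves implicit.
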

\begin{proof}
Since $\Xi=\Re^{\tau}$ and $Z(\bfx,\bxi)=\max_{i\in [m]}\{\bm{a}_i(\bm{x})^\top \bm{\xi}+d_i(\bm{x})\}$, \eqref{sp_svm1} becomes
\begin{align*}
	&\Z(\bm{x})= \frac{1}{N}\sum_{j\in [N]}\max_{i\in [m]}\sup_{ \bxi}\left\{\bm{a}_i(\bm{x})^\top \bm{\xi}+d_i(\bm{x}):\|\bxi-\bfzeta^j\|_p\leq \theta\right\},
	\end{align*} 
Above, optimizing $\bxi$ using dual norm of $\|\cdot\|_p$, we arrive at \eqref{eq_rhs_uncertainty}.

The formulation \eqref{eq_reform_drtsp} follows from a straightforward linearization.
  \qedA
\end{proof}

We will illustrate the proposed formulation in \Cref{thm_approximation1_deterministic} using \Cref{example1}, where we assume that there is no demand uncertainty, i.e., the only uncertain parameters are facility disruptions.
\begin{example}\label{example3}\rm Following the notation in \Cref{example1}, let us consider DR-RFLP with disruption risks only, i.e., the random parameters $\tilde{\bm d}$ satisfy $\Pr\{\tilde{\bm d}=\bm d\}=1$.

Suppose the reference distance is $\|\cdot\|_{1}$ and the support of $\tilde{\bm{\xi}}$ is $\Re^{n_1}\times \{\bm d\}$. 
According to \Cref{thm_approximation1_deterministic}, DR-FRLP with disruption risks can be equivalently formulated as the following MILP:
\begin{subequations}\label{DR-FRLP13}
\begin{align}
v^*= \min_{\bm{x},\bm{y}} & \ \ \bm{f}^{\top}\bm{x}+\frac{1}{N}\sum_{j\in [N]}\eta_j, \label{RFLP_sp-obj13} \\
\rm{s.t. } 
&\ \  \eta_j\geq \sum_{t\in [\ell]}\sum_{s\in [n_1+1]}c_{ts}d_t y_{ts}^{ijr}, \forall j\in [N],\forall i\in [n_1],\forall r\in \{-1,1\},\\
& \ \ \sum_{s\in [n_1+1]}y_{ts}^{ijr}=1, \forall j\in [N],\forall t\in [\ell],\forall i\in [n_1],\forall r\in \{-1,1\},\\
&\ \ y_{ts}^{ijr} \leq \hat{\delta}_s^jx_s-\I(s=i)\theta r x_s, \forall j\in [N],\forall t\in [\ell],\forall i\in [n_1],\forall s\in [n_1],\forall r\in \{-1,1\}, \label{RFLP_sp-det_2nd213}\\
& \ \ \bm{x} \in \{0,1\}^{n_1}, \bm{y}^{ijr}\in \Re_+^{\ell\times n_1},\forall j\in [N],\forall i\in [n_1],\forall r\in \{-1,1\}. \label{RFLP_sp-det13}
\end{align}
\end{subequations}
Note that due to monotonicity, in the above formulation, the optimal $r^*=1$. Thus, DR-FRLP with disruption risks can be further simplified as
\begin{subequations}\label{DR-FRLP132}
\begin{align}
v^*= \min_{\bm{x},\bm{y}} & \ \ \bm{f}^{\top}\bm{x}+\frac{1}{N}\sum_{j\in [N]}\eta_j, \label{RFLP_sp-obj132} \\
\rm{s.t. } 
&\ \  \eta_j\geq \sum_{t\in [\ell]}\sum_{s\in [n_1+1]}c_{ts}d_t y_{ts}^{ij}, \forall j\in [N],\forall i\in [n_1],\\
& \ \ \sum_{s\in [n_1+1]}y_{ts}^{ij}=1, \forall j\in [N],\forall t\in [\ell],\forall i\in [n_1],\\
&\ \ y_{ts}^{ij} \leq \hat{\delta}_s^jx_s-\I(s=i)\theta x_s, \forall j\in [N],\forall t\in [\ell],\forall i\in [n_1],\forall s\in [n_1], \label{RFLP_sp-det_2nd2132}\\
& \ \ \bm{x} \in \{0,1\}^{n_1}, \bm{y}^{ij}\in \Re_+^{\ell\times n_1},\forall j\in [N],\forall i\in [n_1]. \label{RFLP_sp-det132}
\end{align}
\end{subequations}
\QEDA
\end{example}

\subsection{Complexity Analysis} 

We close this section by showing that for general reference distance $\|\cdot\|_{p}$ with $p\in(1,\infty]$, computing the function $\Z(\bm{x})$ with $N=1$ is NP-hard.
\begin{proposition}\label{prop_complexity}
Computing $\Z(\bm{x})$ is NP-hard whenever 
the reference distance is $\|\cdot\|_{p}$ with any $p\in(1,\infty]$, $N=1$, $\Xi= \{\bxi_q\}\times \Re^{m_2}$, $\bm{h}(\bm{x})=\bm{0},\bfzeta_T^1=\bm{0}$, and Wasserstein radius $\theta>0$. 

\end{proposition}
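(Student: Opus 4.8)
The plan is to reduce \textsc{Max-Cut} to the evaluation of $\Z(\bm x)$, by first showing that in the regime named in the proposition $\Z(\bm x)$ is exactly a norm-maximization over a polyhedron, and then choosing the data so that this becomes ``maximize a $(p*)$-norm over the unit cube'', which encodes a cut. First I would specialize Lemma~\ref{lem_equivalent}: with $N=1$, $\Xi=\{\bxi_q\}\times\Re^{m_2}$, $\bm h(\bm x)=\bm 0$ and $\bfzeta_T^1=\bm 0$, the vector $\bm Q\bxi_q+\bm q$ is a fixed vector $\bm b$, so \eqref{sp_svm2} collapses to
\[
\Z(\bm x)=\sup_{\bm\pi\ge\bm 0,\ \bm W^\top\bm\pi=\bm b}\ \sup_{\|\bxi_T\|_p\le\theta}\bigl(-\bxi_T^\top\bm T(\bm x)^\top\bm\pi\bigr)=\theta\sup_{\bm\pi\ge\bm 0,\ \bm W^\top\bm\pi=\bm b}\bigl\|\bm T(\bm x)^\top\bm\pi\bigr\|_{p*},
\]
where $p*=p/(p-1)\in[1,\infty)$ and the second equality uses symmetry of the norm ball and the definition of the dual norm (Property~(i)). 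Thus evaluating $\Z(\bm x)$ is a convex maximization of the $(p*)$-norm of a linear image over a polytope, which is the source of hardness.

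Next I would engineer the data. Given a graph $G=(V,E)$ with $|V|=n$, let $B\in\{-1,0,1\}^{|E|\times n}$ be a signed incidence matrix, so $(B\bm\rho)_{\{u,v\}}=\rho_u-\rho_v$. Use dual variables $\bm\pi=(\bm\rho,\bm\sigma)\in\Re_+^{2n}$ constrained by $\bm\rho+\bm\sigma=\bm 1$; concretely take $\ell=2n$, $\bm W=\bigl[\begin{smallmatrix}\bm I_n\\ \bm I_n\end{smallmatrix}\bigr]$, $n_2=n$, $\bm q=\bm 1_n$, $m_1=1$, $\bxi_q=0$, $\bm Q=\bm 0$ (so $\bm b=\bm 1_n$), $m_2=|E|$, $\bm T(\bm x)^\top=[\,B\ \ \bm 0_{|E|\times n}\,]$ (so $\bm T(\bm x)^\top\bm\pi=B\bm\rho$), $\bm h(\bm x)\equiv\bm 0_{2n}$, $\bfzeta_T^1=\bm 0_{|E|}$, and $\bm c=\bm 0$, $\X=\{\bm 0\}$; these are all built in polynomial time and the Sufficiently Expensive Recourse assumption holds since $\{\bm\pi\ge\bm 0:\bm W^\top\bm\pi=\bm 1\}\ni(\tfrac12\bm 1,\tfrac12\bm 1)$ is nonempty and independent of $\bxi$. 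Projecting out $\bm\sigma$ (which ranges so that $\bm\rho\in[0,1]^n$) gives
\[
\Z(\bm x)=\theta\max_{\bm\rho\in[0,1]^n}\|B\bm\rho\|_{p*}=\theta\Bigl(\max_{\bm\rho\in[0,1]^n}\textstyle\sum_{\{u,v\}\in E}|\rho_u-\rho_v|^{p*}\Bigr)^{1/p*}.
\]
Since $t\mapsto|t|^{p*}$ is convex for $p*\ge 1$, the inner objective is convex, hence attains its maximum over $[0,1]^n$ at an extreme point $\bm\rho\in\{0,1\}^n$, where $|\rho_u-\rho_v|^{p*}=\I(\rho_u\neq\rho_v)$; the inner maximum therefore equals the maximum cut value of $G$, so $\Z(\bm x)=\theta\,(\mathrm{maxcut}(G))^{1/p*}$.

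Finally I would conclude: the map $t\mapsto\theta\,t^{1/p*}$ is strictly increasing, so an oracle for $\Z(\bm x)$ recovers $\mathrm{maxcut}(G)$ (equivalently decides whether $\mathrm{maxcut}(G)\ge k$) in polynomial time; since \textsc{Max-Cut} is NP-hard, computing $\Z(\bm x)$ is NP-hard. For $p=\infty$ (i.e.\ $p*=1$) one even obtains $\Z(\bm x)=\theta\,\mathrm{maxcut}(G)$ exactly, which is the cleanest version. The main thing to get right is that the construction stays strictly inside the stated parameter regime (only constraint uncertainty, $\bm h(\bm x)=\bm 0$, $\bfzeta_T^1=\bm 0$, $N=1$) while still realizing the full cube-plus-incidence-matrix maximization; the dual-norm simplification and the ``convex maximum at a vertex'' step are routine. (If one prefers to work with $p*=2$ directly, an equivalent gadget uses $\bm z^\top L\bm z=\|B\bm z\|_2^2$ with $L$ the Laplacian and the affine substitution $\bm z=2\bm\rho-\bm 1$, exploiting $B\bm 1=\bm 0$.)
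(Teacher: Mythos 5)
Your proof is correct, and it reaches the result by a genuinely different reduction than the paper's. Both arguments pass through the same pivot — after eliminating $\bxi_T$ via the dual norm, evaluating $\Z(\bm{x})$ becomes $\theta\sup_{\bm\pi\in P}\|\bm{T}(\bm{x})^\top\bm\pi\|_{p*}$ for a polytope $P$, with $p*=p/(p-1)\in[1,\infty)$ exactly when $p\in(1,\infty]$ — but the gadgets diverge from there. The paper reduces from feasibility of a general binary program: it takes $\bm{T}(\bm{x})^\top\bm\pi=\bm{r}-\bm{s}$ over $\{\bm{A}\bm{r}=\bm{b},\ \bm{r}+\bm{s}=\e,\ \bm{r},\bm{s}\geq\bm{0}\}$, so that $\|\bm{r}-\bm{s}\|_{p*}$ attains its ceiling $\sqrt[p*]{t_2}$ if and only if every coordinate pair is binary, turning the evaluation into a yes/no feasibility test (this is also where $p>1$ is essential: for $p*=\infty$ a single binary coordinate would already saturate the norm). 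You instead reduce from \textsc{Max-Cut}, taking $\bm{T}(\bm{x})^\top\bm\pi=B\bm\rho$ over the cube $[0,1]^n$ and invoking the fact that a convex function attains its maximum over a polytope at a vertex, so $\Z(\bm{x})=\theta\,(\mathrm{maxcut}(G))^{1/p*}$. Your route buys a little more: an exact-value oracle recovers the optimal cut, so (given the APX-hardness of \textsc{Max-Cut}) it also rules out too-accurate approximation of $\Z(\bm{x})$, and the "convex maximization over a polytope" framing makes the structural source of hardness explicit. The paper's route is a slightly leaner threshold argument and is reused almost verbatim for the binary-support complexity result later on. One shared cosmetic caveat, not a gap: for irrational $p*$ the target values $\theta\sqrt[p*]{t_2}$ (paper) and $\theta\,k^{1/p*}$ (yours) are irrational, so the oracle comparison should formally be phrased with sufficient precision; this affects both proofs equally. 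Your dimension bookkeeping, the verification of the sufficiently-expensive-recourse assumption, and the confinement to the stated regime ($N=1$, $\Xi=\{\bxi_q\}\times\Re^{m_2}$, $\bm{h}(\bm{x})=\bm{0}$, $\bfzeta_T^1=\bm{0}$, $\theta>0$) all check out.
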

\begin{proof}Let us first consider the NP-complete problem - feasibility problem of a general binary program which asks
\begin{quote}
	{(Feasibility problem of a general binary program)} Given a matrix $\bm{A}\in \Qe^{t_1\times t_2}$ and a vector $\bm{b}\in \Qe^{t_1}$, is there exists a binary vector $\bm{r}\in \{0,1\}^{t_2}$ such that $\bm{A}\bm{r}=\bm{b}$?
\end{quote}

In the representation \eqref{sp_svm2} of the function $\Z(\bm{x})$, let $\ell=2t_2,n_2=m_2=t_1+t_2$, and $\T(\bm{x})=\begin{bmatrix}
\bm{I}_{t_2}\\
-\bm{I}_{t_2}
\end{bmatrix},\bm{W}^\top=\begin{bmatrix}
\bm{A}&\bm{0}\\
\bm{I}_{t_2}&\bm{I}_{t_2}
\end{bmatrix},  \bm{Q}=\bm{0},\q=\begin{pmatrix}
\bm{b}\\
\e
\end{pmatrix},\bm{\pi}=\begin{pmatrix}
\bm{r}\\
\bm{s}
\end{pmatrix}$. Since $N=1$, $\Xi= \{\bxi_q\}\times \Re^{m_2},\bfzeta_T^1=\bm{0}$, and $\theta>0$, according to the proof of \Cref{thm_approximation1}, $\Z(\bm{x})$ becomes
\begin{align}
	&\Z(\bm{x})=\sup_{ \bm{r}\in \Re^{t_2}_+,\bm{s}\in \Re^{t_2}_+}\left\{\theta\left\|\begin{pmatrix}
\bm{r}-
\bm{s}
\end{pmatrix}\right\|_{p*}:\bm{A}\bm{r}=\bm{b},\bm{r}+\bm{s}=\e \right\}.\label{sp_svm4_c}
	\end{align}
Since $p\in (1,\infty]$ and $p*=\frac{p}{p-1}\in [1,\infty)$, thus clearly, $\Z(\bm{x})=\theta\sqrt[p*]{t_2}$ if and only if there exists a binary feasible solution $(\bm{r},\bm{s})\in \{0,1\}^{t_2}\times \{0,1\}^{t_2}$ such that $\bm{A}\bm{r}=\bm{b},\bm{r}+\bm{s}=\e $, i.e., the binary program $\{\bm{r}\in \{0,1\}^{t_2}:\bm{A}\bm{r}=\bm{b}\}$ is feasible.

\exclude{\item In the representation \eqref{sp_svm2} of the function $\Z(\bm{x})$, let $\ell=2t_2,n_2=t_1+3t_2,m_1=m_2=2t_2, \bxi_q=\bm{\pi}=\begin{pmatrix}
\bm{\xi}_{qr}\\
\bm{\xi}_{qs}
\end{pmatrix}, \bxi_T=\bm{\pi}=\begin{pmatrix}
\bm{\xi}_{Tr}\\
\bm{\xi}_{Ts}
\end{pmatrix}$, and $\T(\bm{x})=-\bm{I}_{2t_2},\bm{W}^\top=\begin{bmatrix}
\bm{A}&\bm{0}\\
\bm{I}_{t_2}&\bm{I}_{t_2}\\
\bm{I}_{t_2}&\bm{0}\\
\bm{0}&\bm{I}_{t_2}
\end{bmatrix},  \bm{Q}=\begin{bmatrix}
\bm{0}&\bm{0}\\
\bm{0}&\bm{0}\\
\bm{I}_{t_2}&\bm{0}\\
\bm{0}&\bm{I}_{t_2}
\end{bmatrix},\q=\begin{pmatrix}
\bm{b}\\
\e\\
\bm{0}
\\
\bm{0}
\end{pmatrix},\bm{\pi}=\begin{pmatrix}
\bm{r}\\
\bm{s}
\end{pmatrix} ,\theta=2t_2$. Since $p=1, N=1$, $\Xi= \Re^{m_1}\times \Re^{m_2}$, and $\bm{h}(\bm{x})=\bm{0}$, according to the proof of \Cref{thm_approximation1}, $\Z(\bm{x})$ becomes
\begin{align}
	&\Z(\bm{x})=\sup_{ \bm{r}\in \Re^{t_2}_+,\bm{s}\in \Re^{t_2}_+,\bm{\xi}_{Tr},\bm{\xi}_{Ts}}\left\{\bm{\xi}_{Tr}^\top \bm{r}+\bm{\xi}_{Ts}^\top \bm{s}:\bm{A}\bm{r}=\bm{b},\bm{r}+\bm{s}=\e, \|\bm{\xi}_{Tr}\|_1+\|\bm{\xi}_{Ts}\|_1+ \|\bm{r}\|_1+\|\bm{s}\|_1 \leq 2t_2\right\}.\label{sp_svm4_c2}
	\end{align}
since $\bm{r}\in \Re^{t_2}_+,\bm{s}\in \Re^{t_2}_+$ and $\bm{r}+\bm{s}=\e$, thus $\|\bm{r}\|_1+\|\bm{s}\|_1=t_2$. Thus, \eqref{sp_svm4_c2} now becomes
\begin{align}
	&\Z(\bm{x})=\sup_{ \bm{r}\in \Re^{t_2}_+,\bm{s}\in \Re^{t_2}_+,\bm{\xi}_{Tr},\bm{\xi}_{Ts}}\left\{\bm{\xi}_{Tr}^\top \bm{r}+\bm{\xi}_{Ts}^\top \bm{s}:\bm{A}\bm{r}=\bm{b},\bm{r}+\bm{s}=\e, \|\bm{\xi}_{Tr}\|_1+\|\bm{\xi}_{Ts}\|_1\leq t_2\right\}.\label{sp_svm4_c3}
	\end{align}

Since $p\in (1,\infty]$ and $p*=\frac{p}{p-1}\in [1,\infty)$, thus clearly, $\Z(\bm{x})=\theta\sqrt[p*]{t_2}$ if and only if there exists a binary feasible solution $(\bm{r},\bm{s})\in \{0,1\}^{t_2}\times \{0,1\}^{t_2}$ such that $\bm{A}\bm{r}=\bm{b},\bm{r}+\bm{s}=\e $, i.e., the binary program $\{\bm{r}\in \{0,1\}^{t_2}:\bm{A}\bm{r}=\bm{b}\}$ is feasible.
}
\qedA
\end{proof}
This result suggests that unless exploring special problem structures, the tractable results obtained in this section are sharp.

\section{Binary Support: Tractable Reformulations and Complexity Analysis}\label{sec_bin}

In practice, some stochastic programming applications might involve binary random parameters. For instance, \Cref{example1}, the disruption parameters are in fact binary, i.e., $\Pr\{\tilde{\bm{\delta}}\in \{0,1\}^{n_1}\}=1$; in the stochastic power systems with contingencies \citep{wang2012stochastic,wu2014chance}, the availability of a system component is also binary supported. Motivated by these applications, in this section, we explore the tractable representations of the function $\Z(\bm{x})$ when one of random parameters $\tilde{\bxi}_q,\tilde{\bxi}_T$ is binary, i.e., we consider either $\tilde{\bxi}_q\in \{0,1\}^{m_1}$ or $\tilde{\bxi}_T\in \{0,1\}^{m_2}$, and the other random parameters are continuous.

\subsection{Tractable Reformulation I: General DRTSP with $L_\infty$ Reference Distance}

For the general DRTSP with objective uncertainty, the function $\Z(\bm{x})$ has a tractable representation given that the reference distance is $\|\cdot\|_p=\|\cdot\|_\infty$ (i.e., $p=\infty$).
\begin{theorem}\label{D_thm_tractable1}
Suppose $p=\infty$ and $\T(\bfx)\in \Re_+^{\ell\times m_1}$ or $\T(\bfx)\in \Re_-^{\ell\times m_1}$.
\begin{enumerate}[label=(\roman*)]
\item If $\Xi=\Re^{m_1}\times \{0,1\}^{m_2}$, then the function $\Z(\bm{x})$ is equivalent to 
\begin{align}
	&\Z(\bm{x})=\begin{cases}
\displaystyle\frac{1}{N}\displaystyle\sum_{j\in [N]}\displaystyle\min_{ \bm{y}\in \Re^{n_2}}	\left\{(\bm{Q}\bfzeta_q^j+\bm{q})^\top \bm{y}+\theta \|\bm{Q}^\top\bm{y}\|_1:-(-\bm{T}(\bfx))_+\e+\W\bm{y}\geq \bm{h}(\bm{x}) \right\},&\text{ if } \theta\geq 1\\
\displaystyle\frac{1}{N}\displaystyle\sum_{j\in [N]}\displaystyle\min_{ \bm{y}\in \Re^{n_2}}	\left\{(\bm{Q}\bfzeta_q^j+\bm{q})^\top \bm{y}+\theta \|\bm{Q}^\top\bm{y}\|_1:\bm{T}(\bfx)\bfzeta^j_T+\W\bm{y}\geq \bm{h}(\bm{x}) \right\},&\text{ if } \theta<1
\end{cases}; \label{D_sp_svm81}
	\end{align}
\item If $\Xi=\{0,1\}^{m_1}\times \Re^{m_2}$ and
the polyhedron 
$\left\{(\bm\pi,\bm\xi_q)\in \Re_+^\ell\times [0,1]^{m_1}: \bm{W}^\top \bm{\pi}= \bm{Q}\bxi_q+\bm{q}\right\}$
is integral, then the function $\Z(\bm{x})$ is equivalent to 
\begin{align}
	&\Z(\bm{x})=\begin{cases}
\displaystyle\frac{1}{N}\displaystyle\sum_{j\in [N]}\displaystyle\min_{ \bm{y}\in \Re^{n_2}}	\left\{(\bm{Q}\bfzeta_q^j+\bm{q})^\top \bm{y}:\bm{T}(\bfx)\bfzeta^j_T+\W\bm{y}-\theta|\bm{T}(\bfx)|\e\geq \bm{h}(\bm{x}) \right\},&\text{ if } \theta\geq 1\\
\displaystyle\frac{1}{N}\displaystyle\sum_{j\in [N]}\displaystyle\min_{ \bm{y}\in \Re^{n_2}}	\left\{
(\bm{Q}\bfzeta_q^j+\bm{q})^\top \bm{y}+\e^\top(\bm{Q}^\top\bfy)_+:\bm{T}(\bfx)\bfzeta^j_T+\W\bm{y}-\theta|\bm{T}(\bfx)|\e\geq \bm{h}(\bm{x})
\right\},&\text{ if } \theta<1
\end{cases}. \label{D_sp_svm812}
	\end{align}

\end{enumerate}
\end{theorem}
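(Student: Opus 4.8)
The plan is to start from the bilinear representation of $\Z(\bfx)$ in \Cref{lem_equivalent}, that is \eqref{sp_svm2} (equivalently the sample-wise form \eqref{sp_svm1}), and to eliminate the binary block of $\tilde\bxi$ by a dichotomy on the Wasserstein radius. The new feature relative to \Cref{thm_tractable1} and \Cref{thm_tractable1_AC} is that the inner feasible set $\{\bxi\in\Xi:\|\bxi-\bfzeta^j\|_\infty\le\theta\}$ is no longer convex, because one of $\bxi_q,\bxi_T$ is now confined to $\{0,1\}$. The elementary fact that organizes the whole argument: if $\bm b$ and $\bfzeta^j$ are binary vectors of the same length, then $\|\bm b-\bfzeta^j\|_\infty\in\{0,1\}$, so $\|\bm b-\bfzeta^j\|_\infty\le\theta$ holds \emph{iff} either $\theta\ge1$, in which case every binary $\bm b$ qualifies, or $\theta<1$, in which case only $\bm b=\bfzeta^j$ qualifies. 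Thus the binary block is either ``frozen'' at $\bfzeta^j$ or ``free'' over the whole cube, with nothing in between, and this is exactly why the two stated formulas split at $\theta=1$.

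For part (i), with $\Xi=\Re^{m_1}\times\{0,1\}^{m_2}$: when $\theta<1$ the block $\bxi_T$ is frozen at $\bfzeta_T^j$, so \eqref{sp_svm2} collapses to the objective-uncertainty-only situation with constraint datum $\bfzeta_T^j$, and \Cref{thm_tractable1_AC} with $p=\infty$ (hence $p*=1$) delivers the second line of \eqref{D_sp_svm81} verbatim. When $\theta\ge1$ the block $\bxi_T$ ranges over all of $\{0,1\}^{m_2}$ and enters the dual objective of \eqref{sp_svm2} linearly through $(-\T(\bfx)\bxi_T)^\top\bm\pi$ with $\bm\pi\ge\bm0$; using sign-definiteness of $\T(\bfx)$ one gets $\max_{\bxi_T\in\{0,1\}^{m_2}}(-\T(\bfx)\bxi_T)^\top\bm\pi=\big((-\T(\bfx))_+\e\big)^\top\bm\pi$, which is again \emph{linear} in $\bm\pi$. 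Absorbing it into $\bm h(\bfx)$ gives the objective-uncertainty-only form with right-hand side $\bm h(\bfx)+(-\T(\bfx))_+\e$, and rerunning the mechanism of \Cref{thm_tractable1_AC} (dual norm of $\|\cdot\|_\infty$, then LP duality on the equality $\bm W^\top\bm\pi=\bm Q\bxi_q+\bm q$) produces the first line of \eqref{D_sp_svm81}.

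For part (ii), with $\Xi=\{0,1\}^{m_1}\times\Re^{m_2}$: here I would first dispose of the \emph{continuous} block $\bxi_T$ exactly as in the proof of \Cref{thm_tractable1}, i.e.\ maximize over $\|\bxi_T-\bfzeta_T^j\|_\infty\le\theta$ via the dual norm and use sign-definiteness of $\T(\bfx)$ to rewrite $\|\T(\bfx)^\top\bm\pi\|_1=(|\T(\bfx)|\e)^\top\bm\pi$, turning the dual objective into $(\bm h(\bfx)-\T(\bfx)\bfzeta_T^j+\theta|\T(\bfx)|\e)^\top\bm\pi$ and making the $\bxi_T$-ball vanish. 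What remains is $\sup\{(\cdot)^\top\bm\pi:\bm\pi\ge\bm0,\ \bxi_q\in\{0,1\}^{m_1},\ \bm W^\top\bm\pi=\bm Q\bxi_q+\bm q\}$, with the ball on $\bxi_q$ either vacuous ($\theta\ge1$) or forcing $\bxi_q=\bfzeta_q^j$ ($\theta<1$). In the frozen subcase, LP duality on the equality directly gives the reformulation with second-stage cost $(\bm Q\bfzeta_q^j+\bm q)^\top\bm y$. In the free subcase, the integrality hypothesis is invoked: since the objective is linear and independent of $\bxi_q$, the supremum over the binary set equals the supremum over the polyhedron $\{(\bm\pi,\bxi_q)\in\Re_+^\ell\times[0,1]^{m_1}:\bm W^\top\bm\pi=\bm Q\bxi_q+\bm q\}$ --- a linear program attains its optimum at a vertex, and by assumption every vertex of this polyhedron is integral, hence has $\bxi_q\in[0,1]^{m_1}\cap\Ze^{m_1}=\{0,1\}^{m_1}$ --- after which LP duality on the resulting (bounded) program, with multiplier $\bm y$ for the equality and the box $\bm0\le\bxi_q\le\e$ contributing the penalty $\e^\top(\bm Q^\top\bm y)_+$, yields the stated line of \eqref{D_sp_svm812}.

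The genuinely non-routine step is the ``free'' subcase of part (ii): one must justify that replacing $\bxi_q\in\{0,1\}^{m_1}$ by $\bxi_q\in[0,1]^{m_1}$ is \emph{exact}, which rests precisely on (a) the inner objective being linear in $(\bm\pi,\bxi_q)$, so the supremum is attained at a vertex of the feasible polyhedron, and (b) the hypothesis that this polyhedron is integral, so that such a vertex has binary $\bxi_q$ --- crucially, $\bm\pi$ itself need not be integral, which is why only this mild condition is needed, and why part (i) needs no such hypothesis at all (there the free binary vector sits in the dual objective, not the dual constraints, so its maximization is separable and handled by sign-definiteness alone). Everything else is bookkeeping: invoking strong duality of the inner linear/conic programs --- exactly the role of the ``sufficiently expensive recourse'' assumption, which keeps the dual feasible for every $\bxi\in\Xi$, so the inner supremum is finite and LP/conic duality closes with no gap --- and tracking how the $\ell_\infty$ dual norm together with sign-definiteness of $\T(\bfx)$ convert the $\bxi_T$-maximization into the additive penalties $\theta\|\bm Q^\top\bm y\|_1$ and $-\theta|\T(\bfx)|\e$ that appear in \eqref{D_sp_svm81}--\eqref{D_sp_svm812}.
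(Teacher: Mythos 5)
Your proposal is correct and follows essentially the same route as the paper's own proof: the dichotomy at $\theta=1$ for the binary block (forced to equal $\bfzeta^j$ when $\theta<1$, free over the cube when $\theta\ge 1$), dual-norm elimination of the continuous block using sign-definiteness of $\T(\bfx)$, strong LP duality on $\bm{W}^\top\bm{\pi}=\bm{Q}\bxi_q+\bm{q}$, and the integrality hypothesis to justify relaxing $\bxi_q\in\{0,1\}^{m_1}$ to $[0,1]^{m_1}$ in the free subcase of part (ii). One small caveat: your derivation (like the paper's own proof and its Proposition \ref{D_thm_tractable1_deterministic}) attaches the $\e^\top(\bm{Q}^\top\bfy)_+$ penalty to the free subcase $\theta\ge 1$ and the penalty-free formula to the frozen subcase $\theta<1$, which is the opposite of the labeling displayed in \eqref{D_sp_svm812}; the derivation is the correct one, and the two cases in that display appear to be transposed.
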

\begin{subequations}
\begin{proof} 
We will split the proof into two parts.
\begin{enumerate}[label=(\roman*)]
\item Since $p=\infty$ and $\Xi=\Re^{m_1}\times \{0,1\}^{m_2}$, thus \eqref{sp_svm2} becomes
\begin{align}
	\Z(\bm{x})= \frac{1}{N}\sum_{j\in [N]}\sup_{ \bm{\pi}\in \Re^{\ell}_+}&\sup_{\bxi_T\in \{0,1\}^{m_2},\bxi_q}\left\{(\bm{h}(\bm{x})-\bm{T}(\bfx)\bxi_T)^\top\bm{\pi}: \right.\notag\\
&\left.\|\bxi_q-\bfzeta_q^j\|_\infty\leq \theta,\|\bxi_T-\bfzeta_T^j\|_\infty\leq \theta, \bm{W}^\top \bm{\pi}= \bm{Q}\bxi_q+\bm{q}  \right\}.\label{D_sp_svm410}
\end{align} 
Above, $\bxi_T,\bfzeta_T^j\in\{0,1\}^{m_2}$ and $\|\bxi_T-\bfzeta_T^j\|_\infty\leq \theta $ imply that if $\theta\geq 1$, then $\bxi_T\in \{0,1\}^{m_2}$; otherwise, $\bxi_T=\bfzeta_T^j$. Hence, using the assumption that $\T(\bfx)\in \Re_+^{\ell\times m_1}$ or $\T(\bfx)\in \Re_-^{\ell\times m_1}$, \eqref{D_sp_svm410} further reduces to
\begin{align}
	\Z(\bm{x})= \frac{1}{N}\sum_{j\in [N]}\sup_{ \bm{\pi}\in \Re^{\ell}_+,\bxi_q}&\left\{(\bm{h}(\bm{x})-\I(\theta< 1)\bm{T}(\bfx)\bfzeta_T^j+\I(\theta\geq 1)(-\bm{T}(\bfx))_+\e)^\top\bm{\pi}: \right.\notag\\
&\left.\|\bxi_q-\bfzeta_q^j\|_\infty\leq \theta,\bm{W}^\top \bm{\pi}= \bm{Q}\bxi_q+\bm{q}  \right\}.\label{D_sp_svm41}
\end{align} 
Following the similar linearization and dualization steps in \Cref{D_thm_tractable1}, we arrive at \eqref{D_sp_svm81}.

\item Since $p=\infty$ and $\Xi= \{0,1\}^{m_1}\times\Re^{m_2}$, thus \eqref{sp_svm2} becomes
\begin{align}
	\Z(\bm{x})= \frac{1}{N}\sum_{j\in [N]}\sup_{ \bm{\pi}\in \Re^{\ell}_+}&\sup_{\bxi_q\in \{0,1\}^{m_1},\bxi_T}\left\{(\bm{h}(\bm{x})-\bm{T}(\bfx)\bxi_T)^\top\bm{\pi}: \right.\notag\\
&\left.\|\bxi_q-\bfzeta_q^j\|_\infty\leq \theta,\|\bxi_T-\bfzeta_T^j\|_\infty\leq \theta, \bm{W}^\top \bm{\pi}= \bm{Q}\bxi_q+\bm{q}  \right\}.\label{D_sp_svm411}
\end{align} 
Optimizing over $\bxi_T$ and using the assumption that $\T(\bfx)\in \Re_+^{\ell\times m_1}$ or $\T(\bfx)\in \Re_-^{\ell\times m_1}$, \eqref{D_sp_svm411} is now equivalent to
\begin{align}
	\Z(\bm{x})= \frac{1}{N}\sum_{j\in [N]}\sup_{ \bm{\pi}\in \Re^{\ell}_+}&\sup_{\bxi_q\in \{0,1\}^{m_1}}\left\{(\bm{h}(\bm{x})-\bm{T}(\bfx)\bfzeta^j_T)^\top\bm{\pi}+\theta\e^\top |\bm{T}(\bfx)|^\top \bm{\pi}: \right.\notag\\
&\left.\|\bxi_q-\bfzeta_q^j\|_\infty\leq \theta, \bm{W}^\top \bm{\pi}= \bm{Q}\bxi_q+\bm{q}  \right\}.\label{D_sp_svm412}
\end{align} 
Above, $\bxi_q,\bfzeta_q^j\in\{0,1\}^{m_1}$ and $\|\bxi_q-\bfzeta_q^j\|_\infty\leq \theta $ implies that if $\theta\geq 1$, then $\bxi_q\in \{0,1\}^{m_1}$; otherwise, $\bxi_T=\bfzeta_T^j$. Thus, there are two sub-cases.
\begin{enumerate}[label=(\alph*)]
\item If $\theta<1$, then \eqref{D_sp_svm412} becomes
\begin{align}
	\Z(\bm{x})= \frac{1}{N}\sum_{j\in [N]}\sup_{ \bm{\pi}\in \Re^{\ell}_+}&\left\{(\bm{h}(\bm{x})-\bm{T}(\bfx)\bfzeta^j_T)^\top\bm{\pi}+\theta\e^\top |\bm{T}(\bfx)|^\top \bm{\pi}: \bm{W}^\top \bm{\pi}= \bm{Q}\bfzeta^j_q+\bm{q}  \right\}.\label{D_sp_svm413}
\end{align} 
Let $\bm{y}$ denote the dual variables of constraints $\bm{W}^\top \bm{\pi}= \bm{Q}\bfzeta^j_q+\bm{q}$. Then according to the strong duality of linear programming, we arrive at the first part of \eqref{D_sp_svm812};
\item If $\theta\geq1$, then \eqref{D_sp_svm412} becomes
\begin{align}
	\Z(\bm{x})= \frac{1}{N}\sum_{j\in [N]}\sup_{ \bm{\pi}\in \Re^{\ell}_+,\bxi_q\in \{0,1\}^{m_1}}\left\{(\bm{h}(\bm{x})-\bm{T}(\bfx)\bfzeta^j_T)^\top\bm{\pi}+\theta\e^\top |\bm{T}(\bfx)|^\top \bm{\pi}: \bm{W}^\top \bm{\pi}= \bm{Q}\bxi_q+\bm{q}  \right\},\label{D_sp_svm414}
\end{align} 
Since the constraint system in \eqref{D_sp_svm414} is assumed to be integral, thus \eqref{D_sp_svm414} is equivalent to its continuous relaxation
\begin{align}
	\Z(\bm{x})= \frac{1}{N}\sum_{j\in [N]}\sup_{\bm{\pi}\in \Re^{\ell}_+,\bxi_q\in [0,1]^{m_1}}\left\{(\bm{h}(\bm{x})-\bm{T}(\bfx)\bfzeta^j_T)^\top\bm{\pi}+\theta\e^\top |\bm{T}(\bfx)|^\top \bm{\pi}: \bm{W}^\top \bm{\pi}= \bm{Q}\bxi_q+\bm{q}  \right\},\label{D_sp_svm415}
\end{align}
Let $\bm{y}$ denote the dual variables of constraints $\bm{W}^\top \bm{\pi}= \bm{Q}\bfzeta^j_q+\bm{q}$. Then according to strong duality of linear programming, we arrive at the second part of \eqref{D_sp_svm812}.
\end{enumerate} 
\end{enumerate}
  \qedA
\end{proof}
\end{subequations}

We make the following remarks about \Cref{D_thm_tractable1} and its corresponding formulations \eqref{D_sp_svm81} and \eqref{D_sp_svm812}.
\begin{enumerate}[label=(\roman*)]
\item We can introduce auxiliary variables to linearize the terms $ \|\bm{Q}^\top\bm{y}\|_1,|\T(\bm x)|,(\bm{Q}^\top\bfy)_+$ and reformulate the minimization problems \eqref{D_sp_svm81} and \eqref{D_sp_svm812} as linear programs;
\item If the assumption that $\T(\bfx)\in \Re_+^{\ell\times m_1}$ or $\T(\bfx)\in \Re_-^{\ell\times m_1}$ does not hold, then \eqref{D_sp_svm81} provides an upper bound for $\Z(\bfx)$ and this upper bound will become exact when $\theta\rightarrow 0$; and
\item If one of assumptions that (1) $\T(\bfx)\in \Re_+^{\ell\times m_1}$ or $\T(\bfx)\in \Re_-^{\ell\times m_1}$; and (2) the polyhedron 
$\left\{(\bm\pi,\bm\xi_q)\in \Re_+^\ell\times [0,1]^{m_1}: \bm{W}^\top \bm{\pi}= \bm{Q}\bxi_q+\bm{q}\right\}$
is integral, does not hold, then \eqref{D_sp_svm812} provides an upper bound for $\Z(\bfx)$ and this upper bound will become exact when $\theta\rightarrow 0$.
\end{enumerate}

According to the representation results in \Cref{D_thm_tractable1}, we provide the following equivalent deterministic reformulation of DRTSP \eqref{sp}. 
\begin{proposition}\label{D_thm_tractable1_deterministic}
Suppose $p=\infty$, and $\T(\bfx)\in \Re_+^{\ell\times m_1}$ or $\T(\bfx)\in \Re_-^{\ell\times m_1}$.
\begin{enumerate}[label=(\roman*)]
\item If $\Xi=\Re^{m_1}\times \{0,1\}^{m_2}$, then DRTSP \eqref{sp} is equivalent to 
\begin{subequations}\label{eq_1_tractable1_deterministic}\everymath{\displaystyle}
\begin{align}
v^*= \min_{\bm{x},\bm{y}}\quad& \bm{c}^{\top}\bm{x}+\frac{1}{N}\sum_{j\in [N]}[(\bm{Q}\bfzeta_q^j+\bm{q})^\top \bm{y}^j+\theta \|\bm{Q}^\top\bm{y}^j\|_1],\\
\textrm{s.t.   }	\quad&\displaystyle{\begin{array}{>{\displaystyle}l>{\displaystyle}l}
\displaystyle-(-\bm{T}(\bfx))\displaystyle_+\e+\W\bm{y}^j\geq \bm{h}(\bm{x}),\forall j\in [N],&\text{ if } \theta\geq 1\\
\displaystyle\bm{T}(\bfx)\bfzeta^j_T+\W\bm{y}^j\geq \bm{h}(\bm{x}) ,\forall j\in [N],&\text{ if } \theta<1
\end{array}},\\
&\bm{x}\in \X,\bm{y}^j\in \Re^{n_2},\forall j\in [N].
	\end{align}
\end{subequations}

\item If $\Xi=\{0,1\}^{m_1}\times \Re^{m_2}$ and
the polyhedron 
$\left\{(\bm\pi,\bm\xi_q)\in \Re_+^\ell\times [0,1]^{m_1}: \bm{W}^\top \bm{\pi}= \bm{Q}\bxi_q+\bm{q}\right\}$
is integral, then RTSP \eqref{sp} is equivalent to 
\begin{subequations}
\begin{align}
v^*= \min_{\bm{x},\bm{y},\bm{\sigma}}\quad& \bm{c}^{\top}\bm{x}+\frac{1}{N}\sum_{j\in [N]}[(\bm{Q}\bfzeta_q^j+\bm{q})^\top \bm{y}^j+\I(\theta>1)\e^\top(\bm{Q}^\top\bfy^j)_+],\\
\textrm{s.t.   }	\quad&\bm{T}(\bfx)\bfzeta^j_T+\W\bm{y}^j-\theta|\bm{T}(\bfx)|\e\geq \bm{h}(\bm{x}),\forall j\in [N],\\
&\bm\sigma^j\geq \bm{Q}^\top \bfy^j,\forall j\in [N] ,\\
&\bm{x}\in \X,\bm{y}^j\in \Re^{n_2},\forall j\in [N].
	\end{align}
\end{subequations}

\end{enumerate}
%
\end{proposition}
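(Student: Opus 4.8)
The plan is to obtain \Cref{D_thm_tractable1_deterministic} as an immediate corollary of \Cref{D_thm_tractable1}: substitute the representation of $\Z(\bm{x})$ derived there into the objective \eqref{sp-obj} of DRTSP \eqref{sp}, and then flatten the nested minimizations into one joint minimization over $\bm{x}$ and a per-sample copy of the recourse variable.

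Concretely, for part (i) with $\Xi=\Re^{m_1}\times\{0,1\}^{m_2}$, I would plug the piecewise expression \eqref{D_sp_svm81} into $v^*=\min_{\bm{x}\in\X}\{\bm{c}^\top\bm{x}+\Z(\bm{x})\}$. Because $\theta$ is a fixed scalar, exactly one branch of \eqref{D_sp_svm81} is active, so the problem takes the shape $\min_{\bm{x}\in\X}\bigl\{\bm{c}^\top\bm{x}+\tfrac1N\sum_{j\in[N]}\min_{\bm{y}^j\in\Re^{n_2}}g_j(\bm{x},\bm{y}^j)\bigr\}$, where each inner feasible set depends on $\bm{x}$ alone and the $\bm{y}^j$ do not interact through any coupling constraint. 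Hence the elementary identity $\inf_{\bm{x}}\bigl[a(\bm{x})+\sum_j\inf_{\bm{y}^j}b_j(\bm{x},\bm{y}^j)\bigr]=\inf_{\bm{x},\{\bm{y}^j\}}\bigl[a(\bm{x})+\sum_j b_j(\bm{x},\bm{y}^j)\bigr]$ applies verbatim — this step is purely set-theoretic, so it remains valid even though $\X$ may be nonconvex or (partly) integral — and it yields \eqref{eq_1_tractable1_deterministic}. The term $\|\bm{Q}^\top\bm{y}^j\|_1$ and the constraint terms $|\bm{T}(\bm{x})|$, $(-\bm{T}(\bm{x}))_+$ can then be linearized with auxiliary variables exactly as noted in the remarks following \Cref{D_thm_tractable1}, though this is optional for the equivalence claim itself.

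Part (ii) with $\Xi=\{0,1\}^{m_1}\times\Re^{m_2}$ is handled the same way starting from \eqref{D_sp_svm812}; the only new ingredient is the term $\e^\top(\bm{Q}^\top\bm{y}^j)_+$ in the relevant $\theta$-regime. I would represent it via epigraph variables, writing $\e^\top(\bm{Q}^\top\bm{y}^j)_+=\min\{\e^\top\bm{\sigma}^j:\bm{\sigma}^j\ge\bm{Q}^\top\bm{y}^j,\ \bm{\sigma}^j\ge\bm{0}\}$; since this term enters a minimization with a nonnegative coefficient, $\bm{\sigma}^j$ collapses to the positive part at optimality, so the reformulation is exact. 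Absorbing this extra inner minimization into the outer one by the same interchange gives the stated program. The argument carries essentially no mathematical risk — its content is inherited from \Cref{D_thm_tractable1} — so the only thing to be careful about is bookkeeping: keeping the two regimes $\theta\ge1$ and $\theta<1$ straight, writing both defining inequalities of the epigraph variables, checking the sign of the coefficient multiplying the positive-part term so that its epigraph relaxation is tight, and making sure no auxiliary variable accidentally couples the $N$ per-sample subproblems.
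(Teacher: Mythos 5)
Your proposal matches the paper's treatment exactly: the paper states \Cref{D_thm_tractable1_deterministic} as an immediate consequence of \Cref{D_thm_tractable1}, obtained by substituting the representation of $\Z(\bm{x})$ into \eqref{sp-obj} and merging the per-sample inner minimizations with the outer one over $\bm{x}\in\X$, with the positive-part term handled by the epigraph variables $\bm{\sigma}^j$ just as you describe. The interchange-of-infima identity you invoke is the whole content of the step, and your care about the non-coupling of the $\bm{y}^j$ across samples and the tightness of the epigraph relaxation is exactly the right bookkeeping.
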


We next illustrate the proposed formulation \eqref{eq_1_tractable1_deterministic} using \Cref{example1}, where we realize the fact that support of disruption risks is binary, i.e., $\tilde{\bm \delta}\in \{0,1\}^{n_1}$.
\begin{example}\label{example4}\rm Following the notation in \Cref{example1}, let us consider DR-RFLP with both demand and disruption uncertainties. We further suppose that the reference distance is $\|\cdot\|_{\infty}$ and the support of $\tilde{\bm{\xi}}$ is $\{0,1\}^{n_1}\times \Re^\ell$. Since the coefficients of uncertain parameters in the constraints \eqref{RFLP_sp-det_2nd2} have the same sign, according to \Cref{D_thm_tractable1_deterministic}, DR-RFLP can be equivalently formulated as the following MILP:
\begin{subequations}\label{DR-FRLP4}
\begin{align}
v^*= \min_{\bm{x},\bm{y}} & \ \ \bm{f}^{\top}\bm{x}+\frac{1}{N}\sum_{j\in [N]}\sum_{t\in [\ell]}\sum_{s\in [n_1+1]}c_{ts}(\hat{d}_t^j +\theta)y_{ts}^j, \label{RFLP_sp-obj4} \\
\rm{s.t. } 
& \ \ \sum_{s\in [n_1+1]}y_{ts}^j=1, \forall j\in [N],\forall t\in [\ell],\\
&\ \ y_{ts}^j \leq \I(\theta<1)\hat{\delta}_s^jx_s, \forall j\in [N],\forall t\in [\ell],\forall s\in [n_1], \label{RFLP_sp-det_2nd24}\\
& \ \ \bm{x} \in \{0,1\}^{n_1}, \bm{y}^j\in \Re_+^{\ell\times n_1},\forall j\in [N]. \label{RFLP_sp-det4}
\end{align}
\end{subequations}
Clearly, formulation \eqref{DR-FRLP4} is less conservative than \eqref{DR-FRLP1}, since the right-hand sides of constraints \eqref{RFLP_sp-det_2nd24} are no smaller than those in \eqref{RFLP_sp-det_2nd21}. This demonstrates that exploring binary support can indeed help reduce the conservatism of the distributionally robust models.\QEDA

\end{example}

\subsection{Tractable Reformulation II: With Objective Uncertainty Only}
Unlike \Cref{thm_tractable1_AC}, in general, we cannot provide tractable reformulations for the DRTSP with only binary objective uncertainty, and its complexity analysis is postponed to \Cref{D_sec_compl}. Instead, we provide a special case where the tractable reformulation can be derived.
\begin{theorem}\label{D_thm_tractable1_AC}
Suppose that $\Xi=\{0,1\}^{m_1}\times\{\bxi_T\}$ and the polyhedron 
$$\left\{(\bm\pi,\bm\xi_q)\in \Re_+^\ell\times [0,1]^{m_1}: \bm{W}^\top \bm{\pi}= \bm{Q}\bxi_q+\bm{q}, \sum_{t\in \C_0(\bfzeta_q^j)}\xi_{qt}+\sum_{t\in \C_1(\bfzeta_q^j)}(1-\xi_{qt})\leq \kappa\right\}$$
is integral for all $j\in [N]$ and integer $\kappa\in \Ze_+$, where sets $\C_0(\bfzeta_q^j):=\{t\in [m_1]:\zeta_{qt}^j=0 \}$ and $\C_1(\bm{\zeta}_q^j):=\{t\in [m_1]:\zeta_{qt}^j=1 \}$. Then for any $p\in [1,\infty)$, the function $\Z(\bm{x})$ is equivalent to 
\begin{align}
	\Z(\bm{x})= \frac{1}{N}\sum_{j\in [N]}&\min_{ \bm{y}\in \Re^{n_2},\lambda\in \Re_+,\bm\sigma\in \Re_+^{m_1}}	\left\{(\bm{Q}\bfzeta_q^j+\bm{q})^\top \bm{y}+\lfloor\theta^p\rfloor\lambda+\e^\top\bm\sigma:\bm{T}(\bfx)\bxi_T+\W\bm{y}\geq \bm{h}(\bm{x}),\right.\notag\\
&\left.  \lambda+\sigma_t\geq (\bm{Q}^\top \bfy)_t, \forall t\in\C_0(\bfzeta_q^j), \lambda+\sigma_t\geq -(\bm{Q}^\top \bfy)_t,\forall t\in\C_1(\bfzeta_q^j)\right\}.\label{D_sp_svm8_AC}
	\end{align}
\end{theorem}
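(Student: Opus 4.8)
The plan is to specialise the bilinear representation \eqref{sp_svm2} to the present setting and then turn the resulting inner maximisation into a linear program whose dual is exactly \eqref{D_sp_svm8_AC}. Since $\Xi=\{0,1\}^{m_1}\times\{\bxi_T\}$, the quantity $\bm{T}(\bfx)\bxi_T$ is a constant vector and, for each $j\in[N]$, \eqref{sp_svm2} becomes
\[\sup_{\bm\pi\in\Re^\ell_+,\ \bxi_q\in\{0,1\}^{m_1}}\left\{(\bm{h}(\bm{x})-\bm{T}(\bfx)\bxi_T)^\top\bm\pi:\ \|\bxi_q-\bfzeta_q^j\|_p\le\theta,\ \bm{W}^\top\bm\pi=\bm{Q}\bxi_q+\bm{q}\right\}.\]
First I would use that both $\bxi_q$ and $\bfzeta_q^j$ are $0$--$1$ vectors: for finite $p$ we have $|\xi_{qt}-\zeta_{qt}^j|^p=|\xi_{qt}-\zeta_{qt}^j|$, so $\|\bxi_q-\bfzeta_q^j\|_p\le\theta$ is equivalent to the Hamming-budget inequality $\sum_{t\in\C_0(\bfzeta_q^j)}\xi_{qt}+\sum_{t\in\C_1(\bfzeta_q^j)}(1-\xi_{qt})\le\theta^p$, and, its left-hand side being an integer, the same as $\le\lfloor\theta^p\rfloor$.

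Next I would reparametrise the ``flippable'' coordinates so that the feasible set becomes a genuine box together with one knapsack inequality: setting $w_t=\xi_{qt}$ for $t\in\C_0(\bfzeta_q^j)$ and $w_t=1-\xi_{qt}$ for $t\in\C_1(\bfzeta_q^j)$, the restriction $\bxi_q\in\{0,1\}^{m_1}$ becomes $\bm w\in\{0,1\}^{m_1}$, the budget becomes $\e^\top\bm w\le\lfloor\theta^p\rfloor$, and the equality becomes $\bm{W}^\top\bm\pi=\widehat{\bm{Q}}\bm w+(\bm{Q}\bfzeta_q^j+\bm{q})$ for a column-sign-flipped $\widehat{\bm{Q}}$ (columns in $\C_0(\bfzeta_q^j)$ unchanged, columns in $\C_1(\bfzeta_q^j)$ negated). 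This affine, integral change of variables is precisely what produces the $(\bm{Q}\bfzeta_q^j+\bm{q})^\top\bm y$ term and the two asymmetric sign patterns appearing in the constraints of \eqref{D_sp_svm8_AC}. Because the substitution maps integral polyhedra to integral polyhedra, the integrality hypothesis (applied with $\kappa=\lfloor\theta^p\rfloor\in\Ze_+$) transfers to the polyhedron in $(\bm\pi,\bm w)$; hence, invoking the Integral Polyhedron property from the Preliminaries and the fact that the objective $(\bm{h}(\bm{x})-\bm{T}(\bfx)\bxi_T)^\top\bm\pi$ is linear and does not depend on $\bm w$, the binary restriction $\bm w\in\{0,1\}^{m_1}$ may be relaxed to $\bm w\in[0,1]^{m_1}$ without changing the optimal value (this polyhedron is pointed since $\bm w$ is box-bounded and $\bm\pi\ge\bm 0$, so a bounded linear program over it is solved at an integral vertex).

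Finally I would take the linear-programming dual of the relaxed inner maximum, with $\bm y$ the free multiplier of $\bm{W}^\top\bm\pi-\widehat{\bm{Q}}\bm w=\bm{Q}\bfzeta_q^j+\bm{q}$, $\lambda\ge0$ for the knapsack inequality $\e^\top\bm w\le\lfloor\theta^p\rfloor$, and $\bm\sigma\ge\bm 0$ for $\bm w\le\e$. The dual constraint coming from $\bm\pi\ge\bm 0$ is $\bm{T}(\bfx)\bxi_T+\bm{W}\bm y\ge\bm{h}(\bm{x})$, while the constraints coming from $\bm w\ge\bm 0$ read $\lambda+\sigma_t\ge(\bm{Q}^\top\bm y)_t$ for $t\in\C_0(\bfzeta_q^j)$ and $\lambda+\sigma_t\ge-(\bm{Q}^\top\bm y)_t$ for $t\in\C_1(\bfzeta_q^j)$, and the dual objective is $(\bm{Q}\bfzeta_q^j+\bm{q})^\top\bm y+\lfloor\theta^p\rfloor\lambda+\e^\top\bm\sigma$; averaging over $j\in[N]$ gives \eqref{D_sp_svm8_AC}. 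Strong duality applies because the ``sufficiently expensive recourse'' assumption guarantees the inner maximum is feasible (and it is bounded on the relevant range). The main obstacle is the bookkeeping in the reparametrisation step: one has to pick the change of variables so that (a) the integrality assumption still covers the transformed polyhedron and (b) the dual comes out in exactly the asymmetric form of \eqref{D_sp_svm8_AC} --- a direct dualisation without the substitution is also possible, but yields a superficially different linear program that one would then have to show is equivalent.
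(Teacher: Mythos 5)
Your proposal is correct and follows essentially the same route as the paper: linearize the $p$-norm constraint for binary vectors into a Hamming-budget constraint with budget $\lfloor\theta^p\rfloor$, invoke the integrality hypothesis to relax $\bxi_q$ to $[0,1]^{m_1}$, and dualize the resulting inner linear program with multipliers $\bm y,\lambda,\bm\sigma$. Your explicit change of variables $w_t=\xi_{qt}$ (resp.\ $1-\xi_{qt}$) is only a bookkeeping device for the shift $\bm{Q}\bxi_q+\bm{q}=\bm{Q}\bfzeta_q^j+\bm{Q}(\bxi_q-\bfzeta_q^j)+\bm{q}$ that the paper performs implicitly before dualizing, and it arrives at the same formulation \eqref{D_sp_svm8_AC}.
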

\begin{subequations}
\begin{proof}
Since $p\in [1,\infty)$ and $\Xi=\{0,1\}^{m_1}\times\{\bxi_T\}$, thus \eqref{sp_svm2} becomes
\begin{align}
	&\Z(\bm{x})= \frac{1}{N}\sum_{j\in [N]}\sup_{ \bm{\pi}\in \Re^{\ell}_+,\bxi_q\in \{0,1\}^{m_1}}\left\{(\bm{h}(\bm{x})-\bm{T}(\bfx)\bxi_T)^\top\bm{\pi}:\|\bxi_q-\bfzeta_q^j\|_p\leq \theta, \bm{W}^\top \bm{\pi}= \bm{Q}\bxi_q+\bm{q} \right\}.\label{D_sp_svm41_AC}
\end{align} 
Since both $\bxi_q,\bfzeta_q^j\in \{0,1\}^{m_1}$, let sets $\C_0(\bfzeta_q^j):=\{t\in [m_1]:\zeta_{qt}^j=0 \}$ and $\C_1(\bm{\zeta}_q^j):=\{t\in [m_1]:\zeta_{qt}^j=1 \}$. Therefore, we have the following linearization results:
\begin{align}
\|\bxi_q-\bfzeta_q^j\|_p^p=\sum_{t\in \C_0(\bfzeta_q^j)}\xi_{qt}+\sum_{t\in \C_1(\bfzeta_q^j)}(1-\xi_{qt}).\label{eq_linearization}
\end{align}

Thus, \eqref{D_sp_svm41_AC} becomes
\begin{align}
	\Z(\bm{x})= \frac{1}{N}\sum_{j\in [N]}&\sup_{ \bm{\pi}\in \Re^{\ell}_+,\bxi_q\in \{0,1\}^{m_1}}\left\{(\bm{h}(\bm{x})-\bm{T}(\bfx)\bxi_T)^\top\bm{\pi}:\right.\notag\\
&\left.\sum_{t\in \C_0(\bfzeta_q^j)}\xi_{qt}+\sum_{t\in \C_1(\bfzeta_q^j)}(1-\xi_{qt})\leq \lfloor\theta^p\rfloor, \bm{W}^\top \bm{\pi}= \bm{Q}\bxi_q+\bm{q} \right\}.\label{D_sp_svm41_AC2}
\end{align}
Since the constraint system of the inner supremum \eqref{D_sp_svm41_AC2} is integral according to our assumption, thus, we can relax the binary variables to be continuous. Thus, we have 
\begin{align}
	\Z(\bm{x})= \frac{1}{N}\sum_{j\in [N]}&\sup_{ \bm{\pi}\in \Re^{\ell}_+,\bxi_q\in [0,1]^{m_1}}\left\{(\bm{h}(\bm{x})-\bm{T}(\bfx)\bxi_T)^\top\bm{\pi}:\right.\notag\\
&\left.\sum_{t\in \C_0(\bfzeta_q^j)}\xi_{qt}+\sum_{t\in \C_1(\bfzeta_q^j)}(1-\xi_{qt})\leq \lfloor\theta^p\rfloor, \bm{W}^\top \bm{\pi}= \bm{Q}\bfzeta^j_q+\bm{Q}(\bxi_q-\bfzeta^j_q)+\bm{q} \right\}.\label{D_sp_svm41_AC3}
\end{align}
Let $\bm{y}$ denote the dual variables of the constraints $\bm{W}^\top \bm{\pi}= \bm{Q}\bfzeta^j_q+\bm{Q}(\bxi_q-\bfzeta^j_q)+\bm{q} $, $\lambda$ be the dual variable of constraint $\sum_{t\in \C_0(\bfzeta_q^j)}\xi_{qt}+\sum_{t\in \C_1(\bfzeta_q^j)}(1-\xi_{qt})\leq \lfloor\theta^p\rfloor$, and $\bm{\sigma}$ be the dual variables of constraints $\bxi_q\leq \e$. Then according to the strong duality of linear programming, \eqref{D_sp_svm41_AC3} is equivalent to
\eqref{D_sp_svm8_AC}.
  \qedA
\end{proof}
\end{subequations}

We make the following remarks about \Cref{D_thm_tractable1_AC} and its corresponding formulation \eqref{D_sp_svm8_AC}.
\begin{enumerate}[label=(\roman*)]
\item Clearly, since DRTSP with objective uncertainty only is a special case of general DRTSP, thus the result of \Cref{D_thm_tractable1} directly follows and is not listed here;
\item The penalty term $\lfloor\theta^p\rfloor\lambda+\e^\top\bm\sigma$ with auxiliary variables $\lambda,\bm{\delta}$ is used to enforce the robustness of the formulation. This penalty term becomes
\[\sum_{t\in \C_0(\bfzeta_q^j)}((\bm{Q}^\top\bfy)_t)_++\sum_{t\in \C_1(\bfzeta_q^j)}(-(\bm{Q}^\top\bfy)_t)_+\]
if $\theta^p\geq m_1$; and 
\item If the integrality assumption of the polyhedra in \Cref{D_thm_tractable1_AC} does not hold, then \eqref{D_sp_svm8_AC} provides an upper bound for the function $\Z(\bfx)$ and this upper bound will become exact when $\theta\rightarrow 0$.
\end{enumerate}

According to the representation results in \Cref{D_thm_tractable1_AC}, we provide the following equivalent deterministic reformulation of DRTSP \eqref{sp}. 
\begin{proposition}\label{D_thm_tractable1_deterministic_AC}
Suppose that $\Xi=\{0,1\}^{m_1}\times\{\bxi_T\}$ and the polyhedron 
$$\left\{(\bm\pi,\bm\xi_q)\in \Re_+^\ell\times [0,1]^{m_1}: \bm{W}^\top \bm{\pi}= \bm{Q}\bxi_q+\bm{q}, \sum_{t\in \C_0(\bfzeta_q^j)}\xi_{qt}+\sum_{t\in \C_1(\bfzeta_q^j)}(1-\xi_{qt})\leq \kappa\right\}$$
is integral for all $j\in [N]$ and integer $\kappa\in \Ze_+$, where sets $\C_0(\bfzeta_q^j):=\{t\in [m_1]:\zeta_{qt}^j=0 \}$ and $\C_1(\bm{\zeta}_q^j):=\{t\in [m_1]:\zeta_{qt}^j=1 \}$. Then for any $p\in [1,\infty)$, DRTSP \eqref{sp} is equivalent to 
\begin{subequations}
\begin{align}
v^*= \min_{\bm{x},\bm{y}}\quad& \bm{c}^{\top}\bm{x}+\frac{1}{N}\sum_{j\in [N]}[(\bm{Q}\bfzeta_q^j+\bm{q})^\top \bm{y}^j+\lfloor\theta^p\rfloor\lambda^j+\e^\top\bm\sigma^j],\\
\textrm{s.t.   }	\quad&\bm{T}(\bfx)\bxi_T+\W\bm{y}^j\geq \bm{h}(\bm{x}),\forall j\in [N],\\
&\lambda^j+\sigma_t^j\geq (\bm{Q}^\top \bfy^j)_t,\forall j\in [N], \forall t\in\C_0(\bfzeta_q^j),\\
& \lambda^j+\sigma_t^j\geq -(\bm{Q}^\top \bfy^j)_t,\forall j\in [N],\forall t\in\C_1(\bfzeta_q^j),\\
&\bm{x}\in \X,\bm{y}^j\in \Re^{n_2},\lambda^j,\bm{\sigma}^j\in \Re^{m_1},\forall j\in [N].
	\end{align}
\end{subequations}

\end{proposition}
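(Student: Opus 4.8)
The plan is to obtain \Cref{D_thm_tractable1_deterministic_AC} as an immediate corollary of \Cref{D_thm_tractable1_AC}: substitute the representation of $\Z(\bm{x})$ into the definition \eqref{sp} of DRTSP, and then merge the outer minimization over $\bm{x}$ with the $N$ inner minimizations via a standard variable-splitting (epigraph) argument.

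First I would invoke \Cref{D_thm_tractable1_AC}: under the stated hypotheses that $\Xi=\{0,1\}^{m_1}\times\{\bxi_T\}$ and the indicated polyhedron is integral for all $j\in[N]$ and all $\kappa\in\Ze_+$, and for any $p\in[1,\infty)$, for every fixed $\bm{x}$ one has
\[
\Z(\bm{x})=\frac{1}{N}\sum_{j\in[N]}\min_{(\bm{y}^j,\lambda^j,\bm{\sigma}^j)\in\F_j(\bm{x})}\Big[(\bm{Q}\bfzeta_q^j+\bm{q})^\top\bm{y}^j+\lfloor\theta^p\rfloor\lambda^j+\e^\top\bm{\sigma}^j\Big],
\]
where $\F_j(\bm{x})$ denotes the (polyhedral) set cut out by $\bm{T}(\bfx)\bxi_T+\W\bm{y}^j\geq\bm{h}(\bm{x})$, $\lambda^j+\sigma_t^j\geq(\bm{Q}^\top\bfy^j)_t$ for $t\in\C_0(\bfzeta_q^j)$, $\lambda^j+\sigma_t^j\geq-(\bm{Q}^\top\bfy^j)_t$ for $t\in\C_1(\bfzeta_q^j)$, with $\bm{y}^j\in\Re^{n_2}$, $\lambda^j\in\Re_+$, $\bm{\sigma}^j\in\Re_+^{m_1}$. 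Plugging this into \eqref{sp} yields $v^*=\min_{\bm{x}\in\X}\big[\bm{c}^\top\bm{x}+\tfrac{1}{N}\sum_{j\in[N]}\min_{(\bm{y}^j,\lambda^j,\bm{\sigma}^j)\in\F_j(\bm{x})}(\cdot)\big]$. Since $\bm{c}^\top\bm{x}$ is free of the auxiliary variables, since the $N$ inner minimizations range over pairwise disjoint blocks $(\bm{y}^j,\lambda^j,\bm{\sigma}^j)$, and since they enter the objective additively, the elementary identity $\min_{\bm{x}}\big(g(\bm{x})+\sum_{j}\min_{\bm{w}^j\in\F_j(\bm{x})}h_j(\bm{w}^j)\big)=\min_{\bm{x},\{\bm{w}^j\}_j}\big\{g(\bm{x})+\sum_{j}h_j(\bm{w}^j):\bm{w}^j\in\F_j(\bm{x})\ \forall j\big\}$ applies, and collecting the constraints $\bm{w}^j\in\F_j(\bm{x})$ over all $j$ together with $\bm{x}\in\X$ gives exactly the claimed formulation (with $\lambda^j,\bm{\sigma}^j$ non-negative at optimality, so the sign restrictions may be kept or dropped).

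There is essentially no hard step here: everything substantive — exactness of the $\Z(\bm{x})$ representation, the use of the integrality hypothesis to relax $\bxi_q\in\{0,1\}^{m_1}$ to $[0,1]^{m_1}$, and the appeal to the ``Sufficiently Expensive Recourse'' assumption that underpins the strong-duality step in \eqref{D_sp_svm8_AC} — is already carried by \Cref{D_thm_tractable1_AC}. The only line worth spelling out is the legitimacy of pulling the $N$ inner minima outside the outer minimum over $\bm{x}$, which is valid precisely because, once $\bm{x}$ is fixed, the problem is block-separable across $j$ and each block is itself a minimization, so no min--max ordering issue arises.
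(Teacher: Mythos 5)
Your proposal is correct and matches the paper's (implicit) argument exactly: the paper states \Cref{D_thm_tractable1_deterministic_AC} as an immediate corollary of \Cref{D_thm_tractable1_AC}, obtained by substituting the representation \eqref{D_sp_svm8_AC} of $\Z(\bm{x})$ into \eqref{sp} and merging the block-separable inner minimizations with the outer minimization over $\bm{x}\in\X$. Your side remark about the sign restrictions is also well taken --- the variables $\lambda^j,\bm{\sigma}^j$ should be kept non-negative as in \Cref{D_thm_tractable1_AC} (the proposition's ``$\lambda^j,\bm{\sigma}^j\in\Re^{m_1}$'' is best read as a typo), since dropping them could make the inner problem unbounded when $\lfloor\theta^p\rfloor>m_1$.
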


\subsection{Tractable Reformulation III: With Constraint Uncertainty Only}

Similarly, we provide special cases of DRTSP with only binary constraint uncertainty such that the tractable reformulations can be derived.
\begin{theorem}\label{D_thm_approximation1} 
Suppose that $\Xi= \{\bxi_q\}\times \{0,1\}^{m_2}$, $p\in [1,\infty)$, and $\theta\in [1,\sqrt[p]{2})$. Then the function $\Z(\bm{x})$ is equivalent to 
\begin{align}
	&\Z(\bm{x})= \frac{1}{N}\sum_{j\in [N]}\max_{i\in [m_2+1]}\min_{ \bm{y}\in \Re^{n_2}}	\left\{(\bm{Q}\bxi_q^j+\bm{q})^\top \bm{y}:\bm{T}(\bfx)\hat{\bfzeta}^{ij}_T+\W\bm{y}\geq \bm{h}(\bm{x}) \right\},\label{D_sp_svm5}
	\end{align}
where for each $i\in [m_2+1]$ and ,
\begin{align}\label{def_hat_zeta}
\hat{\bfzeta}^{ij}_T=\bfzeta^{j}_T+
\begin{cases}
\bm{0}, &\text{ if }i=m_2+1\\
\e_i, &\text{ if }i\in \C_0(\bfzeta_T^j)\\
-\e_i, &\text{ if }i\in \C_1(\bfzeta_T^j)
\end{cases},
\end{align}
and sets $\C_0(\bfzeta_T^j):=\{t\in [m_2]:\zeta_{Tt}^j=0 \}$ and $\C_1(\bm{\zeta}_T^j):=\{t\in [m_2]:\zeta_{Tt}^j=1 \}$.
\end{theorem}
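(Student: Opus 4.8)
The plan is to specialize the bilinear-program representation \eqref{sp_svm2} of $\Z(\bm x)$ from \Cref{lem_equivalent} to the present data. Since the support of the constraint-uncertainty block is the singleton $\{\bxi_q\}$, every sample has $\bfzeta_q^j=\bxi_q$, so the equality constraint in \eqref{sp_svm2} reads $\bm W^\top\bm\pi=\bm Q\bxi_q+\bm q$ for all $j$, and the only uncertain quantity left in the inner supremum is $\bxi_T\in\{0,1\}^{m_2}$. For two $\{0,1\}$-vectors one has $\|\bxi_T-\bfzeta_T^j\|_p^p=\|\bxi_T-\bfzeta_T^j\|_0$, the Hamming distance, so the Wasserstein constraint $\|\bxi_T-\bfzeta_T^j\|_p\le\theta$ is equivalent to $\|\bxi_T-\bfzeta_T^j\|_0\le\theta^p$. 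The crux of the argument is the elementary observation that $\theta\in[1,\sqrt[p]{2})$ forces $\theta^p\in[1,2)$, hence $\|\bxi_T-\bfzeta_T^j\|_0\le 1$: a feasible $\bxi_T$ either equals $\bfzeta_T^j$ or differs from it in exactly one coordinate.

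Next I would enumerate those candidates. Flipping coordinate $i$ of $\bfzeta_T^j$ means adding $\e_i$ when $\zeta_{Ti}^j=0$, i.e. $i\in\C_0(\bfzeta_T^j)$, and subtracting $\e_i$ when $\zeta_{Ti}^j=1$, i.e. $i\in\C_1(\bfzeta_T^j)$; together with the index $m_2+1$ that leaves $\bfzeta_T^j$ unchanged, these are exactly the vectors $\hat{\bfzeta}^{ij}_T$, $i\in[m_2+1]$, of \eqref{def_hat_zeta}, and this finite list is precisely the feasible region for $\bxi_T$. Substituting into \eqref{sp_svm2} and moving the now finite maximum over $i$ outside the supremum over $\bm\pi$ gives $\Z(\bm x)=\frac1N\sum_{j\in[N]}\max_{i\in[m_2+1]}\sup_{\bm\pi\in\Re_+^\ell}\{(\bm h(\bm x)-\bm T(\bm x)\hat{\bfzeta}^{ij}_T)^\top\bm\pi:\bm W^\top\bm\pi=\bm Q\bxi_q+\bm q\}$. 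For each fixed $(i,j)$ the inner supremum is, by \eqref{sp-obj_recourse_dual}, the linear-programming dual of the recourse problem \eqref{recourse} evaluated at $(\bxi_q,\hat{\bfzeta}^{ij}_T)$; the Sufficiently Expensive Recourse assumption guarantees strong LP duality, so it equals $\min_{\bm y\in\Re^{n_2}}\{(\bm Q\bxi_q+\bm q)^\top\bm y:\bm T(\bm x)\hat{\bfzeta}^{ij}_T+\bm W\bm y\ge\bm h(\bm x)\}$, which is exactly \eqref{D_sp_svm5}. This mirrors the dualization step in the proof of \Cref{thm_approximation1}, except that here no dual-norm manipulation is needed---the binary structure already collapses the uncertainty set to finitely many points.

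The only step with real content is the counting argument in the first paragraph: $\theta^p<2$ collapses the binary Wasserstein ball to single-coordinate perturbations, which is what makes the otherwise combinatorial inner maximization tractable. Two minor points should be recorded: first, the hypothesis $\theta\ge 1$ is used so that the single-flip vectors genuinely lie in the Wasserstein ball (if $\theta<1$ only the index $i=m_2+1$ would survive), which matches the stated range; second, if some recourse LP at $(\bxi_q,\hat{\bfzeta}^{ij}_T)$ is infeasible, then by the recourse-dual feasibility assumption the corresponding supremum is $+\infty$ and the $\min$ over the empty feasible set is $+\infty$ as well, so the claimed identity still holds.
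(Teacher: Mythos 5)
Your proposal is correct and follows essentially the same route as the paper: specialize \eqref{sp_svm2} to the singleton $\Xi_q$, use the Hamming-distance identity \eqref{eq_linearization} together with $\theta^p\in[1,2)$ to collapse the feasible $\bxi_T$ to the finite set $\{\hat{\bfzeta}^{ij}_T\}_{i\in[m_2+1]}$, pull the finite maximum outside, and dualize the inner linear program via strong duality. Your added remarks on the $\theta\ge 1$ endpoint and on infeasible recourse LPs are sensible refinements but do not change the argument.
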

\begin{subequations}
\begin{proof}
Since $p\in [1,\infty)$ and $\Xi=\{\bxi_q\}\times \{0,1\}^{m_2}$, \eqref{sp_svm2} becomes
\begin{align}
	&\Z(\bm{x})= \frac{1}{N}\sum_{j\in [N]}\sup_{ \bm{\pi}\in \Re^{\ell}_+,\bxi_T\in\{0,1\}^{m_2}}\left\{(\bm{h}(\bm{x})-\bm{T}(\bfx)\bxi_T)^\top\bm{\pi}:\|\bxi_T-\bfzeta_T^j\|_p\leq \theta, \bm{W}^\top \bm{\pi}= \bm{Q}\bxi_q+\bm{q} \right\},\label{D_sp_svm3_AO}
	\end{align}

According to \eqref{eq_linearization}, and the fact that $\theta\in [1,\sqrt[p]{2})$, we know that
\begin{align*}
\left\{\bxi_T\in\{0,1\}^{m_2}:\|\bxi_T-\bfzeta_T^j\|_p\leq \theta\right\}=\{\bm{0}\}\cup\{\bfzeta^{j}_T+\e_i\}_{i\in \C_0(\bfzeta_T^j)}\cup \{\bfzeta^{j}_T-\e_i\}_{i\in \C_1(\bfzeta_T^j)}:=\{\hat{\bfzeta}^{ij}_T\}_{i\in [m_2+1]}
\end{align*}
Hence, optimizing $\bxi_T$ first, we arrive at
\begin{align}
	&\Z(\bm{x})=\frac{1}{N}\sum_{j\in [N]}\max_{i\in [m_2+1]}\sup_{ \bm{\pi}\in \Re^{\ell}_+}\left\{(\bm{h}(\bm{x})-\bm{T}(\bfx)\hat{\bfzeta}_T^{ij})^\top\bm{\pi}:\bm{W}^\top \bm{\pi}= \bm{Q}\bxi_q+\bm{q} \right\},\label{D_sp_svm4_AO}
	\end{align} 
Taking the dual of inner supremum and using strong duality of linear programming, we arrive at \eqref{D_sp_svm5}.
  \qedA
\end{proof}
\end{subequations}

We make the following remarks about \Cref{D_thm_approximation1} and its corresponding formulation \eqref{D_sp_svm5}.
\begin{enumerate}[label=(\roman*)]
\item To evaluate the function $\Z(\bfx)$, one needs to solve $m_1+1$ linear programs for each $j\in [N]$;
\item If $\theta\in [0,1)$, then according to the proof of \Cref{D_thm_approximation1},
\begin{align}
	&\Z(\bm{x})= \frac{1}{N}\sum_{j\in [N]}\min_{ \bm{y}\in \Re^{n_2}}	\left\{(\bm{Q}\bxi_q^j+\bm{q})^\top \bm{y}:\bm{T}(\bfx)\bfzeta^{j}_T+\W\bm{y}\geq \bm{h}(\bm{x}) \right\},\label{D_sp_svm52}
	\end{align}
i.e., the function $\Z(\bfx)$ is equivalent to its sampling average approximation counterpart.
\end{enumerate}

Below provides an equivalent deterministic reformulation of DRTSP \eqref{sp}.
\begin{proposition}\label{D_thm_approximation1_deterministic}
Suppose that $\Xi= \{\bxi_q\}\times \{0,1\}^{m_2}$, $p\in [1,\infty)$, and $\theta\in [1,\sqrt[p]{2})$. Then DRTSP \eqref{sp} is equivalent to 
\begin{subequations}
\begin{align}
v^*= \min_{\bm{x},\bm{\eta}}\quad& \bm{c}^{\top}\bm{x}+\frac{1}{N}\sum_{j\in [N]}\eta_j,\\
\textrm{s.t.   }	\quad&\eta_{j}\geq (\bm{Q}\bxi_q+\bm{q})^\top \bm{y}^{ij} ,\forall j\in [N],\forall i\in [m_2+1],\\
&\bm{T}(\bfx)\hat{\bfzeta}_T^{ij}+\W\bm{y}^{ij}\geq \bm{h}(\bm{x}), \forall j\in [N],\forall i\in [m_2+1],\\
&\bm{x}\in \X,\bm{y}^{ij}\in \Re^{n_2},\forall j\in [N],i\in [m_2+1],
	\end{align}
\end{subequations}
where $\{\hat{\bfzeta}_T^{ij}\}_{i\in [m_2+1],j\in [N]}$ are defined in \eqref{def_hat_zeta}
\end{proposition}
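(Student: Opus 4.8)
The plan is to substitute the representation of $\Z(\bm{x})$ furnished by \Cref{D_thm_approximation1} into the outer problem \eqref{sp} and then collapse the resulting $\max$--$\min$ nesting into a single minimization through the standard epigraph reformulation. Concretely, combining the definition of $v^*$ in \eqref{sp} with \Cref{D_thm_approximation1} yields
\[
v^*=\min_{\bm{x}\in\X}\Bigl\{\bm{c}^\top\bm{x}+\frac{1}{N}\sum_{j\in[N]}\max_{i\in[m_2+1]}g_{ij}(\bm{x})\Bigr\},
\]
where $g_{ij}(\bm{x})$ denotes the optimal value of the linear program $\min_{\bm{y}\in\Re^{n_2}}\{(\bm{Q}\bxi_q+\bm{q})^\top\bm{y}:\bm{T}(\bfx)\hat{\bfzeta}^{ij}_T+\W\bm{y}\geq\bm{h}(\bm{x})\}$ and $\hat{\bfzeta}^{ij}_T$ is the vector defined in \eqref{def_hat_zeta}.

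First I would introduce epigraph variables $\eta_j$, one for each $j\in[N]$, and use the fact that the objective is non-decreasing in every $\eta_j$ to obtain $v^*=\min_{\bm{x}\in\X,\bm{\eta}}\{\bm{c}^\top\bm{x}+\frac{1}{N}\sum_{j\in[N]}\eta_j:\eta_j\geq g_{ij}(\bm{x}),\ \forall j\in[N],\ \forall i\in[m_2+1]\}$. Next I would unfold each inner minimization. Since $g_{ij}(\bm{x})$ is the optimal value of an LP whose value, whenever the feasible polyhedron is nonempty, is bounded below for every $\bm{x}\in\X$ --- the Sufficiently Expensive Recourse assumption guarantees that the dual of the second-stage problem is feasible, hence the primal value is finite --- the infimum defining $g_{ij}(\bm{x})$ is attained when finite. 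Therefore $\eta_j\geq g_{ij}(\bm{x})$ is equivalent to the existence of $\bm{y}^{ij}\in\Re^{n_2}$ with $\bm{T}(\bfx)\hat{\bfzeta}^{ij}_T+\W\bm{y}^{ij}\geq\bm{h}(\bm{x})$ and $\eta_j\geq(\bm{Q}\bxi_q+\bm{q})^\top\bm{y}^{ij}$; and if the second-stage polyhedron is empty (so $g_{ij}(\bm{x})=+\infty$), no such $\bm{y}^{ij}$ exists and $\bm{x}$ is infeasible for both formulations, so the equivalence persists. Pulling these existential quantifiers out into the outer minimization --- legitimate because we minimize jointly over $\bm{x}$ and $\bm{\eta}$ and the $\bm{y}^{ij}$ enter no other constraint --- produces precisely the stated formulation, with decision variables $\bm{x},\bm{\eta}$ and $\{\bm{y}^{ij}\}_{i\in[m_2+1],\,j\in[N]}$.

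I do not expect a genuine obstacle: this is the textbook epigraph reformulation of a minimum of a maximum of LP value functions, and the substantive content has already been established in \Cref{D_thm_approximation1}. The one point meriting care is the attainment of each inner infimum, which is what permits replacing ``$\eta_j\geq\min_{\bm{y}}(\cdot)$'' by ``$\exists\,\bm{y}^{ij}$ with $\eta_j\geq(\cdot)$''; this is handled by linear programming strong duality together with the standing Sufficiently Expensive Recourse assumption, and infeasible choices of $\bm{x}$ are excluded consistently on both sides.
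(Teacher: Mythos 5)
Your proposal is correct and follows exactly the route the paper intends: the paper states \Cref{D_thm_approximation1_deterministic} as an immediate consequence of \Cref{D_thm_approximation1} via the standard epigraph reformulation of the $\max$--$\min$ structure, which is precisely what you carry out. Your extra care about attainment of the inner LP infimum (guaranteed by the sufficiently expensive recourse assumption together with the fundamental theorem of linear programming) is a valid and welcome detail that the paper leaves implicit.
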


We note that if the number of the extreme points of dual constraint system of \eqref{recourse} is small, then equivalently, we can represent the recourse function in the form of piece-wise max of affine functions in the random parameters, and the tractable reformulation can be extended to the case with any reference distance $\|\cdot\|_p$ such that $p\in [1,\infty)$.
\begin{proposition}\label{D_prop_max}Suppose that $\Xi=\{0,1\}^{\tau}, p\in [1,\infty),$ and $z(\bm{x})=\sup_{\Pr\in \P}\E_{\Pr}[\max_{i\in [m]}\{\bm{a}_i(\bm{x})^\top \bm{\xi}+d_i(\bm{x})\}]$ with affine functions $\bm{a}_i(\bm{x}):\Re^{n_1}\rightarrow\Re^\tau$ and $d_i(\bm{x}):\Re^{n_1}\rightarrow\Re$ for each $i\in [m]$. Then 
\begin{itemize}
\item  Function $z(\bm{x})$ is equivalent to
\begin{align}\label{D_eq_rhs_uncertainty}
z(\bm{x})=\frac{1}{N}\sum_{j\in [N]}\max_{i\in [m]}\sup_{ \bxi\in [0,1]^{\tau}}\left\{\bm{a}_i(\bm{x})^\top \bm{\xi}+d_i(\bm{x}):\sum_{t\in \C_0(\bfzeta^j)}\xi_t+\sum_{t\in \C_1(\bfzeta^j)}(1-\xi_t)\leq \lfloor\theta^p\rfloor\right\},
\end{align}
where sets $\C_0(\bfzeta^j):=\{t\in [\tau]:\zeta^j_t=0 \}$ and $\C_1(\bm{\zeta}^j):=\{t\in [\tau]:\zeta^j_t=1 \}$; and
\item DRTSP \eqref{sp} is equivalent to 
\begin{subequations}\label{D_eq_reform_drtsp}
\begin{align}
v^*= \min_{\bm{x},\bm{\eta},\lambda,\bm{\sigma}}\quad& \bm{c}^{\top}\bm{x}+\frac{1}{N}\sum_{j\in [N]}\eta_j,\\
\textrm{s.t.   }	\quad&\eta_j\geq \lambda^{ij}\lfloor\theta^p\rfloor+\bm{a}_i(\bm{x})^\top \bm{\zeta}^j+d_i(\bm{x}),\forall j\in [N],\forall i\in [m],\\
&\lambda^{ij}+\sigma_t^{ij}\geq a_{it}(\bm{x}),\forall j\in [N],\forall i\in [m],\forall t\in \C_0(\bfzeta^j),\\
&\lambda^{ij}+\sigma_t^{ij}\geq -a_{it}(\bm{x}),\forall j\in [N],\forall i\in [m], \forall t\in \C_1(\bfzeta^j),\\
&\bm{x}\in \X,\lambda^{ij}\in \Re_+,\bm\sigma^{ij}\in \Re_+^\tau,\forall j\in [N],\forall i\in [m] .
	\end{align}
\end{subequations}
\end{itemize}

\end{proposition}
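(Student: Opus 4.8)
The plan is to follow the template of \Cref{prop_max} and \Cref{D_thm_tractable1_AC}, with the integral-polyhedron fact playing the role that the dual-norm identity plays in the continuous case. First I would invoke the $\infty$-Wasserstein reduction \eqref{sp_svm1}, which under the stated hypotheses gives $z(\bm{x})=\frac{1}{N}\sum_{j\in[N]}\sup_{\bxi}\{Z(\bm{x},\bxi):\bxi\in\{0,1\}^{\tau},\ \|\bxi-\bm{\zeta}^j\|_p\le\theta\}$, then substitute $Z(\bm{x},\bxi)=\max_{i\in[m]}\{\bm{a}_i(\bm{x})^\top\bxi+d_i(\bm{x})\}$ and interchange the finite maximum over $i\in[m]$ with the supremum over $\bxi$, so that the $j$-th summand becomes $\max_{i\in[m]}\sup_{\bxi\in\{0,1\}^{\tau}}\{\bm{a}_i(\bm{x})^\top\bxi+d_i(\bm{x}):\|\bxi-\bm{\zeta}^j\|_p\le\theta\}$.

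Next I would linearize the Wasserstein ball exactly as in \eqref{eq_linearization}: for $\bxi,\bm{\zeta}^j\in\{0,1\}^{\tau}$ one has $\|\bxi-\bm{\zeta}^j\|_p^{p}=\sum_{t\in\C_0(\bm{\zeta}^j)}\xi_t+\sum_{t\in\C_1(\bm{\zeta}^j)}(1-\xi_t)$, a nonnegative integer, so $\|\bxi-\bm{\zeta}^j\|_p\le\theta$ holds iff this count is at most $\lfloor\theta^p\rfloor$; this already yields the binary analogue of \eqref{D_eq_rhs_uncertainty}. To replace $\{0,1\}^{\tau}$ by $[0,1]^{\tau}$ I would argue that, for every $\kappa\in\Ze_+$, the polytope $\{\bxi\in[0,1]^{\tau}:\sum_{t\in\C_0(\bm{\zeta}^j)}\xi_t+\sum_{t\in\C_1(\bm{\zeta}^j)}(1-\xi_t)\le\kappa\}$ is integral: the affine substitution $u_t=\xi_t$ on $\C_0(\bm{\zeta}^j)$ and $u_t=1-\xi_t$ on $\C_1(\bm{\zeta}^j)$ is a unimodular bijection carrying it onto $\{\bm{u}\in[0,1]^{\tau}:\e^\top\bm{u}\le\kappa\}$, whose vertices are $0$--$1$ (a fractional vertex would require the single tight constraint $\e^\top\bm{u}=\kappa$ with integer right-hand side and all remaining coordinates integer, impossible), so the original polytope equals the convex hull of its integer points by the integral-polyhedron fact recalled in the Preliminaries (\citealt{schrijver1998theory}). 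Since $\bm{a}_i(\bm{x})^\top\bxi$ is linear, its supremum over this polytope is attained at a vertex, hence the binary and $[0,1]$ relaxations have the same optimal value, which establishes \eqref{D_eq_rhs_uncertainty}.

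For the deterministic reformulation \eqref{D_eq_reform_drtsp} I would keep the change of variables $\bm{u}$. Using $\bm{a}_i(\bm{x})^\top\bm{\zeta}^j=\sum_{t\in\C_1(\bm{\zeta}^j)}a_{it}(\bm{x})$, the inner supremum in \eqref{D_eq_rhs_uncertainty} equals $\bm{a}_i(\bm{x})^\top\bm{\zeta}^j+d_i(\bm{x})+\sup_{\bm{u}}\{\sum_{t\in\C_0(\bm{\zeta}^j)}a_{it}(\bm{x})u_t-\sum_{t\in\C_1(\bm{\zeta}^j)}a_{it}(\bm{x})u_t:\bm{u}\in[0,1]^{\tau},\ \e^\top\bm{u}\le\lfloor\theta^p\rfloor\}$, a feasible and bounded linear program. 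Dualizing it---with $\lambda^{ij}\ge0$ the multiplier of $\e^\top\bm{u}\le\lfloor\theta^p\rfloor$ and $\bm{\sigma}^{ij}\ge0$ those of $\bm{u}\le\e$---and invoking strong LP duality turns the supremum into $\min\{\lfloor\theta^p\rfloor\lambda^{ij}+\e^\top\bm{\sigma}^{ij}:\lambda^{ij}+\sigma_t^{ij}\ge a_{it}(\bm{x})\ \forall t\in\C_0(\bm{\zeta}^j),\ \lambda^{ij}+\sigma_t^{ij}\ge-a_{it}(\bm{x})\ \forall t\in\C_1(\bm{\zeta}^j)\}$. Substituting this back into $v^*=\min_{\bm{x}\in\X}\{\bm{c}^\top\bm{x}+z(\bm{x})\}$, introducing epigraph variables $\eta_j$ for the outer $\max_{i\in[m]}$, and merging the minimizations gives exactly \eqref{D_eq_reform_drtsp}.

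I expect the only delicate point to be the integrality argument in the second paragraph---checking that the stated substitution is unimodular, so that integrality of the knapsack polytope $\{\bm{u}\in[0,1]^{\tau}:\e^\top\bm{u}\le\kappa\}$ transfers back to the original $\bxi$-polytope and thereby legitimizes the continuous relaxation; the remaining steps (the finite max/sup interchange, LP duality, and epigraph linearization) are routine bookkeeping of the kind already carried out in \Cref{D_thm_tractable1_AC} and \Cref{thm_approximation1}.
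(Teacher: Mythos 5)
Your proposal is correct and follows essentially the same route as the paper's proof: reduce via \eqref{sp_svm1}, linearize the norm ball with \eqref{eq_linearization}, pass to the continuous relaxation by integrality of the cardinality-constrained polytope, and then dualize the inner LP to obtain \eqref{D_eq_reform_drtsp}. The only difference is that you spell out the unimodular change of variables justifying the integrality claim, which the paper simply asserts.
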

\begin{subequations}
\begin{proof}
Since $\Xi=\{0,1\}^{\tau}$ and $Z(\bfx,\bxi)=\max_{i\in [m]}\{\bm{a}_i(\bm{x})^\top \bm{\xi}+d_i(\bm{x})\}$, \eqref{sp_svm1} becomes
\begin{align}
	&\Z(\bm{x})= \frac{1}{N}\sum_{j\in [N]}\max_{i\in [m]}\sup_{ \bxi\in \{0,1\}^{\tau}}\left\{\bm{a}_i(\bm{x})^\top \bm{\xi}+d_i(\bm{x}):\|\bxi-\bfzeta^j\|_p\leq \theta\right\}.\label{D_sp_svm31}
	\end{align} 
According to \eqref{eq_linearization}, \eqref{D_sp_svm31} becomes
\begin{align}
	&\Z(\bm{x})= \frac{1}{N}\sum_{j\in [N]}\max_{i\in [m]}\sup_{ \bxi\in \{0,1\}^{\tau}}\left\{\bm{a}_i(\bm{x})^\top \bm{\xi}+d_i(\bm{x}):\sum_{t\in \C_0(\bfzeta^j)}\xi_t+\sum_{t\in \C_1(\bfzeta^j)}(1-\xi_t)\leq \lfloor\theta^p\rfloor\right\}.\label{D_sp_svm32}
	\end{align}
Since the feasible region defined by cardinality constraint is integral, thus, we can relax the binary variables in the inner supremum of \eqref{D_sp_svm32} to be continuous. Thus, we arrive at \eqref{D_eq_rhs_uncertainty}.

To derive the formulation \eqref{D_eq_reform_drtsp}, let us first take the dual of inner supremum with dual variables $\lambda, \bm{\sigma}$ and use the strong duality of linear programming. Thus, \eqref{D_eq_rhs_uncertainty} is equivalent to
\begin{align}
\Z(\bm{x})= \frac{1}{N}\sum_{j\in [N]}\max_{i\in [m]}	&\min_{\lambda\in \Re_+,\bm{\sigma}\in \Re_+^{m_2}}\left\{\lambda\lfloor\theta^p\rfloor+\bm{a}_i(\bm{x})^\top \bm{\zeta}^j+d_i(\bm{x}):\right.\notag\\
&\left.\lambda+\sigma_t\geq a_{it}(\bm{x}),\forall t\in \C_0(\bfzeta^j),\lambda+\sigma_t\geq -a_{it}(\bm{x}), \forall t\in \C_1(\bfzeta^j)\right\}.\label{D_sp_svm33}
	\end{align}

Then the conclusion follows from a straightforward linearization.
  \qedA
\end{proof}
\end{subequations}

We will illustrate the proposed formulation in \Cref{D_thm_approximation1_deterministic} using \Cref{example1}, where we consider that there is no demand uncertainty, i.e., the only uncertain parameters are facility disruptions, and the support of random disruptions is $\{0,1\}^{n_1}$.
\begin{example}\label{example5}\rm Following the notation in \Cref{example1}, let us consider DR-RFLP with only disruption risks, i.e., the demand is deterministic satisfying $\Pr\{\tilde{\bm d}=\bm d\}=1$.

Suppose the reference distance is $\|\cdot\|_{1}$, the support of $\tilde{\bm{\xi}}$ is $\{0,1\}^{n_1}\times \{\bm d\}$, and
the Wasserstein radius $\theta\in [1,\sqrt[p]{2})$. According to \Cref{D_thm_approximation1_deterministic}, DR-FRLP with disruption risks can be equivalently formulated as the following MILP:
\begin{subequations}\label{DR-FRLP15}
\begin{align}
v^*= \min_{\bm{x},\bm{y}} & \ \ \bm{f}^{\top}\bm{x}+\frac{1}{N}\sum_{j\in [N]}\eta_j, \label{RFLP_sp-obj15} \\
\rm{s.t. } 
&\ \  \eta_j\geq \sum_{t\in [\ell]}\sum_{s\in [n_1+1]}c_{ts}d_t y_{ts}^{ij}, \forall j\in [N],\forall i\in [n_1+1],\\
& \ \ \sum_{s\in [n_1+1]}y_{ts}^{ij}=1, \forall j\in [N],\forall t\in [\ell],\forall i\in [n_1+1],\\
&\ \ y_{ts}^{ij} \leq \bar{\delta}_s^{ij}x_s, \forall j\in [N],\forall t\in [\ell],\forall i\in [n_1+1],\forall s\in [n_1], \label{RFLP_sp-det_2nd215}\\
& \ \ \bm{x} \in \{0,1\}^{n_1}, \bm{y}^{ij}\in \Re_+^{\ell\times n_1},\forall j\in [N],\forall i\in [n_1+1],. \label{RFLP_sp-det15}
\end{align}
\end{subequations}
where for each $i\in [n_1+1]$ and ,
\begin{align*}
\bar{\bm\delta}^{ij}_T=\hat{\bm\delta}^j+
\begin{cases}
\bm{0}, &\text{ if }i=n_1+1\\
\e_i, &\text{ if }i\in \C_0(\hat{\bm\delta}^j)\\
-\e_i, &\text{ if }i\in \C_1(\hat{\bm\delta}^j)
\end{cases}.
\end{align*}
\QEDA
\end{example}

\subsection{Complexity Analysis}\label{D_sec_compl}
Finally, we close this section by showing that for general reference distance $\|\cdot\|_{p}$ with $p\in[1,\infty]$, either with objective uncertainty only or with constraint uncertainty only, computing the function $\Z(\bm{x})$ with $N=1$ can be NP-hard.
	\begin{restatable}{proposition}{propcompexltiyD}\label{prop_compexltiy_D}
Computing $\Z(\bm{x})$ is NP-hard for any $p\in [1,\infty]$ whenever
\begin{enumerate}[label=(\roman*)]
\item (Without Constraint Uncertainty)  $N=1$, $\Xi=\{0,1\}^{m_1}\times \{\bxi_T\}$, $\bm{h}(\bm{x})=\bm{0},\bm{T}(\bfx)=\bm{0}$, and Wasserstein radius $\theta\geq \sqrt[p]{m_1}$; or
\item (Without Objective Uncertainty) $N=1$, $\Xi=\{\bxi_T\}\times \{0,1\}^{m_2} $, $\bm{h}(\bm{x})=\bm{0},\bm{T}(\bfx)=\textrm{const.}$, and Wasserstein radius $\theta\geq\sqrt[p]{m_2}$.
\end{enumerate}
\end{restatable}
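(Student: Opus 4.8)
The plan is to prove both parts by polynomial-time reductions from classical NP-complete/NP-hard problems. The common engine is the observation that when the support of a random sub-vector equals $\{0,1\}^{m}$, any two distinct points of it are at $\ell_p$-distance at most $\sqrt[p]{m}$ (with $\sqrt[\infty]{m}$ read as $1$), so the hypothesis $\theta\ge\sqrt[p]{m}$ renders the constraint $\|\cdot-\bfzeta^1\|_p\le\theta$ in \eqref{sp_svm1} \emph{vacuous}: the adversary may pick an arbitrary binary vector, and $\Z(\bfx)$ collapses to a purely combinatorial optimum over $\{0,1\}^{m}$. I will choose the data $(\bm{Q},\bm{q},\W,\T,\h)$ so that this optimum is NP-hard. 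Throughout I work with the primal recourse form \eqref{sp_svm1} directly, so the arguments do not rely on the Sufficiently Expensive Recourse assumption.

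For part (i) (binary objective uncertainty) I would reduce from the feasibility of a general binary program, $\exists\,\bm{r}\in\{0,1\}^{t_2}:\bm{A}\bm{r}=\bm{b}$, which \Cref{prop_complexity} already uses and which is NP-complete (it contains \textsc{Subset-Sum}). Take $m_1=t_2$, identify $\bxi_q$ with $\bm{r}$, $n_2=t_1$, $\ell=m_2=1$, and set $\T(\bfx)=\bm{0}$, $\h(\bfx)=\bm{0}$, $\W=\bm{0}$, $\bm{Q}=\bm{A}$, $\bm{q}=-\bm{b}$. Then for every fixed binary $\bxi_q$ the recourse \eqref{recourse}, whose only constraint $\T(\bfx)\bxi_T+\W\bm{y}\ge\h(\bfx)$ reads $0\ge0$, equals $\min_{\bm{y}\in\Re^{t_1}}(\bm{A}\bxi_q-\bm{b})^\top\bm{y}$ over the whole space, hence is $0$ if $\bm{A}\bxi_q=\bm{b}$ and $-\infty$ otherwise. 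Because the Wasserstein ball is vacuous, \eqref{sp_svm1} gives $\Z(\bfx)=\sup_{\bxi_q\in\{0,1\}^{t_2}}Z(\bfx,(\bxi_q,\bxi_T))$, which is $0$ exactly when the binary program is feasible and $-\infty$ otherwise, so any algorithm evaluating $\Z(\bfx)$ decides binary feasibility. (As in \Cref{prop_complexity}, such a construction necessarily violates Sufficiently Expensive Recourse — otherwise $\Z(\bfx)\equiv0$ — but this is harmless since $\Z(\bfx)$ is defined via \eqref{sp_svm1}.)

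For part (ii) (binary constraint uncertainty) I would reduce from the classical NP-hard \textsc{Max-Cut} problem. Given a graph $G=(V,E)$, take $m_2=|V|$ and read a binary $\bxi_T$ as a bipartition of $V$; introduce recourse variables $\bm{y}=(z_{uv})_{(u,v)\in E}$, set $\bm{Q}=\bm{0}$ (no objective uncertainty), $\bm{q}=\e$, $\h(\bfx)=\bm{0}$, and let $\W$ and the constant matrix $\T(\bfx)$ encode the $2|E|$ inequalities $z_{uv}\ge\xi_{T,u}-\xi_{T,v}$ and $z_{uv}\ge\xi_{T,v}-\xi_{T,u}$. Then the recourse \eqref{recourse} evaluates to $\min\{\e^\top\bm{z}:\ \ldots\}=\sum_{(u,v)\in E}|\xi_{T,u}-\xi_{T,v}|$, and because the ball is vacuous \eqref{sp_svm1} yields $\Z(\bfx)=\max_{\bxi_T\in\{0,1\}^{|V|}}\#\{(u,v)\in E:\xi_{T,u}\ne\xi_{T,v}\}$, the max-cut value of $G$. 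Here the dual recourse polyhedron $\{\bm{\pi}\ge\bm{0}:\W^\top\bm{\pi}=\bm{q}\}$ is nonempty (take $\bm{\pi}=\tfrac12\e$), so Sufficiently Expensive Recourse in fact holds and all optima are finite; an algorithm evaluating $\Z(\bfx)$ would therefore solve \textsc{Max-Cut}.

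The step I expect to be the main obstacle is part (ii). In the continuous analogue of \Cref{prop_complexity}, worst-casing over an $\|\cdot\|_p$-ball produces an $\|\cdot\|_{p*}$ penalty, which together with a box constraint rewards integrality; eliminating a \emph{binary} right-hand perturbation instead produces a one-sided $(\cdot)_+$ penalty, which on its own does not. The fix is not to dualize the recourse but to keep the hardness in the primal recourse \emph{value} viewed as a function of $\bxi_T$: this value can be made to equal the convex piecewise-linear cut function $\sum_{(u,v)\in E}|\xi_{T,u}-\xi_{T,v}|$, whose maximum over the hypercube is attained on $\{0,1\}^{|V|}$ and equals \textsc{Max-Cut}. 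Part (i) is by comparison routine, since there the recourse only performs a sign test on the objective coefficients and is trivially $0$ or $-\infty$.
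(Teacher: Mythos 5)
Your proposal is correct, and the underlying engine (for $\theta\geq\sqrt[p]{m}$ the Wasserstein ball swallows the whole binary cube, so \eqref{sp_svm1} collapses to a combinatorial optimization over $\{0,1\}^m$) is exactly the one the paper exploits; but your reductions differ from the paper's in instructive ways. For part (i) you and the paper both reduce from feasibility of a binary program $\{\bm{r}\in\{0,1\}^{t_2}:\bm{A}\bm{r}=\bm{b}\}$, but you place $\bm{A}$ in $\bm{Q}$ and leave the second stage unconstrained, so the recourse acts as a zero/$-\infty$ test on $\bm{A}\bxi_q-\bm{b}$, whereas the paper places $\bm{A}$ in $\bm{W}$ and works in the dual representation \eqref{sp_svm2}, obtaining $\Z(\bm{x})=\sup\{0:\bm{A}\bm{r}=\bm{b},\ \bm{r}=\bxi_q,\ \bxi_q\in\{0,1\}^{t_2}\}$; both constructions certify feasibility via the value of $\Z(\bm{x})$, and both (yours explicitly, the paper's implicitly) step outside the sufficiently-expensive-recourse assumption, which is harmless since \eqref{sp_svm1} does not require it. For part (ii) your route is genuinely different: the paper stays with binary-program feasibility and engineers $\bm{T}(\bfx)=[\e_{t_2+1}-\e_1,\ldots,\e_{t_2}-\e_{2t_2}]$ so that worst-casing over $\bxi_T\in\{0,1\}^{2t_2}$ turns the dual objective into $\sum_i|r_i-s_i|$ over $\{\bm{A}\bm{r}=\bm{b},\ \bm{r}+\bm{s}=\e,\ \bm{r},\bm{s}\geq\bm{0}\}$, with $\Z(\bm{x})=t_2$ iff a binary point exists; you instead reduce from \textsc{Max-Cut}, keeping the hardness in the primal recourse value $\sum_{(u,v)\in E}|\xi_{T,u}-\xi_{T,v}|$. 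Your version has the advantage that the dual recourse polyhedron is nonempty, all quantities are finite, and the standing assumption genuinely holds, which makes the hardness claim slightly more robust; the paper's version has the advantage of running both parts off a single NP-complete problem and of reusing the machinery of \Cref{prop_complexity}. Either argument establishes the proposition.
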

\begin{proof}Let us first consider the NP-complete problem - feasibility problem of a general binary program which asks
\begin{quote}
	{(Feasibility problem of a general binary program)} Given a rational matrix $\bm{A}\in \Qe^{t_1\times t_2}$ and a rational vector $\bm{b}\in \Qe^{t_1}$, is there exists a binary vector $\bm{r}\in \{0,1\}^{t_2}$ such that $\bm{A}\bm{r}=\bm{b}$?
\end{quote}

Next, we split the proof into two cases- when $\Xi=\{0,1\}^{m_1}\times\{\bxi_T\}$ and when $\Xi=\{\bxi_T\}\times \{0,1\}^{m_2}$.
\begin{enumerate}[label=(\roman*)]
\item When $N=1,\Xi=\{0,1\}^{m_1}\times\{\bxi_T\}$, let $\bm{h}(\bm{x})=\bm{0},\bm{T}(\bfx)=\bm{0}$, $\bm{W}^\top=\begin{bmatrix}
\bm{A}\\
\bm{I}_{t_2}
\end{bmatrix}, \bm{Q}=\begin{bmatrix}
\bm{0}\\
\bm{I}_{t_2}
\end{bmatrix} ,\q=\begin{pmatrix}
\bm{b}\\
\bm{0}
\end{pmatrix},\bm{\pi}=\bm{r}$, and $\ell=t_2,m_1=t_2,n_2=t_1+t_2$. As $\theta\geq \sqrt[p]{m_1}$, thus \eqref{sp_svm2} becomes
\begin{align}
	&\Z(\bm{x})=\sup_{ \bm{r}\in \Re^{t_2}_+,\bxi_q}\left\{0:\bxi_q\in \{0,1\}^{t_2}, \bm{A}\bm{r}=\bm{b},\bm{r}=\bxi_q \right\}.\label{D_sp_svm41_AC_comp}
\end{align} 
Clearly, $\Z(\bm{x})=0$ if and only if the binary program $\{\bm{r}\in \{0,1\}^{t_2}:\bm{A}\bm{r}=\bm{b}\}$ is feasible.

 \item When $N=1,\Xi=\{\bxi_T\}\times \{0,1\}^{m_2}$, let $\bm{h}(\bm{x})=\bm{0},\bm{T}(\bfx)=\allowbreak[
\e_{t_2+1}-\e_1,\ldots, \e_{2t_2}-\e_{t_2},\e_{1}-\e_{t_2+1},\ldots,\allowbreak\e_{t_2}-\e_{2t_2}]$,  $\bm{W}^\top=\begin{bmatrix}
\bm{A}&\bm{0}\\
\bm{I}_{t_2}&\bm{I}_{t_2}
\end{bmatrix},\bm{Q}=\bm{0}, \q=\begin{pmatrix}
\bm{b}\\
\e
\end{pmatrix},\bm{\pi}=\begin{pmatrix}
\bm{r}\\
\bm{s}
\end{pmatrix}$, and $\ell=2t_2,m_2=2t_2,n_2=t_1+t_2$. As $\theta\geq \sqrt[p]{m_2}$, thus \eqref{sp_svm2} becomes
\begin{align}
	&\Z(\bm{x})=\sup_{ \bm{r}\in \Re^{t_2}_+,\bm{s}\in \Re^{t_2}_+,\bxi_q}\left\{\sum_{i\in [t_2]} (\xi_{qi}-\xi_{q(t_2+i)})(r_i-s_i):\bxi_q\in \{0,1\}^{2t_2}, \bm{A}\bm{r}=\bm{b},\bm{r}+\bm{s}=\e \right\},
\end{align} 
which is equivalent to
\begin{align}
	&\Z(\bm{x})=\sup_{ \bm{r}\in \Re^{t_2}_+,\bm{s}\in \Re^{t_2}_+}\left\{\sum_{i\in [t_2]} |(r_i-s_i)|: \bm{A}\bm{r}=\bm{b},\bm{r}+\bm{s}=\e \right\}.\label{D_sp_svm41_AO_comp}
\end{align}
Above, $\Z(\bm{x})=t_2$ if and only if there exists a binary vector $(\bm{r},\bm{s})\in \{0,1\}^{m_1}\times \{0,1\}^{m_1}$ such that $\bm{A}\bm{r}=\bm{b},\bm{r}+\bm{s}=\e$. 
Thus, $\Z(\bm{x})=t_2$ if and only if the binary program $\{\bm{r}\in \{0,1\}^{t_2}:\bm{A}\bm{r}=\bm{b}\}$ is feasible.
\end{enumerate}
\qedA
\end{proof}

\section{Summary of Main Results and Formulation Recommendations}\label{sec_rec}

In this section, we provide a summarized 
\Cref{tab:my-table2} for the different formulations studied in \Cref{sec_cont} and \Cref{sec_bin}. For a DRTSP problem, we have the following recommendations about how to choose a proper formulation:

\begin{enumerate}[label=\textbf{Case \arabic{enumi}.},wide = 0pt, itemsep=1.5ex]
\item If all the random parameters in the worst-case wait-and-see problem are continuous, then consider three sub-cases:
\begin{enumerate}[label=\textit{(S1.\arabic*)},wide = 20pt, itemsep=1.5ex]
\item If both objective function and constraint system involve random parameters, then it is better to use reference distance $\|\cdot\|_{\infty}$ and apply the results in \Cref{thm_tractable1} and \Cref{thm_tractable1_deterministic}, which provide tractable formulations if their conditions are satisfied. Otherwise, the these formulations become conservative approximation and will be exact when Wasserstein radius goes to $0$;
\item If only objective involves random parameters, then the results in \Cref{thm_tractable1_AC} and \Cref{thm_tractable1_deterministic_AC} suffice.
\item If random parameters appear only in the constraint system, then it is better to use reference distance $\|\cdot\|_{1}$ and follow the results in \Cref{thm_approximation1} and \Cref{thm_approximation1_deterministic}. In addition, if the recourse function can be expressed as a piecewise maximum of a finite number of affine functions, then the results \Cref{prop_max} apply to any reference distance $\|\cdot\|_p$ with $p\in [1,\infty]$.
\end{enumerate}
\item  If random parameters in the objective function or in the constraint system are binary, then consider three sub-cases:
\begin{enumerate}[label=\textit{(S2.\arabic*)},wide = 20pt, itemsep=1.5ex]
\item If both objective function and constraint system have random parameters, then it is better to use reference distance $\|\cdot\|_{\infty}$ and apply the results in \Cref{D_thm_tractable1} and \Cref{D_thm_tractable1_deterministic}, which provide tractable formulations if their conditions are satisfied. Otherwise, these formulations become conservative approximation and will become exact when Wasserstein radius goes to $0$;
\item If only objective involves random parameters, which are binary, then the results in \Cref{D_thm_tractable1_AC} and \Cref{D_thm_tractable1_deterministic_AC} apply to any reference distance $\|\cdot\|_p$ with $p\in [1,\infty)$ provided that their conditions are satisfied. Similarly, if their conditions are not met, then the results in \Cref{D_thm_tractable1_AC} and \Cref{D_thm_tractable1_deterministic_AC} become conservative approximation;
\item If random parameters appear only in the constraint system, which are binary, then it is better to use reference distance $\|\cdot\|_{1}$ and follow results in \Cref{D_thm_approximation1} and \Cref{D_thm_approximation1_deterministic} given that the Wasserstein radius $\theta$ is small. If the Wasserstein radius is large (i.e., there are very limited empirical data points), then we recommend using results in \Cref{D_thm_tractable1} and \Cref{D_thm_tractable1_deterministic} with reference distance $\|\cdot\|_{\infty}$. In addition, if the recourse function can be expressed as a piecewise maximum of a finite number of affine functions, then the results \Cref{D_prop_max} apply to any reference distance $\|\cdot\|_p$ with $p\in [1,\infty)$.
\end{enumerate}
\end{enumerate}

\begin{table}[]
\caption{}
	\setlength{\tabcolsep}{3.0pt}
\renewcommand{\arraystretch}{1.5}
\label{tab:my-table2}
\centering
\begin{tabular}{|c|c|c|c|c|c|c|}
\hline
\multicolumn{2}{|c|}{\multirow{2}{*}{Support}} & \multicolumn{3}{c|}{Conditions} & \multicolumn{2}{c|}{Formulation} \\ \cline{3-7} 
\multicolumn{2}{|c|}{} & \begin{tabular}[c]{@{}c@{}}Uncertainty\\ Type\end{tabular} & $p$ & \begin{tabular}[c]{@{}c@{}}Other\\ Conditions\\ (Yes/No)\end{tabular} & $\Z(\bfx)$ & DRTSP \\ \hline
\multirow{4}{*}{\rotatebox[origin=c]{270}{Continuous}} & $\Xi=\Re^{m_1}\times \Re^{m_2}$ & General& $p=\infty$ &Yes  & \Cref{thm_tractable1} &\Cref{thm_tractable1_deterministic}  \\ \cline{2-7} 
& $\Xi=\Re^{m_1}\times \{\bxi_T\}$ & \begin{tabular}[c]{@{}c@{}}Objective\\Uncertainty\end{tabular}& $p\in[1,\infty]$ &No  & \Cref{thm_tractable1_AC} &\Cref{thm_tractable1_deterministic_AC} \\ \cline{2-7} 
& $\Xi=\{\bxi_q\}\times \Re^{m_2}$ & \begin{tabular}[c]{@{}c@{}}Constraint\\Uncertainty\end{tabular}& $p=1$ &No  & \Cref{thm_approximation1}  &\Cref{thm_approximation1_deterministic}\\ \cline{2-7} 
& $\Xi=\{\bxi_q\}\times \Re^{m_2}$ & \begin{tabular}[c]{@{}c@{}}Piecewise\\Maximum\end{tabular}& $p\in[1,\infty]$ &No  & \Cref{prop_max} &\Cref{prop_max}\\ \hline\hline
\multirow{4}{*}{\rotatebox[origin=c]{270}{Discrete}}  & $\Xi=\Re^{m_1}\times \{0,1\}^{m_2}$ & General& $p=\infty$ &Yes  & \Cref{D_thm_tractable1} &\Cref{D_thm_tractable1_deterministic}  \\ \cline{2-7} 
& $\Xi=\{0,1\}^{m_1}\times \Re^{m_2}$ & General& $p=\infty$ &Yes  & \Cref{D_thm_tractable1} &\Cref{D_thm_tractable1_deterministic}  \\ \cline{2-7} 
& $\Xi=\{0,1\}^{m_1}\times \{\bxi_T\}$ & \begin{tabular}[c]{@{}c@{}}Objective\\Uncertainty\end{tabular}& $p\in [1,\infty)$ &Yes  & \Cref{D_thm_tractable1_AC} &\Cref{D_thm_tractable1_deterministic_AC} \\ \cline{2-7} 
& $\Xi=\{\bxi_q\}\times \{0,1\}^{m_2}$ & \begin{tabular}[c]{@{}c@{}}Constraint\\Uncertainty\end{tabular}& $p\in[1,\infty)$ &Yes  & \Cref{D_thm_approximation1}  &\Cref{D_thm_approximation1_deterministic}\\ \cline{2-7} 
& $\Xi=\{\bxi_q\}\times \{0,1\}^{m_2}$ & \begin{tabular}[c]{@{}c@{}}Piecewise\\Maximum\end{tabular}& $p\in [1,\infty)$ &No  & \Cref{D_prop_max} &\Cref{D_prop_max}\\\hline
\end{tabular}
\end{table}

Some additional remarks are provided below. If the random parameters in the objective and  constraint system have very different magnitudes, it is better to \textbf{normalize} the empirical data to avoid numerical issues. It is always good to incorporate \textbf{support} information of continuous random parameters into the formulations. In general, incorporating support into the reformulation in  \Cref{sec_cont} and \Cref{sec_bin} can destroy the tractability results. However, in practice, readers are highly recommended to explore support information and reduce the conservatism of DRTSP models.

\section{Numerical Illustration}\label{sec_sep_numerical}

In this section, we present a numerical study to demonstrate the effectiveness of the proposed formulations and also show how to use cross-validation to choose a proper Wasserstein radius $\delta$. 

For the demonstration purpose, we studied two models, i.e, Model \eqref{DR-FRLP1} and Model \eqref{DR-FRLP4} from \Cref{example1} and \Cref{example4}, respectively. We used normalized 49-node instances provided in \cite{Cui10}, and thus in these two models, $\ell=n_1=49$. The fixed cost and coordinates of candidate locations can be found at the following link \allowbreak\url{https://drive.google.com/file/d/11-oc9xX2-tTlSxkNuZhZ-qZlo7xQq80J/view?usp=sharing}. We assumed that disruption happens independently and each location has a probability of $p\in \{0.01,0.05\}$ to be disrupted, i.e., $\Pr\{\tilde{\delta}_i=0\}=p$ and $\Pr\{\tilde{\delta}_i=1\}=1-p$. To ensure the consistency between random vectors $\tilde{\bm\delta}$ and $\tilde{\bm d}$, we normalized $\tilde{\bm d}$ such that for each $t\in [\ell]$ follows i.i.d uniform distribution in the range between 0.05 and 1.0. We also computed the unit transportation cost $c_{ts}=100\times \text{Euclidean distance between locations $t\in [\ell]$ and $j\in[n_1]$}$. Finally, for the emergency facility (i.e., dummy facility), we assumed that its unit transportation cost is $M=10,000$.

To test these two models, we generate $N=100$ samples of $(\tilde{\bm\delta},\tilde{\bm d})$, where the computational results are displayed in Table~\ref{tab:my-table}. In Table~\ref{tab:my-table}, the Wasserstein radius $\theta$ varies from $0$ to $0.18$, where $\theta=0$, both models are reduced to their sampling average approximation counterpart (SAA model) and for each model, we use Opt.Val., Time, and Built Facilities to denote optimal values, computational time, and built facilities output by the model, respectively. To evaluate the robustness of the solution and choose a proper Wasserstein radius, we generated 100 additional samples, evaluated their corresponding objective function values, and computed the 95\% confidence intervals of their mean values, which are displayed in the columns titled ``Confidence Interval". All the tested instances were executed on a MacBook Pro with a 2.80 GHz processor and 16GB RAM with a call of the commercial solver Gurobi (version 7.5, with default settings).

From Table~\ref{tab:my-table}, we see that all the instances can be solved to the optimality within 1 minute, where Model \eqref{DR-FRLP4} takes a slightly shorter time. We see that when $\theta=0$, the SAA model underestimates the costs, where the underestimation mainly comes from the expected transportation costs (i.e., wait-and-see costs). When the Wasserstein radius $\theta$ increases, the total costs of both Model \eqref{DR-FRLP1} and Model \eqref{DR-FRLP4} increase. However, it is seen that for the same $\theta>0$, the total cost of Model \eqref{DR-FRLP4} is significantly smaller than that of Model \eqref{DR-FRLP1}. This demonstrates that exploring support information of random parameters can help reduce the risk of distributional uncertainty. In addition, we also see that the set of built facilities of Model \eqref{DR-FRLP4} does not change when $\theta$ grows to $0.16$. This demonstrates that the first-stage results from SAA can be robust. When the probability of disruptions $p$ increases from $0.01$ to $0.05$, we see that Model \eqref{DR-FRLP1} does not allow to build any facility due to disruptions when $\theta>0$, while Model \eqref{DR-FRLP4} still works and finds appropriate facility locations. This further demonstrates the less conservatism of Model \eqref{DR-FRLP4}.

To choose a proper Wasserstein radius, we suggest to select the smallest $\theta$ such that its corresponding total cost is beyond the confidence interval. For example, when $p=0.01$, the best Wasserstein radii of Model \eqref{DR-FRLP1} and Model \eqref{DR-FRLP4} are $\theta=0.02$, while when $p=0.05$, the best Wasserstein radius of Model \eqref{DR-FRLP4} are $\theta=0.06$.

\begin{table}[htbp]
\centering
	\setlength{\tabcolsep}{3.0pt}
\renewcommand{\arraystretch}{1.5}
\caption{Numerical results of Model \eqref{DR-FRLP1} and Model \eqref{DR-FRLP4} from \Cref{example1} and \Cref{example4}, where $N=100,\ell=n_1=49$.}
\label{tab:my-table}
\scriptsize
\begin{center}
\begin{threeparttable}
\begin{tabular}{|r|r|r|r|l|r|r|r|l|r|}
\hline
\multirow{2}{*}{$p$}&\multirow{2}{*}{$\theta$} & \multicolumn{4}{c|}{Model \eqref{DR-FRLP1}} & \multicolumn{4}{c|}{Model \eqref{DR-FRLP4}} \\ \cline{3-10} 
& & Opt.Val. & Time & Built Facilities & Confidence Interval & Opt.Val. & Time & Built Facilities & Confidence Interval \\ \hline
\multirow{10}{*}{0.01} & 0.00 & 7288.04 & 7.78 & {[}4, 25, 31, 35, 45{]} & {[}7232.33, 7379.25{]} & 7288.04 & 7.74 & \multirow{9}{*}{{[}4, 25, 31, 35, 45{]}} & \multirow{9}{*}{{[}7232.33, 7379.25{]}} \\ \cline{2-8}
 & 0.02 & 7998.84 & 10.72 & \multirow{2}{*}{{[}13, 16, 25, 31{]}} & \multirow{2}{*}{{[}7641.80, 7862.95{]}} & 7453.31 & 7.07 &  &  \\ \cline{2-4} \cline{7-8}
 & 0.04 & 8344.45 & 18.82 &  &  & 7618.58 & 7.59 &  &  \\ \cline{2-8}
 & 0.06 & 8673.85 & 17.64 & \multirow{2}{*}{{[}13, 16, 21, 30, 31{]}} & \multirow{2}{*}{{[}7748.27, 7963.70{]}} & 7783.85 & 8.37 &  &  \\ \cline{2-4} \cline{7-8}
 & 0.08 & 8995.13 & 24.35 &  &  & 7949.12 & 7.26 &  &  \\ \cline{2-8}
 & 0.10 & 9295.40 & 41.71 & \multirow{5}{*}{{[}13, 16, 19, 21, 30, 31{]}} & \multirow{5}{*}{{[}7986.44, 8123.68{]}} & 8113.81 & 13.71 &  &  \\ \cline{2-4} \cline{7-8}
 & 0.12 & 9568.05 & 30.45 &  &  & 8279.66 & 12.70 &  &  \\ \cline{2-4} \cline{7-8}
 & 0.14 & 9846.47 & 30.16 &  &  & 8444.93 & 13.38 &  &  \\ \cline{2-4} \cline{7-8}
 & 0.16 & 10130.65 & 30.44 &  &  & 8610.20 & 15.21 &  &  \\ \cline{2-4} \cline{7-10} 
 & 0.18 & 10420.59 & 30.60 &  &  & 8769.09 & 14.53 & {[}4, 21, 30, 31, 35, 45{]} & {[}7335.91, 7470.26{]} \\ \hline\hline
\multirow{10}{*}{0.05} & 0.00 & 7498.95 & 9.75 & {[}4, 25, 31, 35, 45{]} & {[}7550.51, 7946.76{]} & 7498.95 & 9.90 & \multirow{9}{*}{{[}4, 25, 31, 35, 45{]}} & \multirow{9}{*}{{[}7550.51, 7946.76{]}} \\ \cline{2-8}
 & 0.02 & ----\tnote{1} & 0.78 & {[}{]} & ---- & 7672.22 & 8.57 &  &  \\ \cline{2-8}
 & 0.04 & ---- & 0.81 & {[}{]} & ---- & 7845.49 & 8.04 &  &  \\ \cline{2-8}
 & 0.06 & ---- & 0.78 & {[}{]} & ---- & 8018.76 & 16.38 &  &  \\ \cline{2-8}
 & 0.08 & ---- & 1.34 & {[}{]} & ---- & 8192.03 & 18.62 &  &  \\ \cline{2-8}
 & 0.10 & ---- & 1.33 & {[}{]} & ---- & 8365.30 & 19.51 &  &  \\ \cline{2-8}
 & 0.12 & ---- & 1.29 & {[}{]} & ---- & 8538.57 & 19.77 &  &  \\ \cline{2-8}
 & 0.14 & ---- & 1.30 & {[}{]} & ---- & 8711.84 & 21.04 &  &  \\ \cline{2-8}
 & 0.16 & ---- & 0.85 & {[}{]} & ---- & 8885.11 & 15.68 &  &  \\ \cline{2-10} 
 & 0.18 & ---- & 0.76 & {[}{]} & ---- & 9052.87 & 19.61 & {[}4, 21, 30, 31, 35, 45{]} & {[}7547.08, 7864.01{]} \\ \hline
\end{tabular}
\begin{tablenotes}
\item[1] ---- means that all the customers will be served by  the emergency facility.
\end{tablenotes}
\end{threeparttable}
\end{center}
\end{table}

\section{Conclusion}\label{sec_conclusion}
This paper studies a distributionally robust two-stage stochastic program (DRTSP) with $\infty-$Wasserstein ambiguity set. We provide sufficient conditions under which the worst-case expected wait-and-see cost of DRTSP can be computed efficiently. By exploring the properties of binary random parameters, the proposed reformulation techniques are extended to DRTSP with binary uncertainty. The main results in this paper are projected into the same decision space as conventional two-stage stochastic programs and deliver straightforward interpretable results of robustness. The proposed tractable results are shown to be sharp through complexity analysis. One possible future direction is that one might extend the proposed reformulation techniques for distributionally robust multi-stage stochastic programs with $\infty-$Wasserstein ambiguity set and derive tractable and intractable results.

\section*{Acknowledgments}

The author would like to thank Dr. Zhi Chen (City University of Hong Kong) for his insightful comments about $\infty-$Wasserstein ambiguity set.

\bibliography{Reference}

%
%
%
%

\end{document}